\numberwithin{equation}{section}
\newcommand{\be}{\begin{eqnarray}}
\newcommand{\ee}{\end{eqnarray}}
\newcommand{\ce}{\begin{eqnarray*}}
\newcommand{\de}{\end{eqnarray*}}
\newtheorem{theorem}{Theorem}[section]
\newtheorem{remark}{Remark}[section]
\newtheorem{lemma}[theorem]{Lemma}
\newtheorem{definition}[theorem]{Definition}
\newtheorem{proposition}[theorem]{Proposition}
\newtheorem{Examples}[theorem]{Example}
\newtheorem{corollary}[theorem]{Corollary}
\newenvironment{proof of theorem 1.4}{{\it Proof of Theorem 1.4}.}{{\hfill 	
		$\square$\hskip - \parfillskip}}
\newenvironment{proof of theorem 1.5}{{\it Proof of Theorem 1.5}.}{{\hfill 	
		$\square$\hskip - \parfillskip}}
\newenvironment{proof of theorem 1.6}{{\it Proof of Theorem 1.6}.}{{\hfill 	
		$\square$\hskip - \parfillskip}}
\newcommand{\Rmnum}[1]{\expandafter\@slowromancap\romannumeral #1@}
\def\p{\partial}
\def\g{\gamma}
\def\[{{\Big[}}
\def\]{{\Big]}}
\def\<{{\langle}}
\def\>{{\rangle}}
\def\({{\Big(}}
\def\){{\Big)}}
\def\bx{{\mathbf{x}}}
\def\={&\!\!=\!\!&}
\def\1{{\mathbf{1}}}
\def\geq{\geqslant}
\def\leq{\leqslant}
\def\le{\leqslant}
\def\k{\kappa}
\def\p{\partial}
\def\g{\gamma}
\def\[{{\Big[}}
\def\]{{\Big]}}
\def\<{{\langle}}
\def\>{{\rangle}}
\def\({{\Big(}}
\def\){{\Big)}}
\def\bx{{\mathbf{x}}}
\def\W{{\mathcal W}}
\def\C{\mathcal{C}}
\def\K{\mathcal{K}}
\def\={&\!\!=\!\!&}
\def\bt{\begin{theorem}}
\def\et{\end{theorem}}
\def\bl{\begin{lemma}}
\def\el{\end{lemma}}
\def\br{\begin{remark}}
\def\er{\end{remark}}
\def\bx{\begin{Examples}}
\def\ex{\end{Examples}}
\def\bd{\begin{definition}}
\def\ed{\end{definition}}
\def\bp{\begin{proposition}}
\def\ep{\end{proposition}}
\def\bc{\begin{corollary}}
\def\ec{\end{corollary}}
\def\geq{\geqslant}
\def\leq{\leqslant}
\def\le{\leqslant}
 \def\R{\mathbb R}
 \def\R{\mathbb R}    
  \def\m{{\bf m}}
\def\<{\langle} \def\>{\rangle}
\def\bpf{\begin{proof}}
\def\epf{\end{proof}}
	\tikzset{
	pattern size/.store in=\mcSize,
	pattern size = 5pt,
	pattern thickness/.store in=\mcThickness,
	pattern thickness = 0.3pt,
	pattern radius/.store in=\mcRadius,
	pattern radius = 1pt}
\pgfpoint{\mcSize}{\mcSize}}
\tikzset{every picture/.style={line width=0.75pt}} 
\begin{document}
\title{Alexandrov-Fenchel inequalities for convex anisotropic capillary hypersurfaces in the half-space}\thanks{\it {The research is partially supported by NSFC (Nos. 11871053 and 12261105).}}
\author{Jinyu Gao and Guanghan Li}

\thanks{{\it 2020 Mathematics Subject Classification. 52A39, 52A40, 53C24, 58J50.}}
\thanks{{\it Keywords: anisotropic capillary convex bodies, Alexandrov-Fenchel inequality, mixed volumes}}
\thanks{Email address: 
	 jinyugao@whu.edu.cn; ghli@whu.edu.cn}

\address{Department of Mathematics,
	Zhejiang Sci-Tech University,
	Hangzhou 310018, China}
\address{School of Mathematics and Statistics, Wuhan University, Wuhan 430072, China.
}

\begin{abstract}
 In this paper, the results of Mei, Wang, Weng and Xia \cite{Xia-arxiv} on capillary convex bodies are extended to the anisotropic setting.
 We develop a theory for anisotropic capillary convex bodies in the half-space and establish a general Alexandrov-Fenchel inequality for mixed volumes of anisotropic capillary convex bodies. Thus, this weakens the conditions of the inequality in \cite{Ding-Gao-Li-arxiv}*{Theorem 1.4} and extends it to a more general case.
\end{abstract}

\maketitle
\setcounter{tocdepth}{2}
\tableofcontents

\section{Introduction}
In  \cite{Young}, Young  formulates the equilibrium condition for the contact angle of a capillarity surface commonly known as Young’s law. We know that Euclidean capillary convex bodies in the upper half-space characterize the equilibrium states of liquid droplets contacting a hyperplane, while anisotropic capillary convex bodies may describe stable configurations of droplets with non-uniform density adhering to the hyperplane.
For comprehensive treatments of isotropic or anisotropic capillarity, we refer to \cite{Finn,Philippis}.

We consider the properly, embedded, smooth compact  hypersurface $\Sigma$ in the closure $\overline{\mathbb{R}^{n+1}_+}$ of half-space ${\mathbb{R}_+^{n+1}}=\{x\in \mathbb{R}^{n+1} :\<x,E_{n+1}\>> 0 \}$ with $\mathrm{int}(\Sigma)\subset \mathbb{R}_+^{n+1}$ and $\partial\Sigma\subset\partial\overline{\mathbb{R}_+^{n+1}}$, where $E_{n+1}$ denotes the $(n+1)$-coordinate unit vector. $\Sigma$
 is called a $\theta$-capillary hypersurface if $\Sigma$ intersects $\partial\overline{\mathbb{R}^{n+1}_+}$ at a constant contact angle $\theta \in (0,\pi)$. If $\Sigma$ is an anisotropic capillary hypersurface (as defined in \cite{Jia-Wang-Xia-Zhang2023} or below), the contact angle on the boundary is typically non-constant.

 To describe the anisotropic capillary hypersurface, we let $\mathcal{W}\subset \mathbb{R}^{n+1}$ be a given smooth closed strictly convex hypersurface containing the origin $O$ with a support function $F:\mathbb{S}^{n}\rightarrow\mathbb{R}_+$. The Cahn-Hoffman map associated with $F$ is given by
 \begin{align}\label{equ:Psi}
 \Psi:\mathbb{S}^n\rightarrow\mathbb{R}^{n+1},\quad	\Psi(x)=F(x)x+\nabla^{\mathbb{S}} {F}(x),
 \end{align}
where $\nabla^{\mathbb{S}}$ denotes the covariant derivative on $\mathbb{S}^n$ with standard round metric. $\mathcal{W}$ is called a Wulff shape with support function $F$. Let $\Sigma$ be a smooth compact orientable hypersurface in $\overline{\mathbb{R}_+^{n+1}}$ with boundary $\partial\Sigma\subset\partial\overline{\mathbb{R}_+^{n+1}}$, which encloses a bounded domain $\widehat{\Sigma}$. Let $\nu$ be the unit normal  of $\Sigma$ pointing outward of  $\widehat{\Sigma}$. Given a constant  $\omega_0 \in(-F(E_{n+1}), F(-E_{n+1}))$, we say $\Sigma$ is an \textbf{anisotropic $\omega_0$-capillary hypersurface} (see \cite{Jia-Wang-Xia-Zhang2023}) if
\begin{align}
	\label{equ:w0-capillary}
	\<\Psi(\nu),-E_{n+1}\>=\omega_0,\quad \text{on}\ \partial\Sigma.
\end{align}
If additionally, $\Sigma$ is convex, we call $\widehat{\Sigma}$ the \textbf{anisotropic $\omega_0$-capillary convex body} associated with $\Sigma$.

For the convenience of description, we need a constant vector $E_{n+1}^F \in \mathbb{R}^{n+1}$ (ref. \cite{Jia-Wang-Xia-Zhang2023,Xia-arxiv}) defined as
 $$
 E_{n+1}^F= \begin{cases}\frac{\Psi\left(E_{n+1}\right)}{F\left(E_{n+1}\right)}, & \text { if } \omega_0<0, \\[5pt]
 	E_{n+1},  & \text { if } \omega_0=0,
 	\\[5pt]
  -\frac{\Psi\left(-E_{n+1}\right)}{F\left(-E_{n+1}\right)}, & \text { if } \omega_0>0.\end{cases}
 $$
 Note that $E_{n+1}^F$ is the unique vector in the direction $\Psi\left(E_{n+1}\right)$, whose scalar product with $E_{n+1}$ is 1 (see \cite{Jia-Wang-Xia-Zhang2023}*{eq. (3.2)}).

 The model example of anisotropic $\omega_0$-capillary hypersurface is the $\omega_0$-capillary Wulff shape $\W_{r_0,\omega_0}$ (see \cite{Jia-Wang-Xia-Zhang2023}) defined by \eqref{equ:crwo}, which is a part of a Wulff shape in $\overline{\mathbb{R}^{n+1}_+}$ such that the anisotropic capillary boundary condition \eqref{equ:w0-capillary} holds. 
 We denote $\mathcal{C}_{\omega_0}:=\W_{1,\omega_0}=\mathcal{T}(\W\cap\{x\in\mathbb{R}^{n+1}:\<x,E_{n+1}\>\geq -\omega_0\})$, where
 \begin{align}
 	\mathcal{T}:x\rightarrow x+\omega_0E_{n+1}^F,\quad
 \text{for}\  x\in \mathbb{R}^{n+1} ,
\label{equ:T=}
 \end{align}
  is a translation transformation.

 Let $(\Sigma,g)$ be an  anisotropic $\omega_0$-capillary hypersurface in $\overline{\mathbb{R}^{n+1}_+}$ with boundary supported on $\partial\overline{\mathbb{R}^{n+1}_+}$, where metric $g$ is induced from Euclidean space $(\mathbb{R}^{n+1},\<\cdot,\cdot\>)$. In \cite{Ding-Gao-Li-arxiv}, we introduced the anisotropic capillary quermassintegrals as follows:
 \begin{align*}
 \mathcal{V}_{0,\omega_0}(\Sigma):=|\widehat{\Sigma}|,
 \quad	\mathcal{V}_{1,\omega_0}({\Sigma})=\frac{1}{n+1}\left(
 	|\Sigma|_F+\omega_0|\widehat{\partial\Sigma}|
 	\right),
 \end{align*}
 and for $1\leq k\leq n$,
 \begin{align*}
 	\mathcal{V}_{k+1,\omega_0}(\Sigma)=
 	\frac{1}{n+1}
 	\left(
 	\int_{\Sigma}H^F_kF(\nu)d\mu_g
 	+\frac{\omega_0}{n}
 	\int_{\partial\Sigma} H_{k-1}^{\bar{F}}\bar{F}(\bar{\nu})ds
 	\right),
 \end{align*}
 where $\widehat{\partial\Sigma}=\widehat{\Sigma}\cap \partial\overline{\mathbb{R}^{n+1}_+}$ denotes the domain enclosed  by $\partial\Sigma\subset\mathbb{R}^n=\partial\overline{\mathbb{R}^{n+1}_+}$,
 $|\cdot |$ denotes the volume of the domain in $ \mathbb{R}^n$ or $ \mathbb{R}^{n+1}$,
 $|\cdot |_F$ denotes the anisotropic area defined by \eqref{equ:F-area},
 $\bar{\nu}$  is the unit outward co-normal  of $\partial \Sigma\subset \mathbb{R}^n=\partial\overline{\mathbb{R}^{n+1}_+}$,
 $ds$ is the $(n-1)$-dimensional induced volume form of $\partial \Sigma$,
 $d\mu_g$ is the induced volume form of the metric $g$,
$\bar{F}$ is the support function of the new $(n-1)$-dimensional Wulff shape  $\overline{\W}=\partial\mathcal{C}_{\omega_0}$ in $\partial\overline{\mathbb{R}^{n+1}_+}=\mathbb{R}^n$,
$H_k^{{F}}(0\leq k\leq n,\,H_0^{{F}}=1,H^F_{n+1}=0)$ is the  normalized anisotropic $k$-mean curvature of $\Sigma\subset\mathbb{R}^{n+1}$ with respect to Wulff shape ${\W}$,
and $H_k^{\bar{F}}(0\leq k\leq n-1,\,H_0^{\bar{F}}=1)$ is the  normalized anisotropic $k$-mean curvature of $\partial\Sigma\subset\mathbb{R}^n$ with respect to Wulff shape $\overline{\W}$.

Specifically, if we take $\W$ to be the unit sphere, then $F = 1$, and the anisotropic case reduces to the classical isotropic geometry, and  the anisotropic capillary quermassintegrals
 matches the definition of  isotropic quermassintegrals \cite{Wang-Weng-Xia}*{eq.(1.8)} for  capillary hypersurfaces in the half-space.

 As shown in \cite{Ding-Gao-Li-arxiv}*{Theorem 6.2}, the first variation of $\mathcal{V}_{k+1,\omega_0}$ gives rise to $H^F_{k+1}$ with $-1\leq k\leq n$. Then in \cite{Ding-Gao-Li-arxiv}*{Theorem 1.4}, for convex anisotropic $\omega_0$-capillary hypersurface $\Sigma$, we  used the flow method to prove the following Alexandrov-Fenchel inequality for $l=0,1\leq k\leq n-1$ under certain conditions  (\cite{Ding-Gao-Li-arxiv}*{Condition 1.1}) on the given  Wulff shape $\W$ and constant $\omega_0$,
 \begin{align}\label{equ:A-F-lk}
 	\left(\frac{\mathcal{V}_{k, \omega_0}({\Sigma})}{|\widehat{\mathcal{C}}_{\omega_0}|}\right) ^{\frac{1}{n+1-k}}
 	\geq
 	\left(\frac{\mathcal{V}_{l, {\omega_0}}({\Sigma})}{|\widehat{\mathcal{C}}_{\omega_0}|}\right)^{\frac{1}{n+1-l}}, \quad 0\leq l \leq k\leq  n,
 \end{align}
 with equality holding if and only if $\Sigma$ is an $\omega_0$-capillary Wulff shape. Here $\widehat{\mathcal{C}}_{\omega_0}$  is the anisotropic $\omega_0$-capillary convex body  determined by $\mathcal{C}_{\omega_0}$. In \cite{Ding-Gao-Li-arxiv}*{Remark 1.1 (2)}, we remark that, if $\W=\mathbb{S}^n$ (i.e., the isotropic setting), the Condition 1.1 in \cite{Ding-Gao-Li-arxiv} is equivalent to constant contact angle $\theta\in \left(0,\frac{\pi}{2}\right]$.

 It is natural to consider whether the Condition 1.1 in \cite{Ding-Gao-Li-arxiv} can be removed and whether a more general range of $k$ and $l$, beyond $l=0,1\leq k\leq n-1$, can be addressed.

 In \cite{Xia-arxiv}, Mei, Wang, Weng and Xia proved the more general  Alexandrov-Fenchel inequality for mixed volumes of capillary convex bodies
 \begin{align}\label{equ:V>vv}
 	V^2\left(K_1, K_2, K_3, \cdots, K_{n+1}\right) \geq V\left(K_1, K_1, K_3, \cdots, K_{n+1}\right) V\left(K_2, K_2, K_3, \cdots, K_{n+1}\right),
 \end{align}
 where $V\left(K_1, K_2, \cdots, K_{n+1}\right)$ is the so-called mixed volume of
  a family of $\theta$-capillary convex bodies $K_i \subset \overline{\mathbb{R}_+^{n+1}}, i=1, \cdots, n+1$.
  And it is demonstrated that  the capillary quermassintegrals have alternative integral expressions which match the definition of mixed volumes. Furthermore, the Alexandrov-Fenchel inequality for quermassintegrals of $\theta$-capillary hypersurface with $\theta\in(0,\pi)$ is derived, and thus  this removes the restriction condition $\theta\in(0,\frac{\pi}{2}]$ under which the inequality was proven by flow method in \cite{Wang-Weng-Xia}.

  We remark that for the case of classical Euclidean convex body geometry in $\mathbb{R}^{n+1}$, 
  if we replace the Euclidean geometric quantities in the integral form of the mixed volume $V(K_1,\cdots,K_{n+1})$ with anisotropic geometric quantities, the value of the mixed volume remains unchanged according to \eqref{equ:Q(AB)=Q(A)det(B)}, \eqref{equ:S_F}, and \eqref{u-hatu}.
   Therefore, there is no necessity to ``generalize" the inequality \eqref{equ:V>vv} from classical closed hypersurfaces in $\mathbb{R}^{n+1}$ (Euclidean framework) to the anisotropic case for closed hypersurfaces in $\mathbb{R}^{n+1}$. However, the situation differs for capillary convex bodies with boundaries: the Gauss map image of an anisotropic capillary convex hypersurface typically does not form a spherical cap but corresponds to irregular regions on the sphere. When transforming the mixed volume into an integral over spherical regions, only special cases can be reduced to the Euclidean capillary boundary scenario. Since the  inequality \eqref{equ:V>vv} for capillary convex bodies  cannot encompass the anisotropic capillary case, it becomes essential to extend inequality \eqref{equ:V>vv} for capillary convex bodies to the anisotropic capillary convex body framework.

 Inspired by \cite{Xia-arxiv}, we also need to  establish a theory for anisotropic capillary convex bodies in half-space. However, due to the asymmetry of the anisotropic Weingarten matrix \eqref{equ:S_F} and the complexity of anisotropic geometry, some challenges arise when extending Proposition 2.9 and Lemma 2.14 (then Proposition 2.15) in \cite{Xia-arxiv}.

Fortunately, in \cite{Ding-Gao-Li-arxiv}*{Lemma 7.4 (i)}, we established an alternative integral expression for the anisotropic capillary quermassintegrals $\mathcal{V}_{k+1,\omega_0}$ for $0\leq k\leq n$:
 \begin{align*}
 	\mathcal{V}_{k+1,\omega_0}(\Sigma)=\frac{1}{n+1}\int_{\Sigma}H_k^F\(F(\nu)+\omega_0\<\nu,E_{n+1}^F\>\) d\mu_g,
 \end{align*}
 then it can be proved that the  anisotropic capillary quermassintegrals match the mixed volume for anisotropic capillary convex bodies (see Lemma \ref{lemma:V-k=V(KKK,LLL)}), which is an extension of \cite{Xia-arxiv}*{Proposition 2.15}.
 Therefore, the main challenge of this paper lies in proving Lemma \ref{lemma:V12=V21}.

Similar to \cite{Xia-arxiv}, we define the anisotropic capillary Gauss map by $\widetilde{\nu_{{F}}}=\mathcal{T}\nu_{{F}}=\nu_{{F}}+\omega_0E_{n+1}^F$ and we use it to reparametrize anisotropic  support function $\hat{u}$ (see \eqref{u-hatu}) by $\hat{s}=\hat{ u}\circ \widetilde{\nu_{{F}}}^{-1}$ as a function on $\mathcal{C}_{\omega_0}$, which we called the \textbf{anisotropic capillary support function}, where $\nu_{{F}}$ and $\hat{ u}$ are respectively the anisotropic Gauss map and anisotropic support function defined in Section \ref{sec 2}. 
 Let  $\mathcal{K}_{\omega_0}$ denote the collection of all anisotropic $\omega_0$-capillary convex bodies. We prove the following anisotropic capillary version of Alexandrov-Fenchel inequality with rigidity characterization, which extends Theorem 1.1 in \cite{Xia-arxiv}.

\begin{theorem}\label{thm:A-F-convex}
Given a Wulff shape $\W$ (or a Minkowski norm $F$) and a constant
  $\omega_0 \in\left(-F\left(E_{n+1}\right), F\left(-E_{n+1}\right)\right)$. Let $\widehat{\Sigma}_i \in \mathcal{K}_{\omega_0}$ for $1 \leq i \leq n+1$. Then
  \begin{align*}
  	V^2\left(\widehat{\Sigma}_1, \widehat{\Sigma}_2, \widehat{\Sigma}_3, \cdots, \widehat{\Sigma}_{n+1}\right) \geq V\left(\widehat{\Sigma}_1, \widehat{\Sigma}_1, \widehat{\Sigma}_3, \cdots, \widehat{\Sigma}_{n+1}\right) V\left(\widehat{\Sigma}_2, \widehat{\Sigma}_2, \widehat{\Sigma}_3, \cdots, \widehat{\Sigma}_{n+1}\right).
  \end{align*}
  Equality holds if and only if the anisotropic capillary support functions $\hat{s}_j$ of $\widehat{\Sigma}_j, j=1,2$, satisfy
  \begin{align}\label{equ:s=as+...}
  	\hat{s}_1=a \hat{s}_2+\sum_{i=1}^n a_i\frac{\<\nu, E_i\>}{F(\nu)}, \quad \text { on } \quad \mathcal{C}_{\omega_0},
  \end{align}
  for some constants $a>0, a_i \in \mathbb{R}, i=1, \cdots, n$, and $\left\{E_i\right\}_{i=1}^n$ the horizontal coordinate unit vectors of $\overline{\mathbb{R}_{+}^{n+1}}$. 
 Here $\nu(\xi)$ is the unit  normal of $\xi\in\mathcal{C}_{\omega_0}$ pointing out of $\widehat{\mathcal{C}}_{\omega_0}$. 
\end{theorem}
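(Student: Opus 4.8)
The plan is to mirror the strategy of Mei--Wang--Weng--Xia \cite{Xia-arxiv} for the Euclidean capillary case, working through the anisotropic capillary support functions $\hat s_i$ on the fixed reference body $\mathcal{C}_{\omega_0}$. The first step is to record, from Lemma~\ref{lemma:V-k=V(KKK,LLL)} (the extension of \cite{Xia-arxiv}*{Proposition 2.15}) and the preceding structural results, that the mixed volume $V(\widehat\Sigma_1,\dots,\widehat\Sigma_{n+1})$ admits an integral representation over $\mathcal{C}_{\omega_0}$ that is linear in each argument and, crucially, \emph{symmetric} in its entries (this is exactly the content whose proof ``is the main challenge of this paper'', Lemma~\ref{lemma:V12=V21}); I would take these as given. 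Writing $\hat s_i$ for the anisotropic capillary support function of $\widehat\Sigma_i$, the quantities $V(\widehat\Sigma_1,\widehat\Sigma_1,\widehat\Sigma_3,\dots)$, $V(\widehat\Sigma_1,\widehat\Sigma_2,\widehat\Sigma_3,\dots)$, $V(\widehat\Sigma_2,\widehat\Sigma_2,\widehat\Sigma_3,\dots)$ become, after freezing $\widehat\Sigma_3,\dots,\widehat\Sigma_{n+1}$, three bilinear pairings of $\hat s_1,\hat s_2$ against a fixed ``mixed discriminant'' type operator built from the capillary Weingarten data of the frozen bodies.

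The second step is to reduce the inequality to a statement about a self-adjoint second-order elliptic operator. By polarization, the content of the theorem is that the quadratic form
\[
Q(\phi):=V(\phi,\phi,\widehat\Sigma_3,\dots,\widehat\Sigma_{n+1})
\]
on capillary support functions has Lorentzian (one-positive-direction) signature: $Q(\hat s_2)>0$ and, on the $Q$-orthogonal complement of $\hat s_2$, $Q\le 0$. I would realize $Q$ via integration by parts as $-\int_{\mathcal{C}_{\omega_0}} \phi\, L\phi$ for an operator $L$ of the form $L\phi = \operatorname{div}(A\nabla\phi)+(\text{lower order})$, where $A$ is (the symmetrization of) the product of the anisotropic support matrices of $\widehat\Sigma_3,\dots,\widehat\Sigma_{n+1}$ — positive definite by convexity — and the natural boundary term along $\partial\mathcal{C}_{\omega_0}$ produced by the capillary angle vanishes, precisely because each $\hat s_i$ satisfies the capillary boundary condition encoded in the definition of $\mathcal{K}_{\omega_0}$. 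Then the classical Alexandrov argument applies: one computes the kernel of $L$ (together with the Neumann-type capillary boundary condition) and runs the eigenvalue/connectedness argument, using that $\hat s_2$ itself lies in a one-dimensional positive cone for $Q$, to conclude the reverse inequality with the correct sign.

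The third step is the equality analysis. Equality forces $\hat s_1 - a\hat s_2 \in \ker L$ with the capillary boundary condition, for the appropriate $a = V(\widehat\Sigma_1,\widehat\Sigma_2,\dots)/V(\widehat\Sigma_2,\widehat\Sigma_2,\dots)>0$. The task is then to identify this kernel: I expect it to consist exactly of the restrictions to $\mathcal{C}_{\omega_0}$ of linear functions that are ``horizontal'' in the capillary sense, which on $\mathcal{C}_{\omega_0}$ are represented by $\xi\mapsto \<\nu(\xi),E_i\>/F(\nu(\xi))$ for $i=1,\dots,n$ — these are the support-function analogues of translations parallel to $\partial\overline{\mathbb{R}^{n+1}_+}$, the only translations preserving the capillary constraint. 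Matching this against the statement gives \eqref{equ:s=as+...}. Conversely one checks directly that any $\hat s_1$ of that form yields a genuine anisotropic $\omega_0$-capillary convex body (it is a translate of a dilate, which is why $a>0$ is required and no vertical component is allowed) and produces equality, using symmetry and linearity of $V$.

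The main obstacle I anticipate is \emph{not} the Lorentzian-signature argument itself — that is structurally identical to Alexandrov's — but rather two anisotropic subtleties: first, that the operator $L$ is a priori built from a \emph{non-symmetric} anisotropic Weingarten matrix \eqref{equ:S_F}, so one must verify that only its symmetric part enters $Q$ and that this symmetric part is still positive definite and yields a self-adjoint $L$ with the right boundary behavior; and second, the precise identification of $\ker L$ under the capillary (Robin/Neumann-type) boundary condition on the irregular region $\mathcal{C}_{\omega_0}\subset\mathbb{S}^n$, since — as noted in the introduction — the Gauss image is not a spherical cap and one cannot simply quote the spherical-cap computations of \cite{Xia-arxiv}. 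Handling these two points is where the bulk of the work, leaning on Lemma~\ref{lemma:V12=V21} and the anisotropic capillary support function machinery of Section~\ref{sec 2}, will go.
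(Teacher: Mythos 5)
Your proposal follows essentially the same route as the paper: the theorem is reduced, via the symmetry and multilinearity of $V$ (Lemma \ref{lemma:V12=V21}) and the integral representation of mixed volumes (Proposition \ref{prop7.3}), to a functional inequality for the bilinear pairing of capillary support functions against a self-adjoint, uniformly elliptic operator with a Robin boundary condition, whose positive eigenspace is shown to be one-dimensional (the hyperbolicity criterion of Shenfeld--van Handel, Lemma \ref{lemma:3.1}); the equality case is settled by identifying the kernel of $\tau[\cdot]$ under the capillary boundary condition with the horizontal linear functions, exactly as in Lemma \ref{lemma:ker-A}.

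Two small corrections. First, the boundary term produced by integration by parts does \emph{not} vanish under the capillary (Robin) condition \eqref{robin}; it survives as $\frac{1}{n+1}\int_{\partial\mathcal{C}_{\omega_0}} f_0 f_1 \varphi\,\mathcal{Q}^{nn}\frac{c(\xi)\omega_0}{F(\nu)\langle\mu,E_{n+1}\rangle}\,d\mathring{s}$, which is merely \emph{symmetric} in $f_0,f_1$ --- that symmetry, not vanishing, is what gives self-adjointness. Second, the phrase ``the classical Alexandrov argument applies'' hides the one step that actually does the work in the spectral analysis: ruling out eigenvalues in $(0,1)$. The paper does this not by a continuity/connectedness deformation but by Alexandrov's mixed discriminant inequality, which yields $\langle\mathcal{A}g,\mathcal{A}g\rangle_{L^2(\omega)}\geq\langle g,\mathcal{A}g\rangle_{L^2(\omega)}$ and hence $\lambda^2\geq\lambda$ for every eigenvalue; combined with the Krein--Rutman simplicity of the first eigenvalue (attained by $f_2>0$ with $\lambda_1=1$), this pins the positive eigenspace to the span of $f_2$. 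The same inequality's equality case is also what drives the rigidity analysis ($A[\tilde f]=cA[f_2]$, then $c=0$), a step your outline skips before invoking the kernel lemma. Neither point invalidates your plan, but both should be made explicit.
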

 We remark that, for any $\widehat\Sigma\in\mathcal{K}_{\omega_0}$, it holds that $\widetilde{\nu_F}(\Sigma)=\mathcal{C}_{\omega_0}$ by Lemma \ref{Lemma:gaussmap}.  
  Denote by $\xi=\widetilde{\nu_F}(X)$ for $X\in\Sigma$, it is easy to check that $T_X\Sigma=T_{\xi}\mathcal{C}_{\omega_0}$, then the unit outward normals  of $\Sigma$ and $\mathcal{C}_{\omega_0}$ coincide at the corresponding points. Therefore, we use $\nu $ to denote the unit outer normal of $\Sigma$ or $\mathcal{C}_{\omega_0}$, depending on the context. Similarly, the unit outer co-normal vectors (of $\partial\Sigma$ in $\Sigma$) $\mu$ are treated in the same manner.

As a consequence of Theorem \ref{thm:A-F-convex}, we derive:
\begin{theorem}\label{thm:A-F-k}
	Given a Wulff shape $\W$ (or a Minkowski norm $F$) and a constant
	$\omega_0 \in\left(-F\left(E_{n+1}\right), F\left(-E_{n+1}\right)\right)$.
	For any anisotropic $\omega_0$-capillary convex body $\widehat{\Sigma} \in \mathcal{K}_{\omega_0}$, \eqref{equ:A-F-lk} is true, with equality holding if and only if $\Sigma$ is an $\omega_0$-capillary Wulff shape.
\end{theorem}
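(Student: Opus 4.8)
The plan is to derive Theorem \ref{thm:A-F-k} as a direct consequence of Theorem \ref{thm:A-F-convex} by a suitable choice of the convex bodies $\widehat{\Sigma}_i$, exactly mirroring the classical deduction of the Alexandrov–Fenchel quermassintegral inequalities from the general mixed-volume Alexandrov–Fenchel inequality. First I would invoke Lemma \ref{lemma:V-k=V(KKK,LLL)} (the anisotropic extension of \cite{Xia-arxiv}*{Proposition 2.15}), which identifies the anisotropic capillary quermassintegral $\mathcal{V}_{k,\omega_0}(\Sigma)$ with the mixed volume $V(\,\underbrace{\widehat{\Sigma},\dots,\widehat{\Sigma}}_{n+1-k},\underbrace{\widehat{\mathcal{C}}_{\omega_0},\dots,\widehat{\mathcal{C}}_{\omega_0}}_{k})$. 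Then, setting $\widehat{\Sigma}_1=\widehat{\Sigma}$, $\widehat{\Sigma}_2=\widehat{\mathcal{C}}_{\omega_0}$, and $\widehat{\Sigma}_3=\cdots=\widehat{\Sigma}_{n+1}$ equal to a common body, the inequality of Theorem \ref{thm:A-F-convex} becomes a log-concavity statement for the one-parameter family of mixed volumes $V_j := V(\underbrace{\widehat{\Sigma},\dots,\widehat{\Sigma}}_{m-j},\underbrace{\widehat{\mathcal{C}}_{\omega_0},\dots,\widehat{\mathcal{C}}_{\omega_0}}_{j})$, namely $V_j^2\ge V_{j-1}V_{j+1}$.

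Next I would run the standard interpolation argument: from $V_j^2 \ge V_{j-1} V_{j+1}$ for all admissible $j$ one obtains, by telescoping the resulting inequalities $V_{j+1}/V_j \le V_j/V_{j-1}$, the chain of ratio inequalities and hence
\begin{align*}
\left(\frac{V_k}{V_n}\right)^{\frac{1}{n-k}} \ge \left(\frac{V_l}{V_n}\right)^{\frac{1}{n-l}}
\end{align*}
after normalizing by the appropriate power of $V_{n+1}=|\widehat{\mathcal{C}}_{\omega_0}|$. Translating back through $\mathcal{V}_{k,\omega_0}(\Sigma)=V_k$ and $\mathcal{V}_{n+1,\omega_0}$-type normalizations, and using that $V_{n+1}=|\widehat{\mathcal{C}}_{\omega_0}|$ together with the monotone behaviour encoded in Lemma \ref{lemma:V-k=V(KKK,LLL)}, yields precisely \eqref{equ:A-F-lk} in the stated range $0\le l\le k\le n$. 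I would be careful to check the boundary indices $k=n$ (where one of the bodies degenerates to $\widehat{\mathcal{C}}_{\omega_0}$) and $l=0$ (where $\mathcal{V}_{0,\omega_0}=|\widehat{\Sigma}|$), confirming the identifications remain valid there.

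For the equality case I would trace the rigidity characterization of Theorem \ref{thm:A-F-convex}: equality throughout the telescoped chain forces equality in every instance of $V_j^2=V_{j-1}V_{j+1}$, hence, with $\widehat{\Sigma}_2=\widehat{\mathcal{C}}_{\omega_0}$, the anisotropic capillary support functions must satisfy $\hat{s}=a\,\hat{s}_{\mathcal{C}_{\omega_0}}+\sum_{i=1}^n a_i\frac{\<\nu,E_i\>}{F(\nu)}$ on $\mathcal{C}_{\omega_0}$ for constants $a>0$, $a_i\in\mathbb{R}$. It then remains to show that this linear relation, combined with the anisotropic $\omega_0$-capillary boundary condition \eqref{equ:w0-capillary} that both $\widehat{\Sigma}$ and $\widehat{\mathcal{C}}_{\omega_0}$ satisfy, forces all $a_i=0$, so that $\hat s$ is a positive multiple of the support function of $\mathcal{C}_{\omega_0}$, i.e.\ $\Sigma$ is (a translate of) an $\omega_0$-capillary Wulff shape $\W_{r_0,\omega_0}$. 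I expect this last reduction — ruling out the horizontal-translation terms $\sum a_i \<\nu,E_i\>/F(\nu)$ using the capillary constraint — to be the main obstacle, since it requires understanding how these linear-in-$\nu$ functions interact with the constraint $\<\Psi(\nu),-E_{n+1}\>=\omega_0$ on $\partial\Sigma$; the argument should parallel the corresponding step in \cite{Xia-arxiv} but must account for the asymmetry of the anisotropic Weingarten operator and the non-constant contact angle.
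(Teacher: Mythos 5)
Your route is the same as the paper's: Theorem \ref{thm:A-F-convex} with $\widehat{\Sigma}_1=\widehat{\Sigma}$, $\widehat{\Sigma}_2=\widehat{\mathcal{C}}_{\omega_0}$ gives the log-concavity $V_j^2\ge V_{j-1}V_{j+1}$, Schneider's interpolation (the paper packages this as Corollary \ref{cor:4.3} with $k=m=n+1$) yields $V_{(j),\omega_0}^{n+1-i}\ge V_{(i),\omega_0}^{n+1-j}V_{(n+1),\omega_0}^{j-i}$, and Lemma \ref{lemma:V-k=V(KKK,LLL)} together with $V_{(n+1),\omega_0}=|\widehat{\mathcal{C}}_{\omega_0}|$ translates this into \eqref{equ:A-F-lk}. (Minor slip: the normalization is by $V_{n+1}$ with exponents $\frac{1}{n+1-k}$, not by $V_n$ with $\frac{1}{n-k}$ as written mid-proposal, but you correct this in the same sentence.)

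The one genuine problem is your treatment of the equality case. You identify as the ``main obstacle'' the task of showing that the horizontal terms $\sum_{i=1}^n a_i\frac{\<\nu,E_i\>}{F(\nu)}$ must vanish. This is neither necessary nor true: an $\omega_0$-capillary Wulff shape $\W_{r_0,\omega_0}(E)$ as defined in \eqref{equ:crwo} carries a free horizontal translation (any $E$ with $\<E,E_{n+1}\>=1$, i.e.\ $E-E_{n+1}^F$ horizontal, is allowed), and horizontal translations preserve the capillary condition \eqref{equ:w0-capillary} since $\nu$ is unchanged. By \eqref{equ:G(vF,Y)=<v,Y>/F}, translating a body by a horizontal vector $b=\sum_i a_iE_i$ adds exactly $\sum_i a_i\frac{\<\nu,E_i\>}{F(\nu)}$ to its anisotropic capillary support function, so the extremal relation $\hat{s}=a\,\hat{s}_o+\sum_i a_i\frac{\<\nu,E_i\>}{F(\nu)}$ says precisely that $\widehat{\Sigma}=a\,\widehat{\mathcal{C}}_{\omega_0}+b$, i.e.\ $\Sigma=\W_{a,\omega_0}(E)$ for a suitably shifted $E$. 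The correct final step is therefore to \emph{absorb} the $a_i$-terms into the translation freedom of the capillary Wulff shape, not to rule them out; attempting to prove $a_i=0$ would fail, since the extremal set genuinely contains all horizontal translates.
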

\begin{remark}
	If $\W=\mathbb{S}^n$, $\Sigma$ is a $\theta$-capillary hypersurface with $\omega_0=-\cos\theta$, and $\omega_0 \in\left(-F\left(E_{n+1}\right), F\left(-E_{n+1}\right)\right)$ is equivalent to  $\theta\in\left(0,\pi\right)$. Therefore, Theorem \ref{thm:A-F-convex} and \ref{thm:A-F-k} can be viewed as an extension of results in \cite{Xia-arxiv}.
\end{remark}

We mention that, there are numerous studies on the Alexandrov-Fenchel  inequality, as detailed in \cite{book-convex-body,Xia-arxiv,Shenfeld23,McCoy05,Guan-Li-09,Xia2017,Wei-Xiong-22,Wei-Xiong-21,Wang-Weng-Xia,Wang-Weng-Xia-2024-inverseMC,Mei-Wang-Weng-2024-MCF,Weng-Xia-22-ball,Hu-Wei-Yang-Zhou,Makuicheng,Ding-Li-JFA,Gao-Li-JGA}, and references therein.

The rest of this paper is organized as follows. In Section \ref{sec 2}, we briefly introduce some preliminaries on the anisotropic capillary hypersurfaces. In Section \ref{sec 3}, some properties about capillary convex bodies are obtained, and especially, several key lemmas are proved.
Section \ref{sec 4} proves the main Theorem \ref{thm:A-F-convex} and \ref{thm:A-F-k}.

\section{Preliminary}\label{sec 2}
\subsection{The Wulff shape and dual Minkowski norm}
Let $F$ be a smooth positive function on the standard sphere $(\mathbb{S}^n, g^{\mathbb{S}^n} ,\nabla^{\mathbb{S}})$ such that the matrix
\begin{equation*}
A_{F}(x)~=~\nabla^{\mathbb{S}}\nabla^{\mathbb{S}} {F}(x)+F(x)g^{\mathbb{S}^n}, 
\quad x\in \mathbb{S}^n,
\end{equation*}
is positive definite on $  \mathbb{S}^n$,
where 
 $g^{\mathbb{S}^n}$ denotes the round metric on $\mathbb{S}^n$.
  Then there exists a unique smooth strictly convex hypersurface $\W$ defined by
\begin{align*}
\W=\{\Psi(x)|\Psi(x):=F(x)x+\nabla^{\mathbb{S}} {F}(x),~x\in \mathbb{S}^n\},
\end{align*}
whose support function is given by $F$ (see \cite{Xia2017}). We call $\W$ the Wulff shape determined by the function $F\in C^{\infty}(\mathbb{S}^n)$. When $F$ is a constant, the Wulff shape is just a round sphere.

The smooth function $F$ on $\mathbb{S}^n$ can be extended  to a $1$-homogeneous function on $\mathbb{R}^{n+1}$ by
\begin{equation*}
F(x)=|x|F({x}/{|x|}), \quad x\in \mathbb{R}^{n+1}\setminus\{0\},
\end{equation*}
and setting $F(0)=0$. Then it is easy to show that $\Psi(x)=DF(x)$ for $x\in \mathbb{S}^n$, where $D$ denotes the standard gradient on $\mathbb{R}^{n+1}$.


The homogeneous extension $F$ defines a Minkowski norm on $\mathbb{R}^{n+1}$, that is, $F$ is a norm on $\mathbb{R}^{n+1}$ and $D^2(F^2)$ is uniformly positive definite on $\mathbb{R}^{n+1}\setminus\{0\}$ (see \cite{Xia-phd}). We can define a dual Minkowski norm $F^0$ on $\mathbb{R}^{n+1}$ by
\begin{align*}
F^0(\xi):=\sup_{x\neq 0}\frac{\langle x,\xi\rangle}{{F}(x)},\quad \xi\in \mathbb{R}^{n+1}.
\end{align*}
We call $\W$ the unit Wulff shape since
 $$\W=\{x\in \R^{n+1}: F^0(x)=1\}.$$
A Wulff shape of radius $r_0$ centered at $x_0$ is given by
\begin{align*}
	\W_{r_0}(x_0)=\{x\in\mathbb{R}^{n+1}:F^0(x-x_0)=r_0\}.
\end{align*}
An $\omega_0$-capillary Wulff shape of radius $r_0$ (see \cite{Jia-Wang-Xia-Zhang2023}) is given by
\begin{align}\label{equ:crwo}
	\W_{r_0,\omega_0}(E):=\{x\in\overline{\mathbb{R}_+^{n+1}}:F^0(x-r_0\omega_0E)=r_0\},
\end{align}
 which is a part of a Wulff shape cut by a hyperplane  $\{x_{n+1}=0\}$, here $E$ should satisfy $\<E,E_{n+1}\>=1$ which guarantees that  $\W_{r_0,\omega_0}(E)$ satisfies anisotropic capillary condition \eqref{equ:w0-capillary}. It is easy to see that $E-E_{n+1}^F\in \partial\overline{\mathbb{R}^{n+1}_+}$, since $\<E_{n+1},E_{n+1}^F\>=1$. If there is no confusion, we often take $E=E_{n+1}^F$ and we just write $\W_{r_0,\omega_0}:=\W_{r_0,\omega_0}(E^F_{n+1})$ in this paper.

\subsection{Anisotropic curvature}
Let $(\Sigma,g,\nabla)\subset \overline{\mathbb{R}^{n+1}_+}$ be a $C^2$ hypersurface with $\partial\Sigma\subset\partial\overline{\mathbb{R}^{n+1}_+}$. 
The anisotropic Gauss map of $\Sigma$  is defined by $$\begin{array}{lll}\nu_F: &&\Sigma\to  \W\\
&&X\mapsto \Psi(\nu(X))=F(\nu(X))\nu(X)+\nabla^\mathbb{S} F(\nu(X)).\end{array} $$
The anisotropic principal curvatures $\k^F=(\k^F_1,\cdots, \k^F_n)$ of $\Sigma$ with respect to $\W$ at $X\in \Sigma$  are defined as the eigenvalues of
 \begin{align}\label{equ:S_F}
S_F=\mathrm{d}\nu_F=\mathrm{d}(\Psi\circ\nu)=A_F\circ \mathrm{d}\nu : T_X \Sigma\to T_{\nu_F(X)} \W=T_X \Sigma.
\end{align}

We define the normalized $k$-th elementary symmetric function $H^F_k=\sigma_k(\kappa^F)/\binom{n}{k}$ of the anisotropic principal curvature $\kappa^F$:
\begin{align*}
H^F_k:=\binom{n}{k}^{-1}\sum_{1\leq {i_1}<\cdots<{i_k}\leq n} \kappa^F_{i_1}\cdots \kappa^F_{i_k},\quad k=1,\cdots,n,
\end{align*}
where $\binom{n}{k}=\frac{n!}{k!(n-k)!}$.
Setting $H^F_{0}=1$, $H^F_{n+1}=0$, and $H_F=nH_1^F$
for convenience. To simplify the notation, from now on, we use  $\sigma_k$ to denote function $\sigma_k(\kappa^F)$ of the anisotropic principal curvatures, if there is no confusion.

\subsection{New metric and anisotropic formulas}\label{subsec 2.3}
There are new metric on $\Sigma$ and $\mathbb{R}^{n+1}$, which were introduced by Andrews in \cite{And01} and reformulated by Xia in \cite{Xia13}.  This new Riemannian metric $G$ with respect to $F^0$ on $\mathbb{R}^{n+1}$ is defined as
\begin{align*}
	G(\xi)(V,W):=\sum_{\alpha,\beta=1}^{n+1}\frac{\partial^2 \frac12(F^0)^2(\xi)}{\partial \xi^\alpha\partial \xi^\beta} V^\alpha W^\beta, \quad\hbox{ for } \xi\in \mathbb{R}^{n+1}\setminus \{0\}, V,W\in T_\xi{\mathbb{R}^{n+1}}.
\end{align*}
The third order derivative of $F^0$ gives a $(0,3)$-tensor
\begin{equation}\nonumber
	Q(\xi)(U,V,W):=\sum_{\alpha,\beta,\gamma=1}^{n+1} Q_{\alpha\beta\gamma}(\xi)U^\alpha V^\beta W^\gamma:=\sum_{\alpha,\beta,\gamma=1}^{n+1} \frac{\partial^3(\frac12(F^0)^2(\xi)}{\partial \xi^\alpha \partial \xi^\beta \partial \xi^\gamma}U^\alpha V^\beta W^\gamma,
\end{equation}
for $\xi\in \mathbb{R}^{n+1}\setminus \{0\},$ $U,V,W\in T_\xi{\mathbb{R}^{n+1}}.$

When we restrict the metric $G$ to $\mathcal{W}$,  the $1$-homogeneity of $F^0$ implies that
\begin{eqnarray*}
	&G(\xi)(\xi,\xi)=1,  \quad G(\xi)(\xi, V)=0, \quad \hbox{ for } \xi\in \W,\  V\in T_\xi \W.
	\\&Q(\xi)(\xi, V, W)=0, \quad \hbox{ for } \xi\in\W,\  V, W\in \mathbb{R}^{n+1}.
\end{eqnarray*}
For a smooth hypersurface $\Sigma$ in $\overline{\mathbb{R}_+^{n+1}}$, since $\nu_F(X)\in \W$ for $X\in \Sigma$, we have
\begin{align}
	&G(\nu_F)(\nu_F,\nu_F)=1, \quad G(\nu_F)(\nu_F, V)=0, \quad \hbox{ for } V\in T_X \Sigma,\nonumber
	\\
	&Q(\nu_F)(\nu_F, V, W)=0, \quad \hbox{ for } V, W\in \mathbb{R}^{n+1}.\nonumber
\end{align}	
This means  $\nu_F(X)$ is perpendicular to $T_X \Sigma$ with respect to the metric $G(\nu_F)$.
Then the Riemannian metric $\hat{g}$ on $\Sigma$ induced from $(\mathbb{R}^{n+1}, G)$  can be defined as
\begin{eqnarray*}
	\hat{g}(X):=G(\nu_F(X))|_{T_X \Sigma}, \quad X\in \Sigma.
\end{eqnarray*}

Denote by $\hat{g}_{ij}$ and $\hat{h}_{ij}$ the first and second fundamental form of $(\Sigma, \hat{g})\subset (\mathbb{R}^{n+1}, G)$, respectively, that is
$$\hat{g}_{ij}=G(\nu_F(X))(\p_i X, \p_j X),\quad \hat{h}_{ij}=-G(\nu_F(X))(\nu_F, \partial_i\p_j X).$$
If $\{\hat{g}^{ij}\}$ the inverse matrix of $\{\hat{g}_{ij}\}$, we can reformulate $\k^F$ as  the  eigenvalues of $\{\hat{h}_j^i\}=\{\hat{g}^{ik}\hat{h}_{kj}\}$.
It is easy to see that if $\Sigma=\W$, we have $\nu_F(\W)=X(\W)$, $\hat{h}_{ij}=\hat{g}_{ij}$ and $\kappa^F=(1,\cdots,1)$. If  $X\in\Sigma=\mathcal{C}_{\omega_0}\subset\mathcal{T}\W$, we have $\nu_F(X)=\mathcal{T}^{-1}X=X-\omega_0E_{n+1}^F$, then  $\kappa^F=(1,\cdots,1)$. In this case we denote $\mathring{g}:=G(\mathcal{T}^{-1}{\xi})|_{T_{{\xi}}\mathcal{C}_{\omega_0}}$ (for ${\xi}\in\mathcal{C}_{\omega_0}$), which is a Riemmannian metric on $\mathcal{C}_{\omega_0}$, and denote by $\mathring{\nabla}$ the Levi-Civita connection of $\mathring{g}$.

We denote by $\hat{\nabla}$ the Levi-Civita connections of $\hat{g}$ on $\Sigma$, then the anisotropic Gauss-Weingarten type formula and the anisotropic Gauss-Codazzi type equation are as follows.
\begin{lemma}[\cite{Xia13}*{Lemma 2.5}] \label{lem2-1}
	\begin{eqnarray}
		\partial_i\partial_j X=-\hat{h}_{ij}\nu_F+\hat{\nabla}_{\partial_i } \partial_j+\hat{g}^{kl}A_{ijl}\partial_kX; \;\;\;\hbox{ (Gauss formula)}\label{equ:Gauss-formula}
		\end{eqnarray}
		\begin{eqnarray*}
		\partial_i \nu_F=\hat{g}^{jk}\hat{h}_{ij}\partial_k X;\;\; \hbox{ (Weingarten formula) }
	\end{eqnarray*}
		 $$\hat{R}_{i j k \ell}=\hat{h}_{i k} \hat{h}_{j \ell}-\hat{h}_{i \ell} \hat{h}_{j k}+\hat{\nabla}_{\partial_{\ell}} A_{j k i}-\hat{\nabla}_{\partial_k} A_{j \ell i}+A_{j k}^m A_{m \ell i}-A_{j \ell}^m A_{m k i} ;\quad	\text{(Gauss equation)}$$
	\begin{eqnarray*}
		\hat{\nabla}_k\hat{h}_{ij}+\hat{h}_j^lA_{lki}=\hat{\nabla}_j\hat{h}_{ik}+\hat{h}_k^lA_{lji}. \;\;\hbox{ (Codazzi equation) }
	\end{eqnarray*}
	Here, $\hat{R}$ is the Riemannian curvature tensor of $\hat{g}$ and  $A$ is a $3$-tensor
	\begin{eqnarray*}
	A_{ijk}=-\frac12\left(\hat{h}_i^l Q_{jkl}+\hat{h}_j^l Q_{ilk}-\hat{h}_k^l Q_{ijl}\right),
	\end{eqnarray*} where $Q_{ijk}=Q(\nu_F)(\partial_i X, \partial_j X, \partial_k X)$. Note that the $3$-tensor $A$ on $(\Sigma,\hat{g})$ depends on $\hat{h}_i^j$. It is known that $Q$ is totally symmetric in all three indices, while $A$ is only symmetric for the first two indices.
\end{lemma}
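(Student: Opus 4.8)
\textbf{Proof proposal for Lemma~\ref{lem2-1}.}

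The plan is to derive the four identities by differentiating the defining relations of the new metric $\hat g$ and the anisotropic Gauss map, exactly as one does in the Euclidean case but with careful bookkeeping of the extra terms produced by the non-trivial third-order tensor $Q$. First I would fix a point $X\in\Sigma$ and a local frame $\{\partial_i\}$ of $T_X\Sigma$, and decompose the ambient second derivative $\partial_i\partial_j X$ along the $\hat g$-orthogonal splitting $T_X\mathbb{R}^{n+1}=T_X\Sigma\oplus\mathbb{R}\,\nu_F$. The normal part is $-\hat h_{ij}\nu_F$ by the very definition of $\hat h_{ij}$ together with $G(\nu_F)(\nu_F,\nu_F)=1$. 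For the tangential part, one writes $G(\nu_F)(\partial_i\partial_j X,\partial_k X)$ and compares it with the Christoffel symbols of $\hat g$; the point is that $\hat g_{ij}=G(\nu_F(X))(\partial_i X,\partial_j X)$ depends on $X$ \emph{both} through the slots $\partial_i X,\partial_j X$ \emph{and} through the argument $\nu_F(X)$ of $G$. Differentiating, the first dependence reproduces the usual Koszul formula giving $\hat\nabla_{\partial_i}\partial_j$, while the second dependence contributes a term involving $\partial_\gamma G_{\alpha\beta}=Q_{\alpha\beta\gamma}$ contracted with $\partial_k\nu_F$; packaging this into $\hat g^{kl}A_{ijl}\partial_k X$ with the stated formula for $A$ is the computation that produces the Gauss formula \eqref{equ:Gauss-formula}.

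Next, the Weingarten formula $\partial_i\nu_F=\hat g^{jk}\hat h_{ij}\partial_k X$ follows by noting that $\partial_i\nu_F\in T_X\mathbb{R}^{n+1}$ and computing its $\hat g$-components: its $\nu_F$-component vanishes because $G(\nu_F)(\nu_F,\nu_F)\equiv1$ forces $G(\nu_F)(\nu_F,\partial_i\nu_F)=\tfrac12\partial_i[G(\nu_F)(\nu_F,\nu_F)]-\tfrac12 Q(\nu_F)(\partial_i\nu_F,\nu_F,\nu_F)=0$ using $Q(\nu_F)(\nu_F,\cdot,\cdot)=0$; and its tangential component is $G(\nu_F)(\partial_i\nu_F,\partial_k X)=-G(\nu_F)(\nu_F,\partial_i\partial_k X)=\hat h_{ik}$, again because differentiating $G(\nu_F)(\nu_F,\partial_k X)=0$ and invoking $Q(\nu_F)(\nu_F,\cdot,\cdot)=0$ kills the $Q$-term. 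This is the place where the special annihilation property of $Q$ along the radial direction of the Wulff shape is essential. Once Gauss and Weingarten are in hand, the Gauss equation and the Codazzi equation are obtained in the standard way: compute $\partial_k\partial_\ell\partial_j X$ two ways using the Gauss formula, take the tangential and normal components of the commutator $\partial_k\partial_\ell(\partial_j X)-\partial_\ell\partial_k(\partial_j X)$, and collect terms; the normal part yields Codazzi (with the correction $\hat h_j^l A_{lki}$ coming from the $A$-term in the Gauss formula), and the tangential part, after re-expressing the curvature $\hat R_{ijk\ell}$ of $\hat g$ via its Christoffel symbols, yields the stated Gauss equation with the extra $\hat\nabla A$ and $A\ast A$ terms.

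The main obstacle I anticipate is purely organizational rather than conceptual: keeping track of the precise form of the tensor $A_{ijk}=-\tfrac12(\hat h_i^l Q_{jkl}+\hat h_j^l Q_{ilk}-\hat h_k^l Q_{ijl})$ and verifying that the asymmetry of $A$ in the last index (inherited from the fact that $A$ is assembled from $Q$ with a sign pattern that is symmetric only in $i,j$) is consistent at every step — in particular that the Codazzi identity genuinely picks up $\hat h_j^l A_{lki}$ on one side and $\hat h_k^l A_{lji}$ on the other, and that these do \emph{not} cancel. Since this is exactly Lemma~2.5 of \cite{Xia13}, I would either reproduce that computation verbatim or simply cite it; the only new content needed here is the observation that on $\Sigma=\mathcal{C}_{\omega_0}\subset\mathcal{T}\W$ one has $\nu_F(X)=X-\omega_0 E_{n+1}^F$, whence $\hat h_{ij}=\hat g_{ij}$, $\kappa^F=(1,\dots,1)$, and $\hat g$ coincides with the reference metric $\mathring g$ — facts that follow directly from the Weingarten formula and the $1$-homogeneity identities $G(\xi)(\xi,\xi)=1$, $Q(\xi)(\xi,\cdot,\cdot)=0$ recorded just above.
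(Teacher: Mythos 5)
Your proposal is correct and follows exactly the computation behind the cited result: the paper itself gives no proof here but simply quotes Lemma~2.5 of \cite{Xia13}, whose derivation is precisely the decomposition of $\partial_i\partial_j X$ and $\partial_i\nu_F$ along the $G(\nu_F)$-orthogonal splitting, using $G(\xi)(\xi,\xi)=1$ and $Q(\xi)(\xi,\cdot,\cdot)=0$ to kill the radial terms and the $X$-dependence of $\hat g$ through $\nu_F$ to produce the $A$-corrections. Your bookkeeping of the normal and tangential components, and of the asymmetric tensor $A_{ijk}$ in the Codazzi identity, matches the source, so citing or reproducing that computation is exactly what is intended.
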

Here $X$ and $\nu_F$ can be regarded as vector-valued functions in $\mathbb{R}^{n+1}$ with a fixed Cartesian coordinate. Hence, the terms $\partial_i\partial_j X$ and $\partial_i \nu_F$ are understood as the usual partial derivative on vector-valued functions. 

The case $\Sigma=\mathcal{C}_{\omega_0}$ is the most important case which will be used in this paper. Analogue to \cite{Xia-2017-convex}*{Proposition 2.2}, we rewrite Lemma \ref{lem2-1} in this case.

\begin{lemma}\label{lemma:Gauss-Weingarten}
	 Let $X \in \mathcal{C}_{\omega_0}\subset\mathcal{T}(\W)=\W+\omega_0E_{n+1}^F$.  We  have $\nu_{{F}}=\mathcal{T}^{-1}(X)$, $\partial_iX=\partial_i(\mathcal{T}^{-1}(X))$, $\mathring{h}_{ij}(X)=\mathring{g}_{ij}(X)=\hat{g}_{ij}(\mathcal{T}^{-1}(X))=\hat{h}_{ij}(\mathcal{T}^{-1}(X))$ with $\{e_i=\partial_iX\}_{i=1}^n\in T_X{\mathcal{C}_{\omega_0}}=T_{\mathcal{T}^{-1}(X)}\W$.
$$
A_{i j k}=-\frac{1}{2} Q_{i j k}=-\frac{1}{2} Q(\mathcal{T}^{-1}(X))\left(\partial_i X, \partial_j X, \partial_k X\right),
$$
and
\begin{align*}
	\mathring{\nabla}_i Q_{j k l}=\mathring{\nabla}_j Q_{i k l} .
\end{align*}
The Gauss-Weingarten formula and the Gauss equation are as follows:
\begin{align}
	\partial_i \partial_j X & =-\mathring{g}_{i j} \mathcal{T}^{-1}(X)+\mathring{\nabla}_{\partial_i} \partial_j-\frac{1}{2} \mathring{g}^{k l} Q_{i j l} \partial_k X ; \text { (Gauss formula) } \nonumber\\
	\partial_i \nu_F & =\partial_i X ; \text { (Weingarten formula) }\nonumber \\
	\mathring{R}_{i j k l} & =\mathring{g}_{i k} \mathring{g}_{j l}-\mathring{g}_{i l} \mathring{g}_{j k}+\frac{1}{4} \mathring{g}^{p m} Q_{j k p} Q_{m l i}-\frac{1}{4} \mathring{g}^{p m} Q_{j l p} Q_{m k i}. \text { (Gauss equation) }\nonumber
\end{align}
\end{lemma}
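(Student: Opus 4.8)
The final statement to prove is Lemma \ref{lemma:Gauss-Weingarten}, which specializes the general anisotropic Gauss--Weingarten formulas of Lemma \ref{lem2-1} to the case $\Sigma = \mathcal{C}_{\omega_0}$. The plan is to exploit the single structural fact that $\mathcal{C}_{\omega_0} = \mathcal{T}(\W) = \W + \omega_0 E_{n+1}^F$ is a rigid translate of (a piece of) the unit Wulff shape, so that its anisotropic Gauss map is essentially the identity.

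\medskip
\noindent\textbf{Step 1: Identify the anisotropic Gauss map.}
First I would recall from \eqref{equ:crwo} and the definition of $F^0$ that for $\xi \in \mathcal{C}_{\omega_0} \subset \mathcal{T}(\W)$ one has $F^0(\mathcal{T}^{-1}(\xi)) = F^0(\xi - \omega_0 E_{n+1}^F) = 1$, so $\mathcal{T}^{-1}(\xi) \in \W$. Since translation does not change the Euclidean unit normal, $\nu$ at $\xi \in \mathcal{C}_{\omega_0}$ equals $\nu$ at $\mathcal{T}^{-1}(\xi) \in \W$; and because $\W$ is the Wulff shape with support function $F$, its anisotropic Gauss map is the identity on $\W$, i.e. $\Psi(\nu(y)) = y$ for $y \in \W$. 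Composing, $\nu_F(\xi) = \Psi(\nu(\xi)) = \Psi(\nu(\mathcal{T}^{-1}(\xi))) = \mathcal{T}^{-1}(\xi)$. This gives $\nu_F = \mathcal{T}^{-1}(X)$ and, differentiating, $\partial_i \nu_F = \partial_i(\mathcal{T}^{-1}(X)) = \partial_i X$ (the Weingarten formula), since $\mathcal{T}$ is a constant translation. Because $X$ and $\mathcal{T}^{-1}(X)$ have the same tangent spaces and the induced metrics $\mathring g = G(\mathcal{T}^{-1}(X))|_{T_X\mathcal{C}_{\omega_0}}$ and $\hat g$ on $\W$ agree, and since $\kappa^F = (1,\dots,1)$ forces $\hat h_{ij} = \hat g_{ij}$ on $\W$ (as already noted in the text preceding the lemma), we get $\mathring h_{ij} = \mathring g_{ij} = \hat g_{ij}(\mathcal{T}^{-1}(X)) = \hat h_{ij}(\mathcal{T}^{-1}(X))$.

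\medskip
\noindent\textbf{Step 2: Simplify the $3$-tensor $A$ and the Codazzi/Gauss data.}
With $\hat h_i^j = \delta_i^j$, the formula $A_{ijk} = -\tfrac12(\hat h_i^l Q_{jkl} + \hat h_j^l Q_{ilk} - \hat h_k^l Q_{ijl})$ from Lemma \ref{lem2-1} collapses, using total symmetry of $Q$, to $A_{ijk} = -\tfrac12(Q_{jki} + Q_{ikj} - Q_{ijk}) = -\tfrac12 Q_{ijk}$, where $Q_{ijk} = Q(\nu_F)(\partial_i X, \partial_j X, \partial_k X) = Q(\mathcal{T}^{-1}(X))(\partial_i X, \partial_j X, \partial_k X)$. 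Substituting $A_{ijk} = -\tfrac12 Q_{ijk}$ and $\hat h_{ij} = \hat g_{ij}$ into the Gauss formula $\partial_i\partial_j X = -\hat h_{ij}\nu_F + \hat\nabla_{\partial_i}\partial_j + \hat g^{kl}A_{ijl}\partial_k X$ yields $\partial_i\partial_j X = -\mathring g_{ij}\mathcal{T}^{-1}(X) + \mathring\nabla_{\partial_i}\partial_j - \tfrac12 \mathring g^{kl}Q_{ijl}\partial_k X$. Feeding the same data into the Gauss equation of Lemma \ref{lem2-1} and using the symmetry $A_{jk}^m A_{m\ell i} = \tfrac14 \mathring g^{pm}Q_{jkp}Q_{m\ell i}$ gives, after the $\hat\nabla A$ terms are handled, the stated curvature identity $\mathring R_{ijkl} = \mathring g_{ik}\mathring g_{jl} - \mathring g_{il}\mathring g_{jk} + \tfrac14\mathring g^{pm}Q_{jkp}Q_{mli} - \tfrac14\mathring g^{pm}Q_{jlp}Q_{mki}$. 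Finally, the Codazzi equation $\hat\nabla_k\hat h_{ij} + \hat h_j^l A_{lki} = \hat\nabla_j\hat h_{ik} + \hat h_k^l A_{lji}$ with $\hat h_{ij}=\hat g_{ij}$ (so $\hat\nabla_k\hat h_{ij}=0$) and $\hat h_j^l = \delta_j^l$ reduces to $A_{jki} = A_{kji}$, i.e. $Q_{jki} = Q_{kji}$; combining this with the first Bianchi-type symmetry already present in $Q$, and rewriting via $\mathring\nabla$, gives $\mathring\nabla_i Q_{jkl} = \mathring\nabla_j Q_{ikl}$. (The cleanest route here is to differentiate $\partial_i\nu_F = \partial_i X$ once more and compare mixed partials, or equivalently to apply the Codazzi equation directly; I would present whichever is shorter.)

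\medskip
\noindent\textbf{Main obstacle.}
None of this is deep — it is a systematic substitution of $\hat h_{ij} = \hat g_{ij}$, $\nu_F = \mathcal{T}^{-1}(X)$, and $A_{ijk} = -\tfrac12 Q_{ijk}$ into the formulas already established in Lemma \ref{lem2-1}. The only point requiring a little care is the derivation of the symmetry $\mathring\nabla_i Q_{jkl} = \mathring\nabla_j Q_{ikl}$: one must check that the Codazzi relation, which a priori only gives symmetry of the tensor $A$ (hence of $Q$) in a pair of its indices, actually upgrades to the covariant-derivative statement. This follows because $Q$ restricted to $\mathcal{C}_{\omega_0}$ is, up to the fixed translation, the third-derivative tensor of $\tfrac12(F^0)^2$ pulled back along the \emph{identity} Gauss map, so its covariant derivative inherits the symmetry of partial derivatives; I would verify this by a short computation using $\partial_i\nu_F=\partial_i X$ and the Gauss formula, and that is the one place where I would slow down and write the details out in full. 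Everything else is routine bookkeeping, and the resulting statement is exactly the anisotropic analogue of \cite{Xia-2017-convex}*{Proposition 2.2}, as the paragraph preceding the lemma promises.
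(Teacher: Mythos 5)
Your proposal is correct in substance and follows exactly the route the paper intends: the paper gives no proof of this lemma, merely asserting that it is the specialization of Lemma \ref{lem2-1} to $\Sigma=\mathcal{C}_{\omega_0}$ in analogy with \cite{Xia-2017-convex}*{Proposition 2.2}, and your Steps 1--2 carry out that substitution ($\nu_F=\mathcal{T}^{-1}(X)$, $\hat h_{ij}=\hat g_{ij}$, $A_{ijk}=-\tfrac12 Q_{ijk}$) correctly.

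One concrete correction: your primary route to the symmetry $\mathring\nabla_i Q_{jkl}=\mathring\nabla_j Q_{ikl}$ via the Codazzi equation is a dead end. With $\hat h_{ij}=\hat g_{ij}$ and $\hat h_j^l=\delta_j^l$, the Codazzi relation $\hat\nabla_k\hat h_{ij}+\hat h_j^lA_{lki}=\hat\nabla_j\hat h_{ik}+\hat h_k^lA_{lji}$ collapses to $Q_{jki}=Q_{kji}$, which is already contained in the total symmetry of $Q$ and carries no information about derivatives; it cannot be "upgraded" to the covariant statement. The derivative symmetry must come from the direct computation you mention only parenthetically: expand $\mathring\nabla_iQ_{jkl}=\partial_i\bigl(Q(\nu_F)(\partial_jX,\partial_kX,\partial_lX)\bigr)$ in normal coordinates, use $\partial_i\nu_F=\partial_iX$ so that the ambient derivative term is the totally symmetric fourth derivative of $\tfrac12(F^0)^2$ evaluated on $(\partial_iX,\partial_jX,\partial_kX,\partial_lX)$, and substitute the Gauss formula for $\partial_i\partial_jX$ together with $Q(\nu_F)(\nu_F,\cdot,\cdot)=0$; the resulting quadratic-in-$Q$ terms pair up symmetrically in $i\leftrightarrow j$ after relabelling the contracted indices. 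This symmetry is moreover \emph{needed} to make the $\hat\nabla A$ terms cancel in your Gauss-equation substitution, so it must be established first, not "handled" in passing. With that step made explicit the proof is complete.
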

\subsection{Anisotropic support function and  area element}
Let $u=\< X,\nu \> $ be the support function of $X$, and $\hat{u}:=G(\nu_F)(\nu_F,X)$ be the anisotropic support function of $X$ with respect to $F$(or $\W$). Then we have (see e.g. \cite{Xia2017})
\begin{equation}
	\label{u-hatu}
	\hat{u}=\frac{u}{F(\nu)}.
\end{equation}
Similar to \eqref{u-hatu}, since  $DF^0(DF(x))=\frac{x}{F(x)}$ (see e.g. \cite{Xia-phd}), we obtain
\begin{align}\label{equ:G(vF,Y)=<v,Y>/F}
	G(\nu_F)(\nu_F,Y)=\<DF^0(DF(\nu)),Y\>=\<\frac{\nu}{F(\nu)},Y\>=\frac{\<Y,\nu\>}{F(\nu)},
\end{align}
for any $Y\in \mathbb{R}^{n+1}$.

Let us define the anisotropic area element of $\Sigma$ as
\begin{align}\label{equ:duF}
	{\rm d}\mu_F:=F(\nu){\rm d}\mu_g.
\end{align}

The anisotropic area of $\Sigma$ is given by
\begin{align}
	\label{equ:F-area}
	|\Sigma |_F=\int_{\Sigma} {\rm d}\mu_F.
\end{align}


There holds the following anisotropic Minkowski integral formula for $\omega_0$-capillary hypersurface.
\begin{lemma}[Jia-Wang-Xia-Zhang, 2023, \cite{Jia-Wang-Xia-Zhang2023}*{Theorem 1.3}] \label{Thm1.2}
	Let  $\Sigma \subset \overline{\mathbb{R}_{+}^{n+1}}$ be a $C^2$ anisotropic $\omega_0$-capillary hypersurface, where $\omega_0 \in\left(-F\left(E_{n+1}\right)\right.,$ $\left.F\left(-E_{n+1}\right)\right)$.
	 Then for  $0 \leq k \leq n-1$, it holds
	\begin{align*}
		\int_{\Sigma} H_{k}^F\left(1+\omega_0G(\nu_F)( \nu_F, E_{n+1}^F)\right)-H_{k+1}^F\hat{ u} \mathrm{~d}\mu_F=0.
	\end{align*}
	
\end{lemma}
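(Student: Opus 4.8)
\emph{Proof proposal.} I would derive this anisotropic capillary Minkowski formula from the scaling behaviour of the quermassintegral $\mathcal{V}_{k+1,\omega_0}$ together with its first variation, rather than by a direct integration by parts. Recall the integral expression $\mathcal{V}_{k+1,\omega_0}(\Sigma)=\frac{1}{n+1}\int_{\Sigma}H_k^F\bigl(F(\nu)+\omega_0\langle\nu,E_{n+1}^F\rangle\bigr)\,\mathrm{d}\mu_g$, valid for $0\le k\le n$. Under a dilation $\Sigma\mapsto\lambda\Sigma$ about the origin, the unit normal $\nu$ is unchanged (hence so are $F(\nu)$ and $\langle\nu,E_{n+1}^F\rangle$), the anisotropic curvature $H_k^F$ scales by $\lambda^{-k}$, and $\mathrm{d}\mu_g$ scales by $\lambda^n$; thus $\mathcal{V}_{k+1,\omega_0}(\lambda\Sigma)=\lambda^{\,n-k}\mathcal{V}_{k+1,\omega_0}(\Sigma)$, and consequently $\frac{\mathrm{d}}{\mathrm{d}t}\big|_{t=0}\mathcal{V}_{k+1,\omega_0}\bigl((1+t)\Sigma\bigr)=(n-k)\,\mathcal{V}_{k+1,\omega_0}(\Sigma)$.

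Next I would note that the homothety $X_t=(1+t)X$ is an admissible deformation of $\Sigma$ within the class of anisotropic $\omega_0$-capillary hypersurfaces: the hyperplane $\partial\overline{\mathbb{R}^{n+1}_+}=\{x_{n+1}=0\}$ and the open half-space are invariant under $x\mapsto(1+t)x$ for $t>-1$, so $\partial\Sigma_t\subset\partial\overline{\mathbb{R}^{n+1}_+}$ and $\operatorname{int}(\Sigma_t)\subset\mathbb{R}^{n+1}_+$; and since the outward unit normal of $\Sigma_t$ at $(1+t)p$ coincides with that of $\Sigma$ at $p$, the anisotropic Gauss map image along the boundary is unchanged and the contact condition \eqref{equ:w0-capillary} persists for all small $t$. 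Therefore the first variation formula of \cite{Ding-Gao-Li-arxiv}*{Theorem 6.2} applies with no surviving boundary term and yields $\frac{\mathrm{d}}{\mathrm{d}t}\mathcal{V}_{k+1,\omega_0}(\Sigma_t)=\frac{n-k}{n+1}\int_{\Sigma_t}\varphi_t\,H_{k+1}^F\,\mathrm{d}\mu_F$, where $\varphi_t=G(\nu_F)(\partial_t X_t,\nu_F)$ is the anisotropic normal speed. Since $\partial_t X_t|_{t=0}=X$, one has $\varphi_0=G(\nu_F)(\nu_F,X)=\hat{u}$, so $\frac{\mathrm{d}}{\mathrm{d}t}\big|_{t=0}\mathcal{V}_{k+1,\omega_0}(\Sigma_t)=\frac{n-k}{n+1}\int_{\Sigma}\hat{u}\,H_{k+1}^F\,\mathrm{d}\mu_F$.

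Equating the two computations of the derivative and cancelling the factor $n-k>0$ (legitimate since $k\le n-1$) gives $(n+1)\mathcal{V}_{k+1,\omega_0}(\Sigma)=\int_{\Sigma}\hat{u}\,H_{k+1}^F\,\mathrm{d}\mu_F$; on the other hand, writing $\mathrm{d}\mu_F=F(\nu)\,\mathrm{d}\mu_g$ and, by \eqref{equ:G(vF,Y)=<v,Y>/F}, $\langle\nu,E_{n+1}^F\rangle\,\mathrm{d}\mu_g=G(\nu_F)(\nu_F,E_{n+1}^F)\,\mathrm{d}\mu_F$, the same quermassintegral equals $\frac{1}{n+1}\int_{\Sigma}H_k^F\bigl(1+\omega_0 G(\nu_F)(\nu_F,E_{n+1}^F)\bigr)\,\mathrm{d}\mu_F$, and subtracting the two identities produces precisely $\int_{\Sigma}\bigl(H_k^F(1+\omega_0 G(\nu_F)(\nu_F,E_{n+1}^F))-H_{k+1}^F\hat{u}\bigr)\,\mathrm{d}\mu_F=0$. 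The one point requiring care is the invocation of \cite{Ding-Gao-Li-arxiv}*{Theorem 6.2}: one must check that it is available in this form for an arbitrary $C^2$ anisotropic $\omega_0$-capillary hypersurface and that the scaling family lies in its admissible class, which is exactly what the boundary discussion above verifies. If one prefers a self-contained argument, the classical route also works: decompose $X=X^\top+\hat{u}\,\nu_F$ using the anisotropic Gauss--Weingarten formulas of Lemma \ref{lem2-1}, contract $\hat{\nabla}X^\top$ with the $k$-th anisotropic Newton transformation of $\hat{h}_j^i$, and integrate by the divergence theorem on $(\Sigma,\hat g)$; there the genuine obstacle is that the asymmetry of $S_F$ in \eqref{equ:S_F} creates extra $A$-tensor terms that must be shown to cancel the correction in the anisotropic divergence of the Newton transformation, and the resulting boundary integral over $\partial\Sigma$ must then be matched with the $\omega_0$-weighted bulk term using \eqref{equ:w0-capillary} and the fact that $\langle X,E_{n+1}\rangle=0$ on $\partial\Sigma$.
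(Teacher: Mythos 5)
The paper does not actually prove this lemma: it is imported verbatim as Theorem 1.3 of Jia--Wang--Xia--Zhang \cite{Jia-Wang-Xia-Zhang2023}, whose own argument is the direct one you sketch only at the end --- decompose the position vector, contract with the $k$-th anisotropic Newton tensor, integrate by parts on $(\Sigma,\hat g)$, and use the capillary condition \eqref{equ:w0-capillary} together with $\langle X,E_{n+1}\rangle=0$ on $\partial\Sigma$ to handle the boundary integral. Your main route --- homothety invariance of $\mathcal{V}_{k+1,\omega_0}$ plus its first variation --- is therefore genuinely different, and the computation itself is sound: the scaling exponent $n-k$ is right (both the bulk and the boundary pieces of $\mathcal{V}_{k+1,\omega_0}$ scale this way), the dilation is an admissible capillary-preserving variation since $\partial_tX=X$ is tangent to $\{x_{n+1}=0\}$ along $\partial\Sigma$ and leaves $\nu$ unchanged, $\varphi_0=G(\nu_F)(X,\nu_F)=\hat u$, and the rewriting of the quermassintegral via \eqref{equ:G(vF,Y)=<v,Y>/F} and $\mathrm{d}\mu_F=F(\nu)\,\mathrm{d}\mu_g$ is exactly the identity used elsewhere in the paper. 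What this buys is a two-line derivation once the variational formula is in hand; what it costs is a logical dependence that you rightly flag but should take more seriously: \cite{Ding-Gao-Li-arxiv}*{Theorem 6.2} is stated in that paper for its flow setting, and one must verify both that it holds for an arbitrary $C^2$ (not necessarily convex) capillary hypersurface with no surviving boundary term, and --- crucially --- that its proof does not itself invoke the Jia--Wang--Xia--Zhang Minkowski formula, otherwise your argument is circular. First variation formulas for curvature integrals are normally proved by an independent integration-by-parts computation, so the circularity is unlikely but not something you can wave away when the lemma you are proving is precisely the standard input to such computations. Absent that verification, the self-contained ``classical route'' you outline is the safer proof, and there your identification of the real difficulty (the asymmetry of $S_F$ producing $A$-tensor corrections that must cancel against the anisotropic divergence of the Newton tensor) is accurate.
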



\section{Anisotropic $\omega_0$-capillary convex bodies and mixed volume in $\overline{\mathbb{R}^{n+1}_+}$}\label{sec 3}
In this section, we shall study  anisotropic $\omega_0$-capillary convex bodies in $\overline{\mathbb{R}^{n+1}_+}$, in the frame work of  Mei, Wang, Weng and Xia \cite{Xia-arxiv} for Euclidean (isotropic) $\theta$-capillary convex bodies. 

\subsection{Anisotropic capillary convex bodies and convex functions}

	Given a Wulff shape $\W$ (or a Minkowski norm $F$). For $\omega_0\in\left(-F(E_{n+1}),F(-E_{n+1})\right)$, let $\Sigma$ be a strictly convex anisotropic $\omega_0$-capillary hypersurface in $\overline{\mathbb{R}_+^{n+1}}$ (satisfies anisotropic capillary boundary condition \eqref{equ:w0-capillary}), and $\widehat{\Sigma}\in\mathcal{K}_{\omega_0}$  the anisotropic $\omega_0$-capillary convex body associated with $\Sigma$. 

	The simplest capillary convex body $\widehat{\mathcal{C}}_{\omega_0}\in \K_{\omega_0}$ is the bounded domain enclosed by $\mathcal{C}_{\omega_0}$ and $\partial\overline{\mathbb{R}_+^{n+1}}$.
	
	Let $\mathcal{T}:\mathbb{R}^{n+1}\to \mathbb{R}^{n+1}$ be the translation given by \eqref{equ:T=}. 
	Instead of using the usual anisotropic Gauss map $\nu_F$, it is more convenient to use the following map
	\begin{eqnarray*}
		\widetilde {\nu_F}:= \mathcal{T} \circ \nu_F: \Sigma \to \mathcal{C}_{\omega_0},\end{eqnarray*}
	which we call \textbf{ anisotropic capillary Gauss map} of $\Sigma$.
	
	 Next for any $\widehat{\Sigma}\in \mathcal {K}_{\omega_0}$ we give a parametrization of $\Sigma$   via the anisotropic capillary Gauss map $\widetilde {\nu_F}$.
	\begin{lemma}\label{Lemma:gaussmap}
			Given a Wulff shape $\W$ (or a Minkowski norm $F$), and a constant  $\omega_0\in\left(-F(E_{n+1}),F(-E_{n+1})\right)$. Denote  $\mathcal{S}:=\{x\in\mathbb{S}^n:\<DF(x),E_{n+1}\>\geq -\omega_0\}$.  
			 For any  $\widehat{\Sigma}\in\mathcal{K}_{\omega_0}$, we have
		\begin{itemize}
			\item [(1)]  The Gauss map $\nu$ of $\Sigma$ has its spherical image $\mathcal{S}\subset\mathbb{S}^n$, and $\nu:\Sigma \to \mathcal{S}$ is a diffeomorphism.
			
			\item [(2)] The anisotropic capillary Gauss map $\widetilde{\nu_{{F}}}=\nu_{{F}}+\omega_0E_{n+1}^F$ of $\Sigma$ has its image in $\mathcal{C}_{\omega_0}=\(\W+\omega_0E_{n+1}^F\)\bigcap\overline{\mathbb{R}^{n+1}_+}$, and  $\widetilde {\nu_F}: \Sigma\to \mathcal{C}_{\omega_0}$  
	is a diffeomorphism.
		\end{itemize}
	\end{lemma}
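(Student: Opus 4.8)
\textbf{Proof plan for Lemma \ref{Lemma:gaussmap}.}

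The plan is to reduce everything to the corresponding well-known statement for the classical anisotropic (closed) convex body theory, together with the strict convexity of $\Sigma$ and the capillary boundary condition. First I would prove part (1). Since $\Sigma$ is strictly convex, the Euclidean Gauss map $\nu:\Sigma\to\mathbb{S}^n$ is injective, so it is a diffeomorphism onto its image; it therefore suffices to identify the image as $\mathcal{S}$. A point $x\in\mathbb{S}^n$ lies in $\nu(\Sigma)$ precisely when the outward supporting hyperplane of $\widehat\Sigma$ with outer normal $x$ touches $\Sigma$ in $\mathrm{int}(\overline{\mathbb{R}^{n+1}_+})$ (or on $\partial\Sigma$), as opposed to being ``cut off'' by the wall $\partial\overline{\mathbb{R}^{n+1}_+}$. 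The boundary case is the anisotropic $\omega_0$-capillary condition \eqref{equ:w0-capillary}: for $X\in\partial\Sigma$ one has $\langle\Psi(\nu(X)),-E_{n+1}\rangle=\omega_0$, i.e. $\langle DF(\nu(X)),E_{n+1}\rangle=-\omega_0$; moving into the interior of $\Sigma$ the convex body only moves away from the wall, which (using that $x\mapsto\langle DF(x),E_{n+1}\rangle$ is monotone along the relevant direction) translates exactly into $\langle DF(\nu(X)),E_{n+1}\rangle\geq-\omega_0$. Hence $\nu(\Sigma)=\mathcal{S}$. One should also check $\mathcal{S}$ is a closed spherical region diffeomorphic to a disk and that the map is a diffeomorphism up to the boundary, which follows from strict convexity (positivity of $A_F\circ d\nu$, equivalently $H^F_n>0$) as in the closed anisotropic case.

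Next I would deduce part (2) from part (1). By definition $\nu_F=\Psi\circ\nu:\Sigma\to\W$, and $\Psi$ restricted to $\mathbb{S}^n$ is a diffeomorphism onto $\W$ because $A_F$ is positive definite; hence $\nu_F=\Psi\circ\nu:\Sigma\to\Psi(\mathcal{S})$ is a diffeomorphism. Composing with the translation $\mathcal{T}$, which is a global diffeomorphism of $\mathbb{R}^{n+1}$, gives that $\widetilde{\nu_F}=\mathcal{T}\circ\nu_F:\Sigma\to\mathcal{T}(\Psi(\mathcal{S}))$ is a diffeomorphism. It remains only to identify $\mathcal{T}(\Psi(\mathcal{S}))$ with $\mathcal{C}_{\omega_0}=(\W+\omega_0E_{n+1}^F)\cap\overline{\mathbb{R}^{n+1}_+}$. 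By the very definition of $\mathcal{S}$ we have $\Psi(\mathcal{S})=\{\Psi(x):x\in\mathbb{S}^n,\ \langle\Psi(x),E_{n+1}\rangle\geq-\omega_0\}=\W\cap\{x_{n+1}\geq-\omega_0\}$, and applying $\mathcal{T}:x\mapsto x+\omega_0E_{n+1}^F$ and using $\langle E_{n+1},E_{n+1}^F\rangle=1$ one gets exactly $\mathcal{T}(\W\cap\{x_{n+1}\geq-\omega_0\})=(\W+\omega_0E_{n+1}^F)\cap\{x_{n+1}\geq0\}=\mathcal{C}_{\omega_0}$, as recorded in the definition $\mathcal{C}_{\omega_0}=\mathcal{T}(\W\cap\{\langle x,E_{n+1}\rangle\geq-\omega_0\})$ in the introduction. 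This proves (2).

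I expect the main obstacle to be part (1), specifically the careful verification that the spherical image of $\nu$ is \emph{exactly} $\mathcal{S}$ and not merely contained in it: the inclusion $\nu(\Sigma)\subset\mathcal{S}$ uses the capillary boundary condition in one direction, while the reverse inclusion requires an argument that for every $x$ with $\langle DF(x),E_{n+1}\rangle\geq-\omega_0$ there really is a point of $\Sigma$ with that outer normal — i.e. that $\widehat\Sigma$ ``fills up'' the convex region over all of $\mathcal{S}$. This is where convexity of $\widehat\Sigma$ together with the fact that $\partial\Sigma$ sits on the wall $\{x_{n+1}=0\}$ must be combined; one natural way is to argue by a supporting-hyperplane/continuity argument on the convex body $\widehat\Sigma\cup(\text{reflection})$ or directly via the diffeomorphism property of $\nu$ on the strictly convex closed surface and a boundary analysis at $\partial\Sigma$, tracking that the contact angle condition pins down precisely the boundary curve $\{\langle DF(x),E_{n+1}\rangle=-\omega_0\}=\partial\mathcal{S}$. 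The remaining steps (part (2) and the identification of $\mathcal{C}_{\omega_0}$) are then essentially formal, following the isotropic template in \cite{Xia-arxiv}.
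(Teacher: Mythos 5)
Your plan follows essentially the same strategy as the paper for part (1): show $\nu$ is a diffeomorphism onto its image and then identify the image as $\mathcal{S}$ using the capillary condition $\nu(\partial\Sigma)=\partial\mathcal{S}$ together with a connectedness/boundary argument (the paper pins this down by noting $E_{n+1}\in\mathcal{S}\cap\nu(\Sigma)$). For part (2) you take a slightly more direct route: rather than applying part (1) to $\mathcal{C}_{\omega_0}$ itself and writing $\widetilde{\nu_F}=(\nu^{\mathcal{C}_{\omega_0}})^{-1}\circ\nu$ as the paper does, you factor $\widetilde{\nu_F}=\mathcal{T}\circ\Psi\circ\nu$ through the diffeomorphism $\Psi:\mathbb{S}^n\to\W$ and check $\mathcal{T}(\Psi(\mathcal{S}))=\mathcal{C}_{\omega_0}$ directly; this is correct and, if anything, a little cleaner.

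The one step you under-justify is the opening claim that strict convexity of $\Sigma$ alone makes $\nu$ injective. For a \emph{closed} strictly convex hypersurface this is standard, but for a hypersurface with boundary local strict convexity does not by itself force global injectivity of the Gauss map; this is precisely why the paper first shows that $\partial\Sigma$ is a strictly convex closed hypersurface of $\partial\overline{\mathbb{R}^{n+1}_+}$ (via the relation $B_{\alpha\beta}=\sin\theta\, B^{\partial}_{\alpha\beta}$ from \cite{Wang-Weng-Xia}) and then invokes Ghomi's theorem, which upgrades injectivity of $\nu|_{\partial\Sigma}$ to a global diffeomorphism. In your setting the conclusion is still true because $\widehat\Sigma$ is assumed to be a convex \emph{body} (so each normal direction corresponds to a unique supporting hyperplane, and strict convexity rules out nontrivial faces meeting $\Sigma$), but you should say this explicitly rather than attribute it to strict convexity of $\Sigma$ alone. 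The surjectivity onto $\mathcal{S}$, which you correctly flag as the crux, is handled in the paper exactly by the connectedness argument you sketch, so no reflection or auxiliary convex body is needed.
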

	
	\begin{proof} (1) Since $\widehat{\Sigma}\in\mathcal{K}_{\omega_0}$, 
		following the approach established in  \cite[Corollary 2.4 (ii)]{Wang-Weng-Xia}, we obtain the relation $B_{\alpha\beta}(X)=\sin\theta(X)\cdot B^{\partial}_{\alpha\beta}(X)$ for $X\in\partial\Sigma$, where $\theta(X)$ is a function on $\partial\Sigma$ which satisfies $\<\nu(X),E_{n+1}\>=\cos\theta(X)$,  $e_{\alpha},e_{\beta}\in T_X(\partial\Sigma)$, $B$ and $B^{\partial}$ are the second fundamental form of $\Sigma\subset\overline{\R^{n+1}_+}$ and $\partial\Sigma \subset \mathbb{R}^n$ respectively. An argument parallel to that of \cite[Corollary 2.5]{Wang-Weng-Xia} shows that $\partial\Sigma\subset \partial\overline{\R^{n+1}_{+}}$ is a  strictly convex, closed  hypersurface. 
		
		 By \cite[Corollary 3.1 and Notes 3.2]{Ghomi}, we see that {the Gauss map $\nu$ of $\Sigma$ restricted to   $\p \Sigma$   is one-to-one} and furthermore, $\nu: \Sigma\to \nu(\Sigma)\subset \mathbb{S}^{n}$ is a diffeomorphism.
		 On the other hand,  it follows from \eqref{equ:w0-capillary} that $\nu(\partial \Sigma)=\partial\mathcal{S}$.
		  Also, since $\<DF(E_{n+1}),E_{n+1}\>=F(E_{n+1})>-\omega_0$, we have $E_{n+1}\in \mathcal{S}$ which lies in $\nu(\Sigma)$. It follows that $\nu(\Sigma)=\mathcal{S}$, then the assertion follows.
		
		  (2) Since  $\widehat{\mathcal{C}}_{\omega_0}\in\mathcal{K}_{\omega_0}$, by (1), the Gauss map $\nu^{\mathcal{C}_{\omega_0}}$ of $\mathcal{C}_{\omega_0}$ also has its image $\mathcal{S}$, and $\mathcal{C}_{\omega_0}=\mathcal{T}\(\W\cap\{x\in\mathbb{R}^{n+1}:\<x,E_{n+1}\>\geq-\omega_0\}\)\subset\mathcal{T}\W$ is diffeomorphic to its spherical image $\mathcal{S}$. This implies that $\(\nu^{\mathcal{C}_{\omega_0}}\)^{-1}\circ\nu: \Sigma\to\mathcal{C}_{\omega_0}$ is a diffeomorphic.
		
		 Denote by $\nu^{\W}$ the Gauss map of $\W$. Since $\nu^{\mathcal{C}_{\omega_0}}(\xi)=\nu^{\W}(\mathcal{T}^{-1}(\xi))$ for $\xi\in\mathcal{C}_{\omega_0}$ and $\nu_{{F}}(X)=DF(\nu(X))=\(\nu^{\W}\)^{-1}\circ\nu(X)$ for $X\in \Sigma$,  we have $\widetilde{\nu_{{F}}}=\mathcal{T}\circ\nu_{{F}}=\(\nu^{\mathcal{C}_{\omega_0}}\)^{-1}\circ\nu$. This completes the proof.
	\end{proof}

	From Lemma \ref{Lemma:gaussmap}, we can parametrize $\Sigma$ by the inverse anisotropic capillary Gauss map, i.e., $X:\mathcal{C}_{\omega_0}\to \Sigma$,  given by
	\begin{align}\label{equ:X(xi)=t-1VF}
		X(\xi)= \widetilde {\nu_F} ^{-1} (\xi) =\nu_F^{-1}\circ \mathcal{T}^{-1}(\xi)=\nu_F^{-1}(\xi-\omega_0E_{n+1}^F),
	\end{align}
	then $\nu_{{F}}(X(\xi))=\mathcal{T}^{-1}(\xi)$. The anisotropic support function of $\Sigma$ is given by
	\begin{eqnarray*}
		\hat{u}(X)=\frac{\<X,\nu(X)\>}{F(\nu(X))}=G(\nu_{{F}}(X))(X,\nu_{{F}}(X)).
	\end{eqnarray*}
	By the above parametrization, $\hat{s}(\xi)=\hat{u}(\widetilde{\nu_{{F}}}^{-1}(\xi))$ can be regarded as a function on $\xi\in\mathcal{C}_{\omega_0}$,
	\begin{align}\label{support}
		\hat{s}(\xi)
		 =&G(\mathcal{T}^{-1}(\xi))\(X(\xi),\mathcal{T}^{-1}(\xi)\)\nonumber
		\\=&G(\xi-\omega_0E_{n+1}^F)\(\widetilde{\nu_{{F}}}^{-1}(\xi),\xi-\omega_0E_{n+1}^F\)\nonumber
		\\=&\sup_{x\in\widehat{\Sigma}}G(\mathcal{T}^{-1}(\xi))\(\mathcal{T}^{-1}(\xi),x\).
	\end{align}
	{To distinguish with the anisotropic support function $\hat{u}$ of  ${\Sigma}$, we call $\hat{s}$ in \eqref{support} the \textbf{ anisotropic capillary support function} of $\Sigma$ (or of $\widehat{\Sigma}$).
		
		\begin{remark} In \cite{Ding-Gao-Li-arxiv,Arxiv}, $\bar{u}(X(\xi)):=\frac{\hat{s}(\xi)}{1+\omega_0G(\xi-\omega_0E_{n+1}^F)\(E_{n+1}^F,\xi-\omega_0E_{n+1}^F\)}$ for $\xi\in \mathcal{C}_{\omega_0}$, is called the anisotropic capillary support function of $\Sigma$, which has some properties such as  $\mathring{\nabla}_{\mu_F} \bar{u}=0$ on $\p\mathcal{C}_{\omega_0}$ with $\mu_{{F}}=A_F(\nu)\mu$, where  $\mu(X(\xi))$ is the unit outward co-normal of $X(\xi)\in\partial\Sigma\subset\Sigma$.
			So $\bar{u}$ is more suitable for this name.
		\end{remark}

		It is clear that  the anisotropic capillary Gauss map for $ \mathcal{C}_{\omega_0}$  is the identity map
		from $\mathcal{C}_{\omega_0} \to \mathcal{C}_{\omega_0}$ and	anisotropic capillary support function of $\mathcal{C}_{\omega_0}$ (or of $\widehat{\mathcal{C}}_{\omega_0}$) is
		\begin{align*}
			\hat{s}(\xi)=&G(\xi-\omega_0E_{n+1}^F)\(\xi  , \xi -\omega_0E_{n+1}^F\)
			\\
			=&
			G(\xi-\omega_0E_{n+1}^F)\(\xi-\omega_0E_{n+1}^F+\omega_0E_{n+1}^F  , \xi -\omega_0E_{n+1}^F\)
			\\
			=&1+\omega_0G(\xi-\omega_0E_{n+1}^F)\(E_{n+1}^F  , \xi -\omega_0E_{n+1}^F\).
		\end{align*}
		For simplicity, in the latter context, we denote
		\begin{eqnarray}\label{equ:s0}
			\hat{s}_o(\xi):=1+\omega_0G(\xi-\omega_0E_{n+1}^F)\(E_{n+1}^F  , \xi -\omega_0E_{n+1}^F\).
		\end{eqnarray} 

		Similar to \cite{Xia-arxiv}, we have the  following lemma for anisotropic setting.
		\begin{lemma}\label{lemma:Xia-Lemma2.4}
			For the parametrization $X:\left(\mathcal{C}_{\omega_0},\mathring{g},\mathring{\nabla }\right)\rightarrow\Sigma$ of anisotropic $\omega_0$-capillary convex body $\widehat{\Sigma}$, we have
			\begin{itemize}
			\item[(1)] $X(\xi)=\mathring{\nabla} \hat{s}(\xi) +\hat{s}(\xi)\mathcal{T}^{-1}(\xi)$.
				
			\item[(2)]  $\mathring{\nabla}_{\mu_F}\hat{s}=\frac{\omega_0}{F(\nu)\<\mu,E_{n+1}\>}\hat{s}$ along $\partial\mathcal{C}_{\omega_0}$. 
				
			\item[(3)] The anisotropic principal curvature radii of $\Sigma$ at $X(\xi)$ are given by the eigenvalues of $\tau_{ij}(\xi):=\mathring{\nabla}_{e_i}\mathring{\nabla}_{e_j}\hat{s}(\xi)+\mathring{g}_{i j} \hat{s}(\xi)-\frac{1}{2} Q_{i j k} \mathring{\nabla}_{e_k} \hat{s}(\xi)$  with respect to the metric $\mathring{g}$, where $\{e_i\}_{i=1}^n$ is an orthonormal frame on $(\mathcal{C}_{\omega_0},\mathring{g})$.  In particular, $(\tau_{ij})>0$ on $\mathcal{C}_{\omega_0}$.
		\end{itemize}	
	\end{lemma}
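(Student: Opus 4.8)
The plan is to mimic the isotropic argument of \cite{Xia-arxiv}*{Lemma 2.4}, but carried out in the Andrews--Xia metric $\mathring g$ on $\mathcal C_{\omega_0}$, so that the relevant geometry is that of the (translated) Wulff shape. First I would establish (1). Using the parametrization $X(\xi)=\widetilde{\nu_F}^{-1}(\xi)$ we have $\nu_F(X(\xi))=\mathcal T^{-1}(\xi)=\xi-\omega_0E_{n+1}^F$, and by definition $\hat s(\xi)=G(\mathcal T^{-1}(\xi))(X(\xi),\mathcal T^{-1}(\xi))$. Differentiating $\hat s$ tangentially along $\mathcal C_{\omega_0}$ and using the Weingarten formula $\partial_i\nu_F=\partial_i X$ of Lemma \ref{lemma:Gauss-Weingarten} together with the orthogonality relations $G(\nu_F)(\nu_F,V)=0$, $Q(\nu_F)(\nu_F,V,W)=0$ and the fact that $\partial_i X$ spans $T_X\Sigma=T_\xi\mathcal C_{\omega_0}$, I expect to recover that the tangential part of $X(\xi)$ equals $\mathring\nabla\hat s(\xi)$ while its normal part (with respect to $\mathring g$) equals $\hat s(\xi)\,\mathcal T^{-1}(\xi)$; this is the anisotropic analogue of the classical support-function reconstruction formula $X=\nabla^{\mathbb S}s+s\,\nu$. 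Care is needed because $\mathring g$ is not the round metric and the connection term in the Gauss formula carries the extra $-\tfrac12\mathring g^{kl}Q_{ijl}\partial_k X$ contribution.

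For (3) I would differentiate the identity in (1) once more. Applying $\mathring\nabla_{e_j}$ to $X=\mathring\nabla\hat s+\hat s\,\mathcal T^{-1}(\xi)$, substituting the Gauss formula of Lemma \ref{lemma:Gauss-Weingarten} for $\partial_i\partial_j X$, and decomposing into tangential and normal parts, the normal ($\mathcal T^{-1}(\xi)$-)component should yield exactly $\mathring h_{ij}=\mathring g_{ij}$ (trivially true on $\mathcal C_{\omega_0}$), while the tangential component produces the Weingarten-type relation whose coefficient matrix is $\tau_{ij}(\xi)=\mathring\nabla_{e_i}\mathring\nabla_{e_j}\hat s+\mathring g_{ij}\hat s-\tfrac12 Q_{ijk}\mathring\nabla_{e_k}\hat s$. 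Since $\partial_i X=\hat g^{jk}\hat h_{kj}\,\partial_i\nu_F$-type reasoning identifies $\tau_{ij}$ with the (anisotropic) reverse Weingarten map of $\Sigma$ pulled back via $\widetilde{\nu_F}$, its eigenvalues are precisely the anisotropic principal curvature radii. Strict convexity of $\Sigma$ (it is a strictly convex anisotropic $\omega_0$-capillary hypersurface by hypothesis) then forces $(\tau_{ij})>0$ on all of $\mathcal C_{\omega_0}$, using that $\widetilde{\nu_F}\colon\Sigma\to\mathcal C_{\omega_0}$ is a diffeomorphism by Lemma \ref{Lemma:gaussmap}.

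Part (2) is the capillary boundary condition and is where I expect the only real subtlety. Along $\partial\mathcal C_{\omega_0}$ the anisotropic capillary condition \eqref{equ:w0-capillary} reads $G(\nu_F)(\nu_F,-E_{n+1})=\langle\Psi(\nu),-E_{n+1}\rangle/\cdots$, i.e. $\langle\nu_F,E_{n+1}\rangle$ (equivalently $\langle\Psi(\nu),-E_{n+1}\rangle=\omega_0$) is constrained. I would restrict the formula from (1) to $\partial\mathcal C_{\omega_0}$ and pair it with $E_{n+1}$: since $X(\xi)\in\partial\Sigma\subset\partial\overline{\mathbb R^{n+1}_+}$ we have $\langle X(\xi),E_{n+1}\rangle=0$, so $0=\langle\mathring\nabla\hat s,E_{n+1}\rangle+\hat s\,\langle\mathcal T^{-1}(\xi),E_{n+1}\rangle=\langle\mathring\nabla\hat s,E_{n+1}\rangle+\hat s(\langle\xi,E_{n+1}\rangle-\omega_0)$, and on $\partial\mathcal C_{\omega_0}$ one has $\langle\xi,E_{n+1}\rangle=0$, giving $\langle\mathring\nabla\hat s,E_{n+1}\rangle=\omega_0\hat s$. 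The remaining work is to convert this Euclidean-inner-product statement into the intrinsic derivative $\mathring\nabla_{\mu_F}\hat s$: the outward $\mathring g$-conormal $\mu_F=A_F(\nu)\mu$ of $\partial\mathcal C_{\omega_0}$ in $\mathcal C_{\omega_0}$, being $\mathring g$-orthogonal to $\partial\mathcal C_{\omega_0}$ which lies in the hyperplane $\{x_{n+1}=0\}$, must be $\mathring g$-proportional to the $\mathring g$-gradient of the coordinate function $\langle\cdot,E_{n+1}\rangle$; computing that proportionality constant in terms of $F(\nu)$ and $\langle\mu,E_{n+1}\rangle$ (here I would use $G(\nu_F)(\nu_F,Y)=\langle Y,\nu\rangle/F(\nu)$ from \eqref{equ:G(vF,Y)=<v,Y>/F} and that $e_i=\partial_i X$ are $\mathring g$-orthonormal) yields the stated factor $\frac{\omega_0}{F(\nu)\langle\mu,E_{n+1}\rangle}$. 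The asymmetry of $A_F$ and the fact that $\mu_F\neq\mu$ in general are the bookkeeping hazards here, so I would be explicit about which metric each orthogonality is taken in.
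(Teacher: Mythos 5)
Your proposal is correct and follows essentially the same route as the paper: part (1) by differentiating $\hat{s}$ and using that $D_{e_i}X$ is $G(\mathcal{T}^{-1}(\xi))$-tangential while $\mathcal{T}^{-1}(\xi)$ is the anisotropic normal; part (2) by pairing the reconstruction formula with $E_{n+1}$ on the boundary where $\<X,E_{n+1}\>=0$ and $\<\mathcal{T}^{-1}(\xi),E_{n+1}\>=-\omega_0$; part (3) by a second differentiation combined with the Gauss--Weingarten formulas to identify $\tau_{ij}$ with the inverse anisotropic Weingarten map. The only small correction: the conversion in (2) uses the conormal identity $G(\nu_F)(Y,\mu_F)=\<Y,\mu\>/F(\nu)$ for $Y\in T\Sigma$ (from \cite{Ding-Gao-Li-arxiv}, eq.\ (3.29)) together with the decomposition $E_{n+1}=\<E_{n+1},\nu\>\nu+\<E_{n+1},\mu\>\mu$, rather than the normal-slot identity \eqref{equ:G(vF,Y)=<v,Y>/F} you cite, but this is exactly the bookkeeping point you flagged.
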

	
	\begin{proof}
	 (1) From \eqref{support}, for $\xi\in\mathcal{C}_{\omega_0}$, we have
	 		\begin{align}
				\mathring{\nabla}_{e_i} \hat{s}(\xi)=&G(\mathcal{T}^{-1}(\xi))\(D_{e_i} X, \xi -\omega_0E_{n+1}^F\)+G(\mathcal{T}^{-1}(\xi))\(X, e_i\)\nonumber
				\\
				&+Q(\mathcal{T}^{-1}(\xi))\(D_{e_i}(\mathcal{T}^{-1}(\xi)),\mathcal{T}^{-1}(\xi),X\)\nonumber
				\\
				=&G(\mathcal{T}^{-1}(\xi))\(X, e_i\), \quad 1\leq i\leq n.
				\label{equ:Ds}
			\end{align}
			Here the second equality follows from the fact that $Q(z)(z,\cdot,\cdot)=0$ and the fact that $D_{e_i} X$ is tangential to $\Sigma$ under the anisotropic metric $G(\mathcal{T}^{-1}(\xi))(\cdot,\cdot)$ in $T_{\xi}\mathbb{R}^{n+1}$, while $\mathcal{T}^{-1}(\xi)=\xi -\omega_0E_{n+1}^F=\nu_{{F}}(X(\xi))$ is anisotropic normal to $\Sigma$. The first assertion follows.
			
		(2) By the anisotropic capillary boundary condition, we have
		\begin{align*}
			-\omega_0=\<\nu_{{F}}(X(\xi)),E_{n+1}\>=\<\mathcal{T}^{-1}(\xi),E_{n+1}\>,\quad\xi\in\partial\mathcal{C}_{\omega_0}.
		\end{align*}
		Then for $\xi\in\partial\mathcal{C}_{\omega_0}$, we have
		\begin{align*}
			0=&\<X,E_{n+1}\>\overset{(1)}{=}\<\mathring{\nabla} \hat{s}(\xi) +\hat{s}(\xi)\mathcal{T}^{-1}(\xi),E_{n+1}\>
			\\
			=&\<\mathring{\nabla} \hat{s}(\xi),\nu\>\cdot\<E_{n+1},\nu\>+\<\mathring{\nabla} \hat{s}(\xi),\mu\>\cdot\<E_{n+1},\mu\>-\omega_0\hat{s}(\xi)
			\\
			=&\mathring{g}\(\mathring{\nabla}\hat{s},\mu_{{F}}\)\cdot F(\nu)\<E_{n+1},\mu\>-\omega_0\hat{s},
		\end{align*}
		where we use $E_{n+1}=\<E_{n+1},\nu\>\nu+\<E_{n+1},\mu\>\mu$ (see \cite{Jia-Wang-Xia-Zhang2023}), $\mathring{\nabla}\hat{s}\in T{\mathcal{C}_{\omega_0}}=T\W=T\Sigma$, and the fact that $G(\nu_{{F}})(Y,\mu_{{F}})=\frac{\<Y,\mu\>}{F(\nu)}$ for $\forall Y\in T\Sigma$ (see \cite[eq. (3.29)]{Ding-Gao-Li-arxiv}). The second assertion follows.
			
		(3)	
		By \eqref{equ:Ds}, Lemma \ref{lemma:Gauss-Weingarten}, and  \eqref{equ:Gauss-formula}, for $\xi\in\partial\mathcal{C}_{\omega_0}$, we have at the normal coordinate of $\mathring{g}$
		\begin{align*}
			&\mathring{\nabla}_{e_i}\mathring{\nabla}_{e_j}\hat{s}(\xi)=e_i\left(G(\mathcal{T}^{-1}(\xi))\(D_{e_j} \xi, X\)\right) \\
			= & G(\mathcal{T}^{-1}(\xi))\(D_{e_i}D_{ e_j }\xi, X(\xi)\)+G(\mathcal{T}^{-1}(\xi))\(D_{e_j}( \mathcal{T}^{-1}(\xi)),D_{ e_i} X\)+Q(\mathcal{T}^{-1}(\xi))\(D_{ e_i} \xi, D_{e_j} \xi, X(\xi)\) \\
			= & -\delta_{i j} G(\mathcal{T}^{-1}(\xi))\(\mathcal{T}^{-1}(\xi), X(\xi)\)-\frac{1}{2} Q(\mathcal{T}^{-1}(\xi))\(D_{ e_i} \xi, D_{e_j} \xi, D_{e_k} \xi\) G(\mathcal{T}^{-1}(\xi))\(e_k, X(\xi)\) \\
			& \quad+G(\mathcal{T}^{-1}(\xi))\(\mathcal{T}^{-1}(\xi),\hat{h}_{i j}(X) \mathcal{T}^{-1}(\xi)\)+Q(\mathcal{T}^{-1}(\xi))\(D_{ e_i} \xi, D_{e_j} \xi, D_{e_k} \xi\) G(\mathcal{T}^{-1}(\xi))\(e_k, X(\xi)\) \\
			 =&-\delta_{i j} \hat{s}(\xi)+\hat{h}_{i j}(X(\xi))+\frac{1}{2} Q_{i j k} \mathring{\nabla}_{e_k} \hat{s}(\xi).
		\end{align*}
			
		Hence the anisotropic second fundamental form $\hat{h}_{ij}(X(\xi))$ is given by
		$$\hat{h}_{ij}(X(\xi))=\mathring{\nabla}_{e_i}\mathring{\nabla}_{e_j}\hat{s}(\xi)+\delta_{i j} \hat{s}(\xi)-\frac{1}{2} Q_{i j k} \mathring{\nabla}_{e_k} \hat{s}(\xi).$$
			By the Weingarten formula $D_{ e_i}\xi=D_{ e_i}(\mathcal{T}^{-1}(\xi))=D_{ e_i}\nu_{{F}}(\xi)=\hat{g}^{kl}\hat{h}_{ik}\cdot D_{e_l}X$, we see
			\begin{eqnarray*}
				\mathring{g}_{ij}&=&G(\mathcal{T}^{-1}(\xi))\(D_{ e_i}\xi, D_{e_j}\xi\)
				\\&=&G(\mathcal{T}^{-1}(\xi))\(\hat{g}^{kl}\hat{h}_{ik}\cdot D_{e_l}X, \hat{g}^{pq}\hat{h}_{jp}\cdot D_{e_q}X\)=\hat{h}_{ik}\hat{h}_{jp}\hat{g}^{kp}.
			\end{eqnarray*}
			Hence \begin{eqnarray*}
				\hat{g}_{kp}=\mathring{g}^{ij}\hat{h}_{ik}\hat{h}_{jp}.
			\end{eqnarray*}
			It follows that the principal curvature radii, i.e., the eigenvalues of $\left(\hat{g}_{kp}\hat{h}^{jp}\right)$, are given by the eigenvalues of $\(\mathring{\nabla}_{e_i}\mathring{\nabla}_{e_j}\hat{s}(\xi)+\mathring{g}_{i j} \hat{s}(\xi)-\frac{1}{2} Q_{i j k} \mathring{\nabla}_{e_k} \hat{s}(\xi)\)$ with respect to the metric $\mathring{g}$. Here $\hat{h}^{ij}$ means the inverse of $\hat{h}_{ij}$. The third assertion follows. 		
		\end{proof}		
		\begin{definition}\label{defn-convex capillary fun}
			For $\omega_0\in (-F(E_{n+1}),F(-E_{n+1}))$, a function $f\in  C^{2}(\mathcal{C}_{\omega_0})$  is called an \textbf{anisotropic capillary function} if it satisfies the following   boundary condition
			\begin{eqnarray}
				\label{robin}
				\mathring{\nabla}_{\mu_F} f=\frac{\omega_0}{F(\nu)\<\mu,E_{n+1}\>} f\quad \text{on}\quad \partial \mathcal{C}_{\omega_0}.
			\end{eqnarray}	
			If in addition, $\tau[f](\xi)=\(\tau_{ij}[f](\xi)\):=\(\mathring{\nabla}_{e_i}\mathring{\nabla}_{e_j}f(\xi)+\mathring{g}_{i j} f(\xi)-\frac{1}{2} Q_{i j k} \mathring{\nabla}_{e_k} f(\xi)\)>0$ on $\mathcal{C}_{\omega_0}$, $f$ is called an \textbf{anisotropic capillary convex function} on $\mathcal{C}_{\omega_0}$.
		\end{definition}
		Similar to the isotropic case in \cite[Proposition 2.6]{Xia-arxiv}, the next proposition shows that an anisotropic capillary convex function on $\mathcal{C}_{\omega_0}$  yields an anisotropic  $\omega_0$-capillary convex body in $\overline{\R^{n+1}_+}$, and vice versa. This can be compared to the one-to-one correspondence of convex bodies in ${\R^{n+1}}$ and convex functions on Wulff shape $\W$ (\cite{Xia13}).
		
		\begin{proposition}\label{prop:xia-prpo2.6}
			Let $\hat{s}\in C^{2}(\mathcal{C}_{\omega_0})$. Then $\hat{s}$ is an anisotropic capillary convex function if and only if $\hat{s}$ is the anisotropic capillary support function of an anisotropic capillary convex body $\widehat{\Sigma}\in \mathcal{K}_{\omega_0}$. 
		\end{proposition}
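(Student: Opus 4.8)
The plan is to prove both directions of the equivalence by carefully tracking the relationship between the anisotropic capillary support function $\hat s$ on $\mathcal C_{\omega_0}$ and the geometry of the hypersurface $\Sigma$ it should generate. The hard part will be the reverse direction: showing that an abstract $C^2$ function satisfying the Robin condition \eqref{robin} and the positivity $\tau[f]>0$ actually arises as the support function of an embedded, strictly convex, anisotropic $\omega_0$-capillary hypersurface, rather than merely a Legendre-type graph.

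\medskip

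For the forward direction (support function $\Rightarrow$ convex function), suppose $\hat s$ is the anisotropic capillary support function of some $\widehat\Sigma\in\mathcal K_{\omega_0}$. Then Lemma \ref{lemma:Xia-Lemma2.4}(2) is exactly the boundary condition \eqref{robin}, and Lemma \ref{lemma:Xia-Lemma2.4}(3) says that $\tau_{ij}[\hat s](\xi)$ equals the matrix whose eigenvalues are the anisotropic principal curvature radii of $\Sigma$ at $X(\xi)$; since $\widehat\Sigma$ is (strictly) convex these radii are positive, hence $\tau[\hat s]>0$ on $\mathcal C_{\omega_0}$. Thus $\hat s$ is an anisotropic capillary convex function by Definition \ref{defn-convex capillary fun}, with essentially no additional work.

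\medskip

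For the reverse direction, given an anisotropic capillary convex function $f$, I would \emph{define} a map $X:\mathcal C_{\omega_0}\to\mathbb R^{n+1}$ by the formula in Lemma \ref{lemma:Xia-Lemma2.4}(1), namely $X(\xi)=\mathring\nabla f(\xi)+f(\xi)\,\mathcal T^{-1}(\xi)$. The first task is to verify that this $X$ is an immersion whose image $\Sigma$ is a strictly convex hypersurface: differentiating $X$ and using the Gauss--Weingarten formulas of Lemma \ref{lemma:Gauss-Weingarten} (the identities $\partial_i\partial_jX$ and $\partial_i\nu_F=\partial_iX$ for $\mathcal C_{\omega_0}$) one computes $D_{e_i}X$ and finds that, up to the anisotropic metric, $dX$ is given by the positive-definite matrix $\tau_{ij}[f]$, which is invertible by hypothesis; this also shows that $\mathcal T^{-1}(\xi)$ is the (anisotropic) outward normal to $\Sigma$ at $X(\xi)$ and that the anisotropic principal curvature radii of $\Sigma$ are the eigenvalues of $\tau[f]$, hence positive, giving strict convexity. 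One then checks embeddedness: since $\widetilde{\nu_F}=\mathcal T\circ\nu_F$ will by construction be the inverse of $\xi\mapsto X(\xi)$, and $\mathcal C_{\omega_0}$ is diffeomorphic to the spherical cap $\mathcal S$ (Lemma \ref{Lemma:gaussmap}), the map $X$ is a diffeomorphism onto its image. Finally, one verifies the capillary boundary condition: for $\xi\in\partial\mathcal C_{\omega_0}$ one has $\langle\mathcal T^{-1}(\xi),E_{n+1}\rangle=-\omega_0$ (this characterizes $\partial\mathcal C_{\omega_0}$), and running the computation in the proof of Lemma \ref{lemma:Xia-Lemma2.4}(2) in reverse, the Robin condition \eqref{robin} on $f$ forces $\langle X(\xi),E_{n+1}\rangle=0$, i.e. $\partial\Sigma\subset\partial\overline{\mathbb R^{n+1}_+}$, while $\nu_F(X(\xi))=\mathcal T^{-1}(\xi)\in\mathcal T^{-1}(\partial\mathcal C_{\omega_0})\subset\mathcal W$ automatically satisfies $\langle\Psi(\nu),-E_{n+1}\rangle=\omega_0$. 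Therefore $\widehat\Sigma:=$ the region bounded by $\Sigma$ and $\partial\overline{\mathbb R^{n+1}_+}$ lies in $\mathcal K_{\omega_0}$, and by Lemma \ref{lemma:Xia-Lemma2.4}(1) its anisotropic capillary support function is exactly $f$.

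\medskip

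The main obstacle, as flagged by the authors, is the asymmetry of the anisotropic Weingarten operator and the presence of the $Q$-tensor terms: unlike the isotropic case of \cite[Proposition 2.6]{Xia-arxiv}, one cannot directly invoke the classical convex-body correspondence on $\mathbb S^n$, and must instead work throughout with the metric $\mathring g$ and connection $\mathring\nabla$ on $\mathcal C_{\omega_0}$, checking that all the third-order $Q$-terms cancel exactly as in the computation displayed in the proof of Lemma \ref{lemma:Xia-Lemma2.4}(3). The key algebraic input making this work is the Codazzi-type symmetry $\mathring\nabla_iQ_{jkl}=\mathring\nabla_jQ_{ikl}$ from Lemma \ref{lemma:Gauss-Weingarten}, which ensures that the would-be torsion contributions to $D_{e_i}X$ are symmetric and assemble into the symmetric positive-definite form $\tau[f]$.
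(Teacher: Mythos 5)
Your forward direction and the overall architecture of the reverse direction match the paper's proof: define $X(\xi)=\mathring\nabla f(\xi)+f(\xi)\mathcal T^{-1}(\xi)$, compute $D_{e_i}X=\tau_{ij}[f]e_j$ from Lemma \ref{lemma:Gauss-Weingarten} to get an immersion with anisotropic normal $\mathcal T^{-1}(\xi)$ and principal curvature radii the eigenvalues of $\tau[f]$, use the Robin condition to place $X(\partial\mathcal C_{\omega_0})$ in $\partial\overline{\mathbb R^{n+1}_+}$, verify $\langle\nu_F(X(\xi)),E_{n+1}\rangle=-\omega_0$, and conclude embeddedness via Lemma \ref{Lemma:gaussmap}(2).

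However, there is a genuine gap: you never verify that $X(\mathcal C_{\omega_0})$ actually lies in $\overline{\mathbb R^{n+1}_+}$, i.e.\ that $\langle X(\xi),E_{n+1}\rangle>0$ for every $\xi\in\mathrm{int}(\mathcal C_{\omega_0})$. The boundary computation only shows the height function vanishes on $\partial\mathcal C_{\omega_0}$; a priori the interior of the image could dip below the hyperplane $\{x_{n+1}=0\}$, in which case $\widehat\Sigma$ would not be an anisotropic $\omega_0$-capillary convex body in the sense required for membership in $\mathcal K_{\omega_0}$. The paper closes this gap with a dedicated argument: for an interior point $\xi_0$ it constructs an integral curve $\gamma(t)$ of the unit co-normals $\mu_t$ of the horizontal level sets $\{x\in\mathcal C_{\omega_0}:\langle x,E_{n+1}\rangle=\langle\gamma(t),E_{n+1}\rangle\}$, running from $\xi_0$ to $\partial\mathcal C_{\omega_0}$ with $\langle\mu_t,E_{n+1}\rangle<0$, and then writes
\begin{equation*}
\langle X(\xi_0),E_{n+1}\rangle=-\int_0^a \tau[f]\bigl(\dot\gamma(t),\dot\gamma(t)\bigr)\,\langle E_{n+1},\dot\gamma(t)\rangle\,dt>0,
\end{equation*}
using precisely $\tau[f]>0$ together with the sign of $\langle E_{n+1},\dot\gamma(t)\rangle$. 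This step is not a formality and cannot be absorbed into the immersion or boundary computations; your proof needs it (or an equivalent argument) to conclude $\widehat\Sigma\in\mathcal K_{\omega_0}$. As a minor remark, your closing paragraph attributes the symmetry of $\tau[f]$ to the Codazzi-type identity $\mathring\nabla_iQ_{jkl}=\mathring\nabla_jQ_{ikl}$; in fact it follows already from the total symmetry of $Q$ and the symmetry of $\mathring\nabla^2f$, and the Codazzi identity plays no role in this proposition.
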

		\begin{proof}
			We have already seen from Lemma \ref{lemma:Xia-Lemma2.4} that
			if $\hat{s}\in C^{2}(\mathcal{C}_{\omega_0})$ is the anisotropic capillary support function of an anisotropic  capillary convex hypersurface $\Sigma\subset \overline{\R^{n+1}_+}$, then  $\hat{s}$ is an anisotropic   capillary convex function.
			It remains to show the converse.
			
			Let $\hat{s}\in C^{2}(\mathcal{C}_{\omega_0})$ be an anisotropic  capillary convex function. Consider the hypersurface $\Sigma$ given by the map
			\begin{eqnarray*}
				X: &&\mathcal{C}_{\omega_0}\to\R^{n+1},\\
				&&X(\xi)=\mathring{\nabla} \hat{s}(\xi)+\hat{s}(\xi) \mathcal{T}^{-1}(\xi).
			\end{eqnarray*}
			
			For  $\xi\in\mathcal{C}_{\omega_0}$, we have $T_{\xi}\mathcal{C}_{\omega_0}=T_{\mathcal{T}^{-1}(\xi)}\W$, then $\mathcal{T}^{-1}(\xi)$ is the anisotropic normal of $\mathcal{C}_{\omega_0}$. It is easy to see $\<\xi, E_{n+1}\>=0$ for $\xi\in\mathcal{C}_{\omega_0}$, then $\<\mathcal{T}^{-1}(\xi),E_{n+1}\>=-\omega_0$ for $\xi\in\mathcal{C}_{\omega_0}$.
			It follows by the decomposition $E_{n+1}=\<E_{n+1},\nu\>\nu+\<E_{n+1},\mu\>\mu$, that
			\begin{align*}
				\<X(\xi), E_{n+1}\>=&\<\mathring{\nabla} \hat{s}(\xi)+\hat{s}(\xi)\mathcal{T}^{-1}(\xi), E_{n+1}\>
				= \<\mathring{\nabla}\hat{s},\mu\>\cdot  \< \mu, E_{n+1}\>-\omega_0\hat{s}(\xi)
				\\
				=&G(\mathcal{T}^{-1}\xi)\(\mathring{\nabla}\hat{s},\mu_{{F}}\)F(\nu)\<\mu,E_{n+1}\>-\omega_0\hat{s}(\xi)
				\\
				\overset{\eqref{robin}}{=}&0, \quad\xi\in\partial\mathcal{C}_{\omega_0},
			\end{align*}
			which implies $X(\p \mathcal{C}_{\omega_0})=\partial\Sigma\subset \partial \overline{\mathbb{R}^{n+1}_{+}}$.
			
			For   any tangent vector field $V=V^ie_i$ on $\mathcal{C}_{\omega_0}$, with normal orthonormal frame $\{e_i\}_{i=1}^n$ on $(\mathcal{C}_{\omega_0},\mathring{g})$,   by Lemma \ref{lemma:Gauss-Weingarten},  we obtain
			\begin{align}\label{derivative}
				D_{V}X(\xi)=&D_{V}(\mathring{\nabla} \hat{s}+\hat{s} \mathcal{T}^{-1}(\xi))
				=V^ie_i\left(e_j(\hat{s})\cdot e_j\right)+V^ie_i\left(\hat{s}\cdot \mathcal{T}^{-1}(\xi)\right)
				\nonumber
				\\=& {V}^i\cdot e_ie_j (\hat{s}) - V^i\cdot e_i(\hat{s})\cdot  \mathcal{T}^{-1}(\xi)
				-\frac{1}{2}V^i\cdot e_j(\hat{s})\cdot  Q_{i j k}\cdot  e_k \nonumber
				\\
				& + V^i\cdot e_i(\hat{s}) \cdot \mathcal{T}^{-1}(\xi)+ \hat{s}(\xi)V\nonumber\\=&\left.\(\mathring{\nabla}^2 \hat{s}+\hat{s}\mathring{g}-\frac{1}{2}Q(\mathcal{T}^{-1}(\xi))(\cdot,\cdot,\mathring{\nabla}\hat{s})\)\right|_{\xi}(V),\quad \forall\xi\in\mathcal{C}_{\omega_0}.	
				 	\end{align}
			Since  $(\tau_{ij}[\hat{s}])>0$, we see $X$ is an immersion.
			Also from \eqref{derivative}, we see that  at $\xi$,
			$$T_{X(\xi)}\Sigma={\rm span}\{D_{e_i}X(\xi)\}_{i=1}^n ={\rm span}\{e_i\}_{i=1}^n .$$
			It follows that
			$\nu_F(X(\xi))=\mathcal{T}^{-1}(\xi)=\xi-\omega_0E_{n+1}^F\in\W$ is the anisotropic normal to $\Sigma$ at $X(\xi)$.
			
			On the other hand, for $\xi\in {\rm int}(\mathcal{C}_{\omega_0})$, we claim that $\<X(\xi), E_{n+1}\>>0$ for each $\xi\in {\rm int}(\mathcal{C}_{\omega_0})$. To see this, 
			for any $\xi_0\in  {\rm int}(\mathcal{C}_{\omega_0})$, assume $\mu_0$ is the unit co-normal  of $\xi_0\in\{x\in\mathcal{C}_{\omega_0}:\<x,E_{n+1}\>=\<\xi_0,E_{n+1}\>\}=\(\mathcal{C}_{\omega_0}\cap\{x_{n+1}=\<\xi_0,E_{n+1}\>\}\)\subset\mathcal{C}_{\omega_0}$ which satisfies $\<\mu_0,E_{n+1}\><0$ (if $E_{n+1}=\nu(\xi)$, take  $\mu_0$ as any unit vector satisfies $\<\mu_0,E_{n+1}\>= 0$). Then we take an arc-length parametrized  curve $\gamma(t):[0,a]\to \mathcal{C}_{\omega_0}$ (see Fig. \ref{fig:1}), which satisfies
			\begin{equation*}
				\renewcommand{\arraystretch}{1.5}
				\left\{
				\begin{array}{ll}
					 \gamma(a)=\xi_a\in\partial\mathcal{C}_{\omega_0},\quad \gamma(0)=\xi_0;
					\\
					\dot{\gamma}(t)=\mu_t;
					\\
					\<\mu_t,E_{n+1}\><0, \,\text{for}\, t\in(0,a].
				\end{array}	
				\right.
			\end{equation*}		
Here $\mu_t$ is the unit co-normal  of $\{x\in\mathcal{C}_{\omega_0}:\<x,E_{n+1}\>=\<\gamma(t),E_{n+1}\>\}\subset\mathcal{C}_{\omega_0}$ at point $\gamma(t)$. In fact $\gamma(t)$ is an  integral curve of $\mu_t$.			
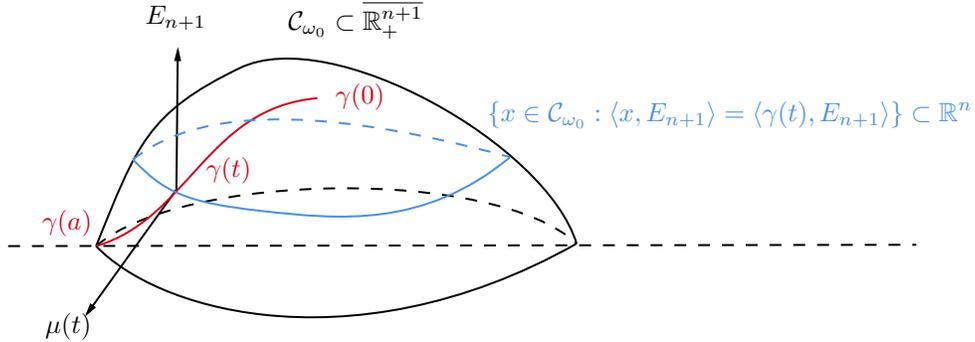
\begin{figure}[h]
				\centering				
				\caption{Curve $\gamma$ and unit co-normal $\mu(t)$}
				\label{fig:1}\hspace{3mm}
			\begin{tikzpicture}[x=0.75pt,y=0.75pt,yscale=-1,xscale=1]
				
				\draw  [dash pattern={on 4.5pt off 4.5pt}]  (72.2,172.26) -- (530.2,171.6) ;
				\draw    (116.56,172.26) .. controls (137.11,121.09) and (142.52,104.86) .. (189.04,82.39) .. controls (235.57,59.93) and (343.75,121.09) .. (358.9,171.01) ;
				\draw    (116.56,172.26) .. controls (132.17,188.52) and (162.3,202.62) .. (200.97,206.89) .. controls (245.39,211.79) and (301.06,203.71) .. (358.9,171.01) ;
				\draw  [dash pattern={on 4.5pt off 4.5pt}]  (116.56,172.26) .. controls (159.83,134.82) and (317.79,132.32) .. (358.9,171.01) ;
				\draw    (157.08,143.72) -- (112.04,206.35) ;
				\draw [shift={(110.87,207.97)}, rotate = 305.73] [fill={rgb, 255:red, 0; green, 0; blue, 0 }  ][line width=0.08]  [draw opacity=0] (7.2,-1.8) -- (0,0) -- (7.2,1.8) -- cycle    ;
				\draw    (157.08,143.72) -- (157.76,73.68) ;
				\draw [shift={(157.78,71.68)}, rotate = 90.56] [fill={rgb, 255:red, 0; green, 0; blue, 0 }  ][line width=0.08]  [draw opacity=0] (7.2,-1.8) -- (0,0) -- (7.2,1.8) -- cycle    ;
				\draw [color={rgb, 255:red, 208; green, 2; blue, 27 }  ,draw opacity=1 ]   (116.56,172.26) .. controls (163.7,158.36) and (172.16,102.55) .. (228.16,97.59) ;
				\draw [color={rgb, 255:red, 74; green, 144; blue, 226 }  ,draw opacity=1 ]   (135.17,128.59) .. controls (150.78,144.86) and (161.53,150.29) .. (200.19,154.55) .. controls (238.86,158.81) and (282.05,163.32) .. (325.37,127.35) ;
				\draw [color={rgb, 255:red, 74; green, 144; blue, 226 }  ,draw opacity=1 ] [dash pattern={on 4.5pt off 4.5pt}]  (135.17,128.59) .. controls (179.55,92.63) and (270.42,112.47) .. (325.37,127.35) ;
				
				\draw (89.41,204.64) node [anchor=north west][inner sep=0.75pt]   [align=left] {$\displaystyle \mu ( t)$};
				\draw (398.42,150.15) node [anchor=north west][inner sep=0.75pt]   [align=left] {};
				\draw (312.44,97.72) node [anchor=north west][inner sep=0.75pt]  [color={rgb, 255:red, 74; green, 144; blue, 226 }  ,opacity=1 ] [align=left] {$\displaystyle \{x\in\mathcal{C}_{\omega_0}:\<x,E_{n+1}\>=\<\gamma(t),E_{n+1}\>\}\subset\mathbb{R}^n$};
				\draw (139.92,49.21) node [anchor=north west][inner sep=0.75pt]   [align=left] {$\displaystyle E_{n+1}$};
				\draw (212.14,47.18) node [anchor=north west][inner sep=0.75pt]   [align=left] {$\displaystyle \mathcal{C}_{\omega_0}\subset \overline{\mathbb{R}_{+}^{
						n+1}}$};
				\draw (170.54,125.65) node [anchor=north west][inner sep=0.75pt]  [color={rgb, 255:red, 208; green, 2; blue, 27 }  ,opacity=1 ] [align=left] {$\gamma(t)$};
				\draw (235.87,88.49) node [anchor=north west][inner sep=0.75pt]  [color={rgb, 255:red, 208; green, 2; blue, 27 }  ,opacity=1 ] [align=left] {$\gamma(0)$};
				\draw (87.92,152.33) node [anchor=north west][inner sep=0.75pt]  [color={rgb, 255:red, 208; green, 2; blue, 27 }  ,opacity=1 ] [align=left] {$\gamma(a)$};

			\end{tikzpicture}
		\end{figure}

			
			Denote by $\nu_t$ the unit outer normal of $\mathcal{C}_{\omega_0}\in\mathbb{R}^{n+1}_+$ at point $\gamma(t)$ for $t\in(0,a]$, and write $\partial\mathcal{C}^t=\{x\in\mathcal{C}_{\omega_0}:\<x,E_{n+1}\>=\<\gamma(t),E_{n+1}\>\}$. Since    vectors $\nu_t,\mu_t,E_{n+1} \in \mathbb{R}^{n+1}$ are all orthogonal to an $(n-1)$-dimensional linear space $T(\partial\mathcal{C}^t)$, then $\nu_t,\mu_t$ and $E_{n+1}$ are on the same plane for $t\in(0,a]$.
			Obviously, $\<\nu_t,\mu_t\>=0$ and $\dot{\gamma}(t)=\mu_t$, and there is the decomposition $E_{n+1}=\<E_{n+1},\dot{\gamma}(t)\>\dot{\gamma}(t)+\<E_{n+1},\nu_t\>\nu_t$ for $t\in [0,a]$.
			 Then we can calculate
			\begin{align*}
			&	\<X(\xi_0), E_{n+1}\>=\<X(\xi_0), E_{n+1}\>-\<X(\xi_a), E_{n+1}\>
				\\=-&\int_0^a\<D_{\dot{\g}(t)}X(\g(t)), E_{n+1}\>dt
				\\=-&\int_0^a \left.\(\mathring{\nabla}^2 \hat{s}+\hat{s}\mathring{g}-\frac{1}{2}Q(\mathcal{T}^{-1}\xi)(\cdot,\cdot,\mathring{\nabla}\hat{s})\)\right|_{\gamma(t)}\(\dot{\gamma}(t), \dot{\gamma}(t)\)\cdot \< E_{n+1}, \dot{\gamma}(t)\>  dt.
			\end{align*}
			Since $(\tau_{ij}[\hat{s}])>0$ and $\< E_{n+1}, \dot{\g}(t)\><0$ for $t\in (0,a ]$, we obtain that $\<X(\xi_0), E_{n+1}\>>0$.
			Hence $X(\mathcal{C}_{\omega_0})\subset \overline{\R^{n+1}_+}$.
			
			Next, we shall prove $X(\mathcal{C}_{\omega_0})=\Sigma\subset \overline{\R^{n+1}_+}$ is a (locally) strictly convex and anisotropic capillary hypersurface.
			 In fact,
			$$\<\nu_F(X(\xi)), E_{n+1}\>=\<\mathcal{T}^{-1}(\xi), E_{n+1}\>=-\omega_0\quad \hbox{for}~\xi\in \p \mathcal{C}_{\omega_0},$$
			which implies that $X(\mathcal{C}_{\omega_0})$ is an anisotropic $\omega_0$-capillary hypersurface.
		By \eqref{derivative}, we have $D_{e_i}X=\sum_{j=1}^{n}\tau_{ij}[\hat{s}]e_j$, then $$D_{e_i}\nu_F=D_{e_i}(\xi-\omega_0E_{n+1}^F)=\(\mathring{\nabla}^2 \hat{s}+\hat{s}\mathring{g}-\frac{1}{2}Q(\mathcal{T}^{-1}(\xi))(\cdot,\cdot,\mathring{\nabla}\hat{s})\)^{-1} \cdot D_{e_j}X.$$
			Hence the eigenvalues of $\left.\(\mathring{\nabla}^2 \hat{s}+\hat{s}\mathring{g}-\frac{1}{2}Q(\mathcal{T}^{-1}(\xi))(\cdot,\cdot,\mathring{\nabla}\hat{s})\)^{-1}\right|_{\xi}$ gives the anisotropic principal curvature of $\Sigma$ at $X(\xi)$. Thus $X: \mathcal{C}_{\omega_0} \to \overline{\R^{n+1}_+}$ is strictly convex.
			
			Finally, we prove that $X: \mathcal{C}_{\omega_0}\to \overline{\R^{n+1}_+}$ is an embedding. For this, we look at the anisotopic capillary Gauss map $\widetilde{\nu_{{F}}}: \Sigma\to \mathcal{C}_{\omega_0}$. From the strictly convexity and anisotropic capillarity of $\Sigma$, we know from 
			Lemma  \ref{Lemma:gaussmap} (2)
			that the restriction of the anisotropic capillary Gauss map $\widetilde{\nu_{{F}}}$ of $\Sigma$ to its boundary  $\p \Sigma$ is one-to-one and furthermore, $\widetilde{\nu_{{F}}}$ is a diffeomorphism on $\Sigma$. 
			On the other hand,
			$\widetilde{\nu_{{F}}}(\Sigma)= \mathcal{C}_{\omega_0}$, and hence $X(\mathcal{C}_{\omega_0})=\Sigma=\widetilde{\nu_F}^{-1}(\mathcal{C}_{\omega_0})$ is an embedding.		
			This completes the proof.
		\end{proof}

\subsection{Mixed volume  for anisotropic capillary convex bodies}

 \begin{proposition}\label{prop:sK=sumSi}
 	Let $K_i \in \mathcal{K}_{\omega_0}, i=1, \cdots, m$. Then the Minkowski sum $K:=\sum_{i=1}^m \lambda_i K_i \in$ $\mathcal{K}_{\omega_0}$, where $\sum_{i=1}^m \lambda_i>0$ and $\lambda_1, \cdots, \lambda_m \geq 0$. Moreover,
\begin{align}
	\label{equ:sK=sumSi}
	\hat{s}_K=\sum_{i=1}^m \lambda_i \hat{s}_{K_i},
\end{align}
where $\hat{s}_K$ and $\hat{s}_{K_i}$ are the anisotropic capillary support functions of $K$ and $K_i$ respectively.
 \end{proposition}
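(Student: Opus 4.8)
The plan is first to reduce everything to the single claim that $K:=\sum_{i=1}^m\lambda_i K_i$ lies in $\mathcal{K}_{\omega_0}$. Granting this, \eqref{equ:sK=sumSi} is essentially automatic: by \eqref{support} together with \eqref{equ:G(vF,Y)=<v,Y>/F}, for each fixed $\xi\in\mathcal{C}_{\omega_0}$ the map $x\mapsto G(\mathcal{T}^{-1}(\xi))(\mathcal{T}^{-1}(\xi),x)$ is a linear functional on $\mathbb{R}^{n+1}$, and the supremum of a linear functional over a Minkowski combination of compact convex sets is the corresponding combination of the suprema, so
\[
\hat{s}_K(\xi)=\sup_{x\in K}G(\mathcal{T}^{-1}(\xi))(\mathcal{T}^{-1}(\xi),x)=\sum_{i=1}^m\lambda_i\sup_{x\in K_i}G(\mathcal{T}^{-1}(\xi))(\mathcal{T}^{-1}(\xi),x)=\sum_{i=1}^m\lambda_i\hat{s}_{K_i}(\xi).
\]

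To show $K\in\mathcal{K}_{\omega_0}$ I would pass through Proposition \ref{prop:xia-prpo2.6}. Put $\hat{s}:=\sum_{i=1}^m\lambda_i\hat{s}_{K_i}\in C^2(\mathcal{C}_{\omega_0})$; this is an anisotropic capillary convex function in the sense of Definition \ref{defn-convex capillary fun}, and checking it is the routine part: the Robin-type boundary condition \eqref{robin} is linear in the function and hence inherited by $\hat{s}$, while $\tau[\hat{s}]=\sum_i\lambda_i\,\tau[\hat{s}_{K_i}]$ since $\tau[\cdot]$ is a linear operator, and this matrix is positive definite on $\mathcal{C}_{\omega_0}$ because each $\tau[\hat{s}_{K_i}]>0$, every $\lambda_i\ge0$, and $\sum_i\lambda_i>0$. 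By Proposition \ref{prop:xia-prpo2.6}, $\hat{s}$ is then the anisotropic capillary support function of some $L\in\mathcal{K}_{\omega_0}$, and by Lemma \ref{lemma:Xia-Lemma2.4}(1) the hypersurface $\Sigma_L$ is parametrized via \eqref{equ:X(xi)=t-1VF} by
\[
X_L(\xi)=\mathring{\nabla}\hat{s}(\xi)+\hat{s}(\xi)\mathcal{T}^{-1}(\xi)=\sum_i\lambda_i\big(\mathring{\nabla}\hat{s}_{K_i}(\xi)+\hat{s}_{K_i}(\xi)\mathcal{T}^{-1}(\xi)\big)=\sum_i\lambda_i X_{K_i}(\xi),
\]
$X_{K_i}$ being the corresponding parametrization of $\Sigma_{K_i}$. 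In particular $\Sigma_L=\{\sum_i\lambda_iX_{K_i}(\xi):\xi\in\mathcal{C}_{\omega_0}\}\subset K$, hence $L=\mathrm{conv}(\Sigma_L)\subseteq K$.

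For the reverse inclusion $K\subseteq L$ I would compare the two bodies through the classical support function / Gauss map description. The ordinary support function is Minkowski additive, so $u_K=\sum_i\lambda_i u_{K_i}$, and the boundary point of $K$ with outer normal $\nu$ is $Du_K(\nu)=\sum_i\lambda_i Du_{K_i}(\nu)$. By Lemma \ref{Lemma:gaussmap} the spherical image $\mathcal{S}=\{x\in\mathbb{S}^n:\langle\Psi(x),E_{n+1}\rangle\ge-\omega_0\}$ and, for $\nu\in\mathcal{S}$, the point $\xi(\nu)\in\mathcal{C}_{\omega_0}$ with outer normal $\nu$ are common to all bodies in $\mathcal{K}_{\omega_0}$, with $Du_{K_i}(\nu)=X_{K_i}(\xi(\nu))$; thus $Du_K(\nu)=X_L(\xi(\nu))\in\Sigma_L$ for $\nu\in\mathcal{S}$, while for $\nu\notin\mathcal{S}$ each $Du_{K_i}(\nu)\in\partial\Sigma_{K_i}\subset\{x_{n+1}=0\}$, hence $Du_K(\nu)\in\{x_{n+1}=0\}$. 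Combining this with $\nabla^{\mathbb{S}}\nabla^{\mathbb{S}}u_K+u_K\,g^{\mathbb{S}^n}=\sum_i\lambda_i(\nabla^{\mathbb{S}}\nabla^{\mathbb{S}}u_{K_i}+u_{K_i}\,g^{\mathbb{S}^n})>0$ on $\mathrm{int}\,\mathcal{S}$ and with $\langle X_{K_i}(\xi),E_{n+1}\rangle>0$ for $\xi\in\mathrm{int}\,\mathcal{C}_{\omega_0}$ (Proposition \ref{prop:xia-prpo2.6}), one concludes that $\partial K$ is the union of the smooth strictly convex hypersurface $\Sigma_K:=\{Du_K(\nu):\nu\in\mathcal{S}\}=\Sigma_L$, whose interior lies in $\mathbb{R}^{n+1}_+$ and whose boundary lies in $\{x_{n+1}=0\}$, and the flat face $K\cap\{x_{n+1}=0\}$; since the normals of $\partial\Sigma_K$ lie in $\partial\mathcal{S}=\{x\in\mathbb{S}^n:\langle\Psi(x),E_{n+1}\rangle=-\omega_0\}$, the capillary condition \eqref{equ:w0-capillary} holds automatically on $\Sigma_K$, so $K=\mathrm{conv}(\Sigma_K)=\mathrm{conv}(\Sigma_L)=L\in\mathcal{K}_{\omega_0}$.

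I expect the main obstacle to be precisely this last identification: the structural fact that $\partial K=\Sigma_K\cup(K\cap\{x_{n+1}=0\})$ with $\Sigma_K$ smooth, strictly convex and (off its boundary) inside the open half-space, i.e. that the Minkowski sum develops no extra faces or edges with normal outside $\mathcal{S}$ and that its curved part reaches the hyperplane only along the expected edge $\partial\Sigma_K$. This is a property of Minkowski sums of capillary convex bodies in the half-space rather than an anisotropic difficulty; an equivalent way to phrase it is to prove $K\subseteq L$ directly by checking that for each $\nu$ the suprema defining $u_{K_i}(\nu)$ are attained at a common point of $\mathcal{C}_{\omega_0}$, which again reduces to the same statement. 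Everything else is either linearity (Minkowski additivity of support functions, of $\tau[\cdot]$, and of the parametrization) or a direct appeal to Proposition \ref{prop:xia-prpo2.6}, Lemma \ref{lemma:Xia-Lemma2.4}, and Lemma \ref{Lemma:gaussmap}.
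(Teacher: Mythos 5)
Your overall architecture coincides with the paper's: form $\hat s:=\sum_i\lambda_i\hat s_{K_i}$, check by linearity of \eqref{robin} and of $\tau[\cdot]$ that it is an anisotropic capillary convex function, invoke Proposition \ref{prop:xia-prpo2.6} to produce $L\in\mathcal{K}_{\omega_0}$ with capillary support function $\hat s$, and then identify $L$ with the Minkowski sum $K$; your derivation of \eqref{equ:sK=sumSi} from the sup-formula \eqref{support} is, via \eqref{equ:G(vF,Y)=<v,Y>/F}, the same Minkowski additivity $s_K=\sum_i\lambda_i s_{K_i}$ of the classical support function that the paper uses. Where you genuinely diverge is the identification $K=L$: the paper observes that $\hat s_{K_i}$ is the restriction to $\mathcal{C}_{\omega_0}$ of $\tfrac{s_{K_i}}{F}\circ\Psi^{-1}\circ\mathcal{T}^{-1}$, hence $\hat s$ is the restriction of the same expression built from $s_K$, and then cites the position-vector formula \cite{Xia13}*{Proposition 3.2} on the Wulff shape to recognize $\mathring\nabla\hat s+\hat s\,\mathcal{T}^{-1}(\xi)$ as the boundary point of $K$ with the corresponding normal; you instead work on $\mathbb{S}^n$ with the Gauss map and the face structure of $\partial K$.

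The one real gap is the step you flag yourself: that $\partial K$ has no points of positive height whose outer normal lies outside $\mathcal{S}$ (equivalently, that for each direction the suprema over the $K_i$ are attained at corresponding points). Without it, $K\subseteq L$ is unproved and the proposition is not established, so this cannot be left as an "expected obstacle". It is, however, closed in one line by the standard decomposition of faces of a Minkowski sum, $F(K,\nu)=\sum_i\lambda_i F(K_i,\nu)$ (\cite{book-convex-body}*{Theorem 1.7.5}): for $\nu\in\mathcal{S}$ each $F(K_i,\nu)$ is the single point $X_{K_i}(\xi(\nu))$ with a common parameter $\xi(\nu)\in\mathcal{C}_{\omega_0}$ (Lemma \ref{Lemma:gaussmap}, together with the fact that corresponding points share the same unit normal), so $F(K,\nu)=\{X_L(\xi(\nu))\}\subset\Sigma_L$; for $\nu\notin\mathcal{S}$ each $F(K_i,\nu)$ lies in the closed flat face and hence in $\{x_{n+1}=0\}$. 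Thus $\partial K\cap\mathbb{R}^{n+1}_+\subset\Sigma_L$, and combined with your inclusion $\Sigma_L\subset\partial K$ this forces $K=L$. With that addition your argument is complete and is, in substance, an equally valid alternative to the paper's appeal to Xia's formula.
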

\begin{proof}
	For convex bodies $K_i\in\mathcal{K}_{\omega_0}$, we denote $s_{K_i}(x)$ with $x\in\mathbb{S}^n$ as the classical support function of $K_{i}$. Then the classical support  function of convex bodies $K\in\overline{\mathbb{R}^{n+1}_+}$ is $s_K=\sum_{i=1}^{m}\lambda_is_{K_i}$. Since $\nu_{{F}}=\Psi(\nu)$,  where $\Psi$ is defiend by \eqref{equ:Psi}, the anisotropic support function of $K$ is  $\frac{{s_K}}{F}\circ\Psi^{-1}$ which is defined on $\W$.
	
	Let $\widetilde{s}_{K_i}(\xi)=\frac{s_{K_i}}{F}\circ\Psi^{-1}\circ\mathcal{T}^{-1}(\xi)$, with $\xi\in\(\W+\omega_0E_{n+1}^F\)$. By $\nu_{{F}}=\Psi(\nu)$, $\widetilde{\nu_{{F}}}=\mathcal{T}(\nu_{{F}})$, and the definition of anisotropic capillary support function, we have $\hat{s}_{K_i}=\left.\widetilde{s}_{K_i}\right|_{\mathcal{C}_{\omega_0}}$. We define
	\begin{align*}
		\hat{s}:=\sum_{i=1}^m \lambda_i \hat{s}_{K_i},
		\quad
		\widetilde{s}_K:=\sum_{i=1}^m \lambda_i \widetilde{s}_{K_i}=\frac{s_K}{F}\circ\Psi^{-1}\circ\mathcal{T}^{-1},
	\end{align*}	
	then
$$
\hat{s}:=\left.\widetilde{s}_{K}\right|_{\mathcal{C}_{\omega_0}}.
$$

From Lemma \ref{lemma:Xia-Lemma2.4} (2) and (3), it is easy to see $\hat{s} \in C^2\left(\mathcal{C}_{\omega_0}\right)$ is a capillary convex function, together with Proposition \ref{prop:xia-prpo2.6}, this creates a capillary convex body $L \in \mathcal{K}_{\omega_0}$ with anisotropic capillary support function $\hat{s}$.
From Lemma \ref{lemma:Xia-Lemma2.4} (1), the position vector of $\partial L \cap \overline{\mathbb{R}_{+}^{n+1}}$ is given by
\begin{align}\label{equ:X-xi}
	X(\xi)=\mathring{\nabla} \hat{s}(\xi)+\hat{s}(\xi) \mathcal{T}^{-1}(\xi)=\mathring{\nabla} \widetilde{s}_K(\xi)+\widetilde{s}_K(\xi) \mathcal{T}^{-1}(\xi), \quad \xi \in \mathcal{C}_{\omega_0}.
\end{align}
Let  $\bar{g}(z)=G(z)|_{T_z\W}$  (for  $z\in\W$) be the metric of $(\W,\bar{g},\bar{\nabla})$ (defined in \cite{Xia13}*{Example 2.7}). Since $\mathring{g}:=G(\mathcal{T}^{-1}{(\xi)})|_{T_{{\xi}}\mathcal{C}_{\omega_0}}$ (for ${\xi}\in\mathcal{C}_{\omega_0}\subset\mathcal{T}\W$), we have $\mathring{g}=(\mathcal{T}^{-1}|_{\mathcal{C}_{\omega_0}})^{*}\bar{g}$. Then for any $f\in C^1(\mathcal{C}_{\omega_0})$ we have $\mathring{\nabla}(f\circ\mathcal{T}^{-1})=\bar{\nabla}f\circ\mathcal{T}^{-1}$, so \eqref{equ:X-xi} (the position vector of $L$) can be written as
\begin{align*}
X=\left.\left(\bar{\nabla} (\frac{s_K}{F}\circ\Psi^{-1})(z)+\frac{s_K}{F}\circ\Psi^{-1}(z)\cdot z\right) \right|_{z=\mathcal{T}^{-1}(\xi)}	, \quad \xi \in \mathcal{C}_{\omega_0}.
\end{align*}
By \cite{Xia13}*{Proposition 3.2}, the right hand side is exactly the position vector of $K$ restricting on $\mathcal{C}_{\omega_0}$, hence we conclude that $K=L \in \mathcal{K}_{\omega_0}$. The last assertion follows from $\hat{s}_K=\left.\widetilde{s}_K\right|_{\mathcal{C}_{\omega_0}}$.
\end{proof}

\begin{definition}[\cite{book-convex-body,Xia-arxiv}]
	The mixed discriminant $\mathcal{Q}:\left(\mathbb{R}^{n \times n}\right)^n \rightarrow \mathbb{R}$ is defined by	
	\begin{align}
		\operatorname{det}\left(\lambda_1 A_1+\cdots+\lambda_m A_m\right)=\sum_{i_1, \cdots, i_n=1}^m \lambda_{i_1} \cdots \lambda_{i_n} \mathcal{Q}\left(A_{i_1}, \cdots, A_{i_n}\right),\label{equ:QA}
	\end{align}
	for $m \in \mathbb{N}, \lambda_1, \cdots, \lambda_m \geq 0$ and the real symmetric matrices $A_1, \cdots, A_m \in \mathbb{R}^{n \times n}$. If $\left(A_k\right)_{i j}$ denotes the $(i, j)$-element of the matrix $A_k$, then	
\begin{align}\label{equ:Q1...n}
		\mathcal{Q}\left(A_1, \cdots, A_n\right)=\frac{1}{n!} \sum_{i_1, \cdots, i_n; j_1, \cdots, j_n} \delta_{j_1 \cdots j_n}^{i_1 \cdots i_n}\left(A_1\right)_{i_1 j_1} \cdots\left(A_n\right)_{i_n j_n}.
\end{align}
	
\end{definition}
From \eqref{equ:QA}, we check that for any invertible matrix $B\in \mathbb{R}^{n \times n}$, there holds
\begin{align}\label{equ:Q(AB)=Q(A)det(B)}
	\mathcal{Q}\left(A_1B, \cdots, A_nB\right)=\mathcal{Q}\left(A_1, \cdots, A_n\right)\det(B).
\end{align}
By \eqref{equ:Q1...n}, we have
\begin{align}\label{equ:pQ1...n}
	\frac{\partial}{\partial (A_1)_{ij}}\mathcal{Q}\left(A_1, \cdots, A_n\right)=\frac{1}{n!} \sum_{i, i_2, \cdots, i_n; j, j_2, \cdots, j_n} \delta_{jj_2 \cdots j_n}^{ii_2 \cdots i_n}\left(A_2\right)_{i_2 j_2} \cdots\left(A_n\right)_{i_n j_n}.
\end{align}
Then, we obtain
$$\mathcal{Q}(A_1,\cdots,A_n)=\sum_{i,j=1}^{n}(A_1)_{ij}\cdot \frac{\partial}{\partial (A_1)_{ij}} \mathcal{Q}\left(A_1, \cdots, A_n\right).$$
 Similarly, for fixed $i$, we have
 $$\frac{\partial}{\partial (A_1)_{ii}} \mathcal{Q}\left(A_1, \cdots, A_n\right)=\sum_{k,l=1}^{n}(A_2)_{kl}\cdot \frac{\partial^2}{\partial (A_1)_{ii}\partial (A_2)_{lk}} \mathcal{Q}\left(A_1, \cdots, A_n\right).$$
  If $A_1,\cdots,A_n$ are positive definite, by  an inductive proof (together with a unimodular transformation bringing $A_1$ and $A_2$ to diagonal forms) we know that $\mathcal{Q}(A_1,\cdots,A_n)>0$ and $\left(\frac{\partial}{\partial (A_1)_{ij}} \mathcal{Q}\left(A_1, \cdots, A_n\right)\right)_{n\times n}>0$ (or see \cite{book-convex-body}*{Page 328}).

\begin{definition}\label{def:V(f1,.,fn)}
	For any $f \in C^2\left(\mathcal{C}_{\omega_0}\right)$, set the $n\times n$ matrix $A[f]:=B\cdot \tau[f]\cdot B^T$, where $B$ satisfies $BB^T=\mathring{g}^{-1}$. A multi-variable function $V:\left(C^2\left(\mathcal{C}_{\omega_0}\right)\right)^{n+1} \rightarrow \mathbb{R}$ is defined by	
	\begin{align*}
		V\left(f_0, \cdots, f_{n}\right)=\frac{1}{n+1} \int_{\mathcal{C}_{\omega_0}} f_0 \mathcal{Q}\left(A \left[f_1\right], \cdots, A \left[f_{n}\right]\right) {
		F(\nu)	\mathrm{d} {\mu_{g}}},
	\end{align*}
	for $f_i \in C^2\left(\mathcal{C}_{\omega_0}\right), 0 \leq i \leq n$, where $d\mu_g$ is the induced volume form of the metric $g$ on $\mathcal{C}_{\omega_0}$, which is induced from Euclidean space $(\mathbb{R}^{n+1},\<\cdot,\cdot\>)$. 
\end{definition}
We remark that,  $g$ is denoted as the   metric  of  $\Sigma$ or $\mathcal{C}_{\omega_0}$ induced  from Euclidean space depending on the context.

The importance of the multi-linear function $V$ lies in its symmetry.
The following lemma plays a crucial role in establishing the symmetry of $V(f_1,\cdots,f_{n+1})$ (Lemma \ref{lemma:V12=V21}). This constitutes a fundamental distinction of anisotropic geometry in our work (while in the isotropic case  $\mathcal{Q}^{ij}$ satisfies the divergence-free condition). Specially,  if $f_2=\cdots=f_{n+1}$, \eqref{equ:0=Qjijfi-Q+Q} matches  \cite{Xia13}*{eq. (5.10)}.
\begin{lemma}
	\label{lemma:0=Qjijfi-Q+Q}
	For $f_i\in\C^2(\mathcal{C}_{\omega_0})$, $1\leq i\leq n$, denote by matrices $A_k=\tau[f_k]$,  
	 $\mathcal{Q}:=\mathcal{Q}\left(A_1, \cdots, A_n\right)$, and $\mathcal{Q}^{ij}:=\frac{\partial}{\partial (A_1)_{ij}}\mathcal{Q}\left(A_1, \cdots, A_n\right)$, then
	\begin{align}
		\mathring{\nabla}_j\mathcal{Q}^{ij}\mathring{\nabla}_if_1-\frac{1}{2}\mathcal{Q}^{ij}\mathring{\nabla}_if_1Q_{jkl}\mathring{g}^{kl}+\frac{1}{2}\mathcal{Q}^{ij}\mathring{\nabla}_lf_1Q_{ijk}\mathring{g}^{kl}=0.\label{equ:0=Qjijfi-Q+Q}
	\end{align}
\end{lemma}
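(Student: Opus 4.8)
The plan is to differentiate the mixed discriminant identity and use the properties of the third-order tensor $Q$ from Lemma \ref{lemma:Gauss-Weingarten}, in particular the symmetry $\mathring{\nabla}_i Q_{jkl}=\mathring{\nabla}_j Q_{ikl}$ and the total symmetry of $Q$ in its three indices. First I would recall that $\mathcal{Q}^{ij}$, as the first derivative of a mixed discriminant in the symmetric matrices $A_1,\dots,A_n$, is itself (up to a combinatorial factor) a mixed discriminant of $A_2,\dots,A_n$ and hence satisfies the classical Jacobi-type divergence identity: if the matrices $A_k=\tau[f_k]$ were built from a \emph{fixed} background connection in the standard way (second covariant Hessian plus a zeroth-order term), then $\mathring\nabla_j\mathcal{Q}^{ij}$ would vanish by the commutation of covariant derivatives modulo curvature terms. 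The point of \eqref{equ:0=Qjijfi-Q+Q} is that our $\tau_{ij}[f]=\mathring\nabla_i\mathring\nabla_j f+\mathring g_{ij}f-\tfrac12 Q_{ijk}\mathring\nabla_k f$ has the extra $Q$-term, and one must track exactly how it enters.

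The key steps, in order, would be: (i) Express $\mathring\nabla_j\mathcal{Q}^{ij}$ by differentiating the definition \eqref{equ:pQ1...n} through $A_2,\dots,A_n$; each factor $(A_m)_{i_m j_m}=\tau_{i_m j_m}[f_m]$ contributes a term involving $\mathring\nabla_j(\mathring\nabla_{i_m}\mathring\nabla_{j_m}f_m)$, a term $\mathring\nabla_j(\mathring g_{i_mj_m}f_m)=\mathring g_{i_mj_m}\mathring\nabla_j f_m$, and a term $-\tfrac12\mathring\nabla_j(Q_{i_mj_mk}\mathring\nabla_k f_m)$. (ii) Commute the third covariant derivative $\mathring\nabla_j\mathring\nabla_{i_m}\mathring\nabla_{j_m}f_m$ to $\mathring\nabla_{i_m}\mathring\nabla_j\mathring\nabla_{j_m}f_m$ plus a Riemann-curvature correction, and substitute the Gauss equation from Lemma \ref{lemma:Gauss-Weingarten} for $\mathring R_{ijkl}$; the pure $\mathring g_{ik}\mathring g_{jl}-\mathring g_{il}\mathring g_{jk}$ part of the curvature cancels against the $\mathring g_{i_mj_m}f_m$ contribution after contracting with the skew-symmetric $\delta$-symbol, exactly as in the isotropic derivation of \cite{Xia13}*{eq. (5.10)}, leaving only the genuinely anisotropic $\tfrac14 Q\,Q$ part of the curvature and the $Q$-terms from step (i). (iii) Contract everything against $\mathring\nabla_i f_1$ and the skew $\delta$-symbol implicit in $\mathcal{Q}^{ij}$, use $\mathring\nabla_i Q_{jkl}=\mathring\nabla_j Q_{ikl}$ to convert derivatives of $Q$ into a form that can be absorbed into $\mathcal{Q}^{ij}\mathring\nabla_l f_1 Q_{ijk}\mathring g^{kl}$, and use the total symmetry of $Q$ together with the symmetry of $\mathcal{Q}^{ij}$ in $i,j$ to organize the remaining quadratic-in-$Q$ terms. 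Finally (iv) collect: the claim is precisely that the sum of the "divergence of $\mathcal{Q}^{ij}$" term, the $-\tfrac12\mathcal{Q}^{ij}\mathring\nabla_if_1Q_{jkl}\mathring g^{kl}$ term, and the $+\tfrac12\mathcal{Q}^{ij}\mathring\nabla_lf_1Q_{ijk}\mathring g^{kl}$ term is zero, so after the cancellations in (ii)-(iii) one checks the surviving $Q$-and-$QQ$ terms cancel identically.

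The main obstacle I expect is bookkeeping the $Q$-terms in step (iii): unlike the isotropic case, $\mathcal{Q}^{ij}$ is not divergence-free, and the quadratic expressions $\mathcal{Q}^{ij}Q_{jkl}\mathring\nabla_m f_1\,(\text{stuff})$ coming from $\mathring\nabla_j\mathcal{Q}^{ij}$, from the curvature term $\tfrac14 Q_{jkp}Q_{mli}$, and from the explicit last term in the definition of $\tau$ must be shown to combine to zero using only the symmetry relations available (total symmetry of $Q$, symmetry of $\mathcal{Q}^{ij}$ and of $\mathcal{Q}^{ij,kl}$ under index swaps, and $\mathring\nabla_{[i}Q_{j]kl}=0$). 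A clean way to manage this is to choose a $\mathring g$-normal frame at the point in question so that the first derivatives of $\mathring g$ vanish and all Christoffel symbols drop, reducing the identity to a finite algebraic identity among the components of $Q$, $\mathring\nabla Q$, and the derivatives of the mixed discriminant, which can then be verified by the same unimodular diagonalization argument already used in the excerpt to prove positivity of $\mathcal{Q}$ and of $(\mathcal{Q}^{ij})$. I would also double-check the special case $f_2=\cdots=f_n$ against \cite{Xia13}*{eq. (5.10)} as a consistency test before finalizing the general computation.
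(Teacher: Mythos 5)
Your plan is correct and follows essentially the same route as the paper: the paper likewise works in a $\mathring{g}$-orthonormal frame, expands $\mathring{\nabla}_j\mathcal{Q}^{ij}$ through the factors $\tau[f_k]$ via \eqref{equ:pQ1...n}, and cancels the resulting terms against the two explicit $Q$-contractions using the antisymmetry of the generalized Kronecker symbol and the total symmetry of $Q$ (the pairwise cancellation $B_{l,k}=-B_{k,l}$). The only difference is that where you propose to re-derive the needed commutation of third derivatives from the Gauss equation and $\mathring{\nabla}_i Q_{jkl}=\mathring{\nabla}_j Q_{ikl}$, the paper imports this ready-made as the Codazzi-type deviation identity $\mathring{\nabla}_k\tau_{ij}-\mathring{\nabla}_j\tau_{ik}=\tfrac{1}{2}\left(Q_{ikm}\tau_{mj}-Q_{ijm}\tau_{mk}\right)$ from \cite{Xia13}, which packages your steps (i)--(ii) into a single cited formula.
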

\begin{proof}
	Without loss of generality, we compute in an orthogonal frame, i.e., $\mathring{g}_{ij}=\delta_{ij}$. From \cite{Xia13}*{eq.(5.2)}, we see
	\begin{align}\label{equ:pf-lemma:0=Qjijfi-Q+Q-0}
		\mathring{\nabla}_k\tau_{ij}-\mathring{\nabla}_j\tau_{ik}=\sum_{m=1}^{n}\frac{1}{2}Q_{ikm}\tau_{mj}-\frac{1}{2}Q_{ijm}\tau_{mk},\quad \forall k,i,j=1,\cdots, n.
	\end{align}
For any $k,l=2,\cdots,n$, for convenience, we use the symbols $$\mathbb{A}=(A_2)_{i_2j_2}\cdots(A_n)_{i_nj_n},$$ $$\hat{\mathbb{A}}_{k}=(A_2)_{i_2j_2}\cdots(A_{k-1})_{i_{k-1}j_{k-1}} (A_{k+1})_{i_{k+1}j_{k+1}}\cdots (A_n)_{i_nj_n},$$
and for { $k\neq l$}: $$\hat{\mathbb{A}}_{lk}=(A_2)_{i_2j_2}\cdots(A_{k-1})_{i_{k-1}j_{k-1}} (A_{k+1})_{i_{k+1}j_{k+1}}\cdots (A_{l-1})_{i_{l-1}j_{l-1}} (A_{l+1})_{i_{l+1}j_{l+1}}\cdots  (A_n)_{i_nj_n}.$$   	From  \eqref{equ:pQ1...n},  \eqref{equ:pf-lemma:0=Qjijfi-Q+Q-0}, and the antisymmetry of $ \delta_{jj_2\cdots j_n}^{ii_2\cdots i_n} $ with respect to $j$ and $j_k$, we have
	\begin{align}
		&\sum_{i,j=1}^{n}\mathring{\nabla}_j\mathcal{Q}^{ij}\mathring{\nabla}_if_1\nonumber
		\\
		=&
		\sum_{k=2}^{n}\sum_{\substack{i,i_2, \cdots, i_n;\\j, j_2, \cdots, j_n;\\m}}\frac{1}{2n!}\delta_{jj_2\cdots j_n}^{ii_2\cdots i_n}  \mathring{\nabla}_if_1\cdot \left(\mathring{\nabla}_j (A_k)_{i_kj_k}-\mathring{\nabla}_{j_k} (A_k)_{i_kj}\right)\cdot\hat{\mathbb{A}}_k\nonumber
		\\
		\overset{\eqref{equ:pf-lemma:0=Qjijfi-Q+Q-0}}{=}&\sum_{k=2}^{n}\sum_{\substack{i, i_2, \cdots, i_n;\\j, j_2, \cdots, j_n;\\m}}\frac{1}{2n!}\delta_{jj_2\cdots j_n}^{ii_2\cdots i_n}  \mathring{\nabla}_if_1\left(
		\frac{1}{2}Q_{i_kjm}(A_k)_{mj_k}-\frac{1}{2}Q_{i_kj_km}(A_2)_{mj}
		\right)\hat{\mathbb{A}}_k
		\nonumber
		\\
		=&\sum_{k=2}^{n}\sum_{\substack{i, i_2, \cdots, i_n;\\j, j_2, \cdots, j_n;\\m}}\frac{1}{2n!}\delta_{jj_2\cdots j_n}^{ii_2\cdots i_n}  \mathring{\nabla}_if_1\cdot
		Q_{i_kjm}\cdot(A_k)_{mj_k}
		\cdot \hat{\mathbb{A}}_k.\label{equ:pflemma3.17-0}
	\end{align}
 Fixed $k\in\{2,\cdots,n\}$, since the generalized Kronecker symbol $\delta_{jj_2\cdots j_n}^{ii_2\cdots i_n} $ is nonvanishing only when $(i,i_2,\cdots ,i_n)$ is a permutation of $(1,\cdots,n)$,
 it follows that
	
		\begin{align}
		&\sum_{\substack{i, i_2, \cdots, i_n;\\j,j_2, \cdots, j_n;\\m}}
		 \delta_{jj_2\cdots j_n}^{ii_2\cdots i_n}  \mathring{\nabla}_if_1\cdot
		Q_{i_kjm}\cdot(A_k)_{mj_k}
		\cdot \hat{\mathbb{A}}_k\nonumber
		\\
		=&\sum_{\substack{i, i_2, \cdots, i_n;\\j, j_2, \cdots, j_n}}
		\delta_{jj_2\cdots j_n}^{ii_2\cdots i_n}  \mathring{\nabla}_if_1\cdot
		Q_{i_kji}\cdot(A_k)_{ij_k}
		\cdot \hat{\mathbb{A}}_k\nonumber
		\\
		&+\sum_{l=2}^{n}\sum_{\substack{i, i_2, \cdots, i_n;\\j, j_2, \cdots, j_n}}
		\delta_{jj_2\cdots j_n}^{ii_2\cdots i_n}  \mathring{\nabla}_if_1\cdot
		Q_{i_kji_l}\cdot(A_k)_{i_lj_k}
		\cdot \hat{\mathbb{A}}_k\nonumber
		\\
		=:&\sum_{\substack{i, i_2, \cdots, i_n;\\j, j_2, \cdots, j_n}}
		\delta_{jj_2\cdots j_n}^{ii_2\cdots i_n}  \mathring{\nabla}_if_1\cdot
		Q_{i_kji}\cdot(A_k)_{ij_k}
		\cdot \hat{\mathbb{A}}_k
		+\sum_{l=2}^{n}B_{l,k},\label{equ:pflemma3.17-1}
	\end{align}
	where
$$B_{l,k}=:\sum_{\substack{i, i_2, \cdots, i_n;\\j, j_2, \cdots, j_n}}
	\delta_{jj_2\cdots j_n}^{ii_2\cdots i_n}  \mathring{\nabla}_if_1\cdot
	Q_{i_kji_l}\cdot(A_k)_{i_lj_k}
	\cdot \hat{\mathbb{A}}_k,$$
 and obviously
 $$B_{k,k}=\sum_{\substack{i, i_2, \cdots, i_n;\\ j, j_2, \cdots, j_n}}
	\delta_{jj_2\cdots j_n}^{ii_2\cdots i_n}\mathring{\nabla}_if_1\cdot
	Q_{i_kji_k}\cdot \mathbb{A}.$$
 When $k\neq l$,  
  one can check that
\begin{align*}
	B_{l,k}=&\sum_{\substack{i, i_2, \cdots, i_n;\\j, j_2, \cdots, j_n}}
	\delta_{jj_2\cdots j_n}^{ii_2\cdots i_n}  \mathring{\nabla}_if_1\cdot
	Q_{i_kji_l}\cdot(A_k)_{i_lj_k}(A_l)_{i_lj_l}
	\cdot \hat{\mathbb{A}}_{kl}
	\\
	=&\sum_{\substack{i, i_2,\cdots , i_n;\\j, j_2, \cdots, j_n}}
	\delta_{jj_2\cdots j_k\cdots j_l\cdots  j_n}^{ii_2\cdots i_l\cdots i_k\cdots i_n}  \mathring{\nabla}_if_1\cdot
	Q_{i_lji_l}\cdot(A_k)_{i_kj_k}(A_l)_{i_kj_l}
	\cdot \hat{\mathbb{A}}_{kl}
	\\
	=&-B_{k,l},
\end{align*}
where we use the antisymmetry of $ \delta_{jj_2\cdots j_n}^{ii_2\cdots i_n} $ with respect to $i_k$ and $i_{l}$.	Summing both sides of equation \eqref{equ:pflemma3.17-1} over $k\in\{2\cdots,n\}$ yields
	\begin{align}
		&\sum_{k=2}^{n}\sum_{\substack{i, i_2, \cdots, i_n;\\j,j_2, \cdots, j_n;\\m}}
		\delta_{jj_2\cdots j_n}^{ii_2\cdots i_n}  \mathring{\nabla}_if_1\cdot
		Q_{i_kjm}\cdot(A_k)_{mj_k}
		\cdot \hat{\mathbb{A}}_k\nonumber
		\\
		=&\sum_{k=2}^{n}\sum_{\substack{i, i_2, \cdots, i_n;\\j, j_2, \cdots, j_n}}
		\delta_{jj_2\cdots j_n}^{ii_2\cdots i_n}  \mathring{\nabla}_if_1\cdot
		Q_{i_kji}\cdot(A_k)_{ij_k}
		\cdot \hat{\mathbb{A}}_k
		+\sum_{k,l=2}^{n}B_{l,k}\nonumber
		\\
		=&-\sum_{k=2}^{n}\sum_{\substack{i, i_2, \cdots, i_n;\\j, j_2, \cdots, j_n}}\mathring{\nabla}_{i_k}f_1\cdot
		Q_{i_kji}\cdot
		\delta_{jj_2\cdots j_n}^{ii_2\cdots i_n} \cdot\mathbb{A}\nonumber
		\\
		&+\sum_{k=2}^{n}\sum_{\substack{i, i_2, \cdots, i_n;\\ j, j_2, \cdots, j_n}}\mathring{\nabla}_if_1\cdot Q_{i_kji_{k}}\cdot \delta_{jj_2\cdots j_n}^{ii_2\cdots i_n}
		\cdot\mathbb{A}\nonumber
	\\
	=&-\sum_{\substack{i, i_2, \cdots, i_n;\\ j, j_2, \cdots, j_n; \\m}}\mathring{\nabla}_{m}f_1\cdot Q_{mji}\cdot
	\delta_{jj_2\cdots j_n}^{ii_2\cdots i_n}
	\cdot\mathbb{A}
	+\sum_{\substack{i, i_2, \cdots, i_n;\\ j, j_2, \cdots, j_n}}\mathring{\nabla}_{i}f_1\cdot Q_{iji}\cdot
	\delta_{jj_2\cdots j_n}^{ii_2\cdots i_n}
	\cdot\mathbb{A}\nonumber
	\\
		&+\sum_{\substack{i, i_2, \cdots, i_n;\\j, j_2, \cdots, j_n;\\m}}\mathring{\nabla}_if_1\cdot  Q_{mjm}\cdot	\delta_{jj_2\cdots j_n}^{ii_2\cdots i_n}
		\cdot\mathbb{A}-\sum_{\substack{i,i_2, \cdots, i_n;\\j, j_2, \cdots, j_n}}\mathring{\nabla}_if_1\cdot Q_{iji}\cdot	\delta_{jj_2\cdots j_n}^{ii_2\cdots i_n}
		\cdot\mathbb{A}\nonumber
		\\
		=&-\sum_{\substack{i, i_2, \cdots, i_n;\\j, j_2, \cdots, j_n;\\m}}\mathring{\nabla}_{m}f_1\cdot Q_{mji}\cdot
		\delta_{jj_2\cdots j_n}^{ii_2\cdots i_n}
		\cdot\mathbb{A}+\sum_{\substack{i, i_2, \cdots, i_n;\\j, j_2, \cdots, j_n;\\m}}\mathring{\nabla}_if_1\cdot  Q_{mjm}\cdot	\delta_{jj_2\cdots j_n}^{ii_2\cdots i_n}
		\cdot{\mathbb{A}}
		.\label{equ:pf-lem3.17-1}
	\end{align}
  Putting \eqref{equ:pf-lem3.17-1} and \eqref{equ:pQ1...n} in to \eqref{equ:pflemma3.17-0}, we obtain \eqref{equ:0=Qjijfi-Q+Q}.
\end{proof}
Then we can prove following lemma.
\begin{lemma}\label{lemma:V12=V21}
	Let $f_0,\cdots,f_n$ be anisotropic capillary convex functions, then $V(f_0,\cdots,f_n)$ is a symmetric and multi-linear function.
\end{lemma}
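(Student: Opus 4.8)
The plan is to prove multi-linearity (which is formal) and symmetry (which carries the content) separately. Multi-linearity: the map $f\mapsto\tau[f]$, $\tau_{ij}[f]=\mathring\nabla_i\mathring\nabla_j f+\mathring g_{ij}f-\tfrac12 Q_{ijk}\mathring\nabla^{k}f$, is linear, hence so is $f\mapsto A[f]=B\tau[f]B^{T}$; since $\mathcal Q$ is multi-linear in its $n$ matrix arguments by \eqref{equ:QA} and $f_0$ enters $V$ only through the explicit factor $f_0$ under the integral, $V(f_0,\dots,f_n)$ is linear in each slot (positive linear combinations of anisotropic capillary convex functions again satisfy \eqref{robin} and have $\tau[\,\cdot\,]>0$, so one stays inside the relevant cone). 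Because $\mathcal Q$ is symmetric in its $n$ matrix arguments, $V$ is automatically symmetric in $f_1,\dots,f_n$; it therefore suffices to prove the single transposition $V(f_0,f_1,f_2,\dots,f_n)=V(f_1,f_0,f_2,\dots,f_n)$, i.e.
\begin{align*}
\int_{\mathcal C_{\omega_0}} f_0\,\mathcal Q\big(A[f_1],A[f_2],\dots,A[f_n]\big)F(\nu)\,\mathrm{d}\mu_g=\int_{\mathcal C_{\omega_0}} f_1\,\mathcal Q\big(A[f_0],A[f_2],\dots,A[f_n]\big)F(\nu)\,\mathrm{d}\mu_g .
\end{align*}

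To set this up I would rewrite the integrand over the metric $\mathring g$. From $BB^{T}=\mathring g^{-1}$, transpose-invariance of $\mathcal Q$, and \eqref{equ:Q(AB)=Q(A)det(B)} one obtains $\mathcal Q(A[f_1],\dots,A[f_n])=(\det\mathring g)^{-1}\mathcal Q(\tau[f_1],\dots,\tau[f_n])$, which in a local $\mathring g$-orthonormal frame is $\mathcal Q^{ij}\tau_{ij}[f_1]$, with $\mathcal Q^{ij}$ as in Lemma \ref{lemma:0=Qjijfi-Q+Q} and depending only on $\tau[f_2],\dots,\tau[f_n]$; and on $\mathcal C_{\omega_0}$ one has $F(\nu)\,\mathrm{d}\mu_g=w\,\mathrm{d}\mu_{\mathring g}$ for a positive density $w$ satisfying $\mathring\nabla_j\log w=-\tfrac12\mathring g^{kl}Q_{jkl}$ — this follows by comparing the Levi-Civita connections of the Euclidean induced metric and of $\mathring g$ through the anisotropic Gauss formula of Lemma \ref{lemma:Gauss-Weingarten} and $\nu_F=F(\nu)\nu+\nabla^{\mathbb{S}}F(\nu)$, the correction to the $Q$-term being exactly $\mathring\nabla\log F(\nu)$. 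Since the $\mathring g_{ij}f_1$ part of $\tau_{ij}[f_1]$ contributes $\int f_0 f_1\,\mathcal Q^{ij}\mathring g_{ij}\,w\,\mathrm{d}\mu_{\mathring g}$, which is already symmetric in $f_0\leftrightarrow f_1$, only the Hessian part and the $Q$-part require work.

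Writing $I(f_0,f_1):=\int f_0\,\mathcal Q^{ij}\big(\mathring\nabla_i\mathring\nabla_j f_1-\tfrac12 Q_{ijk}\mathring\nabla^{k}f_1\big)w\,\mathrm{d}\mu_{\mathring g}$, I would integrate by parts once in the Hessian sub-part of $I(f_0,f_1)$ and of $I(f_1,f_0)$. The pieces in which both $\mathring\nabla f_0$ and $\mathring\nabla f_1$ survive are symmetric and cancel in $I(f_0,f_1)-I(f_1,f_0)$; what remains is a bulk term $-\int\big(f_0\,R^{j}\mathring\nabla_j f_1-f_1\,R^{j}\mathring\nabla_j f_0\big)\mathrm{d}\mu_{\mathring g}$ with $R^{j}=w\mathring\nabla_i\mathcal Q^{ij}+\mathcal Q^{ij}\mathring\nabla_i w+\tfrac12 w\,\mathcal Q^{ik}Q_{ikl}\mathring g^{lj}$, plus a boundary term $\int_{\partial\mathcal C_{\omega_0}}w\,\mathcal Q^{ij}\hat\mu_i\big(f_0\mathring\nabla_j f_1-f_1\mathring\nabla_j f_0\big)\mathrm{d}\sigma$ ($\hat\mu$ the $\mathring g$-unit conormal). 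This is where Lemma \ref{lemma:0=Qjijfi-Q+Q} is decisive: extracting the coefficient of $\mathring\nabla_i f_1$ in \eqref{equ:0=Qjijfi-Q+Q} (legitimate since $\mathcal Q^{ij}$ is $f_1$-free and $\mathring\nabla f_1$ is arbitrary) yields $\mathring\nabla_i\mathcal Q^{ij}=\tfrac12\mathcal Q^{ij}Q_{ikl}\mathring g^{kl}-\tfrac12\mathcal Q^{mn}Q_{mnk}\mathring g^{kj}$; substituting this together with $\mathring\nabla_i\log w=-\tfrac12\mathring g^{kl}Q_{ikl}$ into $R^{j}$ makes all $Q$-terms cancel (using the total symmetry of $Q$), giving $R^{j}=w\,\mathcal Q^{ij}\big(\tfrac12\mathring g^{kl}Q_{ikl}+\mathring\nabla_i\log w\big)=0$. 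Hence the entire bulk contribution to $I(f_0,f_1)-I(f_1,f_0)$ vanishes.

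Finally I would dispose of the boundary term $\int_{\partial\mathcal C_{\omega_0}}w\,\mathcal Q^{ij}\hat\mu_i\big(f_0\mathring\nabla_j f_1-f_1\mathring\nabla_j f_0\big)\mathrm{d}\sigma$. Decomposing $\mathring\nabla f_k$ along $\partial\mathcal C_{\omega_0}$ into its $\hat\mu$-component and its component tangent to $\partial\mathcal C_{\omega_0}$: in the $\mathring g$-metric the condition \eqref{robin} reads $\mathring\nabla_{\hat\mu}f_k=c\,f_k$ with a geometric coefficient $c$ independent of $k$ (here one uses $\mu_F=A_F(\nu)\mu$ and the relation between $G(\nu_F)$, $\mu$ and $\mu_F$), so the $\hat\mu$-component contributes $c(f_0 f_1-f_1 f_0)\mathcal Q^{ij}\hat\mu_i\hat\mu_j=0$; for the tangential component one uses that along $\partial\mathcal C_{\omega_0}$ the conormal direction is distinguished for the anisotropic Weingarten operator of any anisotropic capillary hypersurface (equivalently $\mathcal Q^{ij}\hat\mu_i$ is proportional to $\hat\mu^{j}$ there), so that contribution vanishes as well. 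Collecting the three pieces gives $V(f_0,f_1,f_2,\dots,f_n)=V(f_1,f_0,f_2,\dots,f_n)$, and with the automatic symmetry in the last $n$ slots, $V$ is symmetric. I expect the two genuinely anisotropic steps — the $Q$-cancellation via Lemma \ref{lemma:0=Qjijfi-Q+Q} combined with the weight identity, and the boundary analysis built on \eqref{robin} and the structure of $\partial\mathcal C_{\omega_0}$ — to be the main obstacles; the rest is the classical Minkowski-type integration-by-parts argument.
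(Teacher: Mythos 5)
Your proposal is correct and follows essentially the same route as the paper: pass to the $\mathring g$-volume form with the weight $\varphi$ satisfying $\mathring\nabla_i\log\varphi=-\tfrac12\mathring g^{kl}Q_{ikl}$, integrate by parts, cancel the non-symmetric bulk terms by combining Lemma \ref{lemma:0=Qjijfi-Q+Q} with that weight identity, and kill the boundary term via \eqref{robin} together with the fact that the conormal is an eigendirection of $\mathcal{Q}^{ij}$ on $\partial\mathcal{C}_{\omega_0}$ (the paper proves this last fact as $\tau_{\alpha n}[f_k]=0$, its equation \eqref{equ:pf-V12=V21-3}, which is the one ingredient you assert rather than derive). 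The only differences are cosmetic: you antisymmetrize $I(f_0,f_1)-I(f_1,f_0)$ and extract the pointwise identity for $\mathring\nabla_j\mathcal{Q}^{ij}$, whereas the paper keeps the contracted form of Lemma \ref{lemma:0=Qjijfi-Q+Q} and exhibits a manifestly symmetric expression.
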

\begin{proof}
	The multi-linear property is obvious. By the symmetry of $\mathcal{Q}$, we just need to prove
	\begin{align*}
		V(f_0,f_1,f_2,\cdots,f_n)=V(f_1,f_0,f_2,\cdots,f_n).
	\end{align*}
		By the  $1$-homogeneity of $\mathcal{Q}$, we have
		\begin{align}
			\label{equ:pf-V12=V21-1}
			\mathcal{Q}=\mathcal{Q}^{ij}(A_1)_{ij}.
		\end{align}
		 From \cite{Xia13}*{Lemma 2.8}, for hypersurface $\mathcal{C}_{\omega_0},$ we assume $F(\nu)	\mathrm{d} {\mu_{g}}=\mathrm{d}{\mu}_F=\varphi\mathrm{d}\mu_{\mathring{g}}$, where $\mathrm{d}\mu_{\mathring{g}}$ is the induced volume form of $\mathcal{C}_{\omega_0}$ equipped with $\mathring{g}$, then
		\begin{align}
			\label{equ:pf-V12=V21-2}
			\partial_i\varphi=\varphi\cdot \partial_i\log\varphi=\varphi\sum_{k=1}^{n}A_{ik}^k(\mathcal{T}^{-1}(\xi))=-\frac{1}{2}\varphi\sum_{k=1}^{n}Q_{ikk},
		\end{align}
		where  $\{\partial_i\}_{i=1}^n$ is the  orthogonal frame (i.e., $\mathring{g}_{ij}(\xi)=\delta_{ij}$) which satisfies $e_{n}(\xi)=c(\xi)A_F(\nu(\xi))\mu(\xi)$ when $\xi\in\partial\mathcal{C}_{\omega_0}$. By \cite{Ding-Gao-Li-arxiv}*{Remark 3.1 (1)}, we know $e_{\alpha}\in T_{\xi}(\partial\mathcal{C}_{\omega_0})$ for $\xi\in\partial\mathcal{C}_{\omega_0}$ and $\alpha=1,\cdots,n-1$.
		
  On the other hand, if we let $X(\xi)=\mathring{\nabla}f_1(\xi)+f_1(\xi)\cdot \mathcal{T}^{-1}(\xi)$ for capillary convex function $f_1$ and any $\xi\in\partial\mathcal{C}_{\omega_0}$, then it follows from the proof of Proposition \ref{prop:xia-prpo2.6} that $\<X(\xi),E_{n+1}\>\equiv0$ and
		\begin{align}
			\tau_{\alpha n}[f_1](\xi)\overset{\eqref{derivative}}{=}&G(\mathcal{T}^{-1}(\xi))\(D_{e_{\alpha}}X,e_n\)=\frac{c(\xi)\<D_{e_{\alpha}}X,\mu\>}{F(\nu)}=\frac{c(\xi)\<D_{e_{\alpha}}X,E_{n+1}\>}{F(\nu)\<E_{n+1},\mu\>}\nonumber
			\\
			=&\frac{c(\xi){e_{\alpha}}(\<X,E_{n+1}\>)}{F(\nu)\<E_{n+1},\mu\>}=0, \label{equ:pf-V12=V21-3}	
		\end{align}
		where one uses the fact that $D_{e_{\alpha}}X\in T \mathcal{C}_{\omega_0}$ (by \eqref{derivative}),  $G(\nu_{{F}}(\xi))\(Y,A_F(\nu)\mu\)=\frac{\<Y,\mu\>}{F(\nu)}$ for any $ Y\in T \mathcal{C}_{\omega_0}$ (see \cite{Ding-Gao-Li-arxiv}*{eq. (3.29)}), and $E_{n+1}=\<E_{n+1},\nu\>\nu+\<E_{n+1},\mu\>\mu$.
		
 By Definition \ref{def:V(f1,.,fn)}, \eqref{equ:pf-V12=V21-1}$\thicksim$\eqref{equ:pf-V12=V21-3}, and divergence theorem, we have
	\begin{align*}
		&V\left(f_0, \cdots, f_{n}\right)=\frac{1}{n+1} \int_{\mathcal{C}_{\omega_0}} f_0 \mathcal{Q}\left(\tau \left[f_1\right], \cdots, \tau \left[f_{n}\right]\right) F(\nu)	\mathrm{d} {\mu_{g}}
		\\
		=&\frac{1}{n+1} \int_{\mathcal{C}_{\omega_0}}f_0\mathcal{Q}^{ij}\(\mathring{\nabla}_{i}\mathring{\nabla}_{j}f_1+\delta_{i j} f_1-\frac{1}{2}\sum_{k=1}^{n} Q_{i j k} \mathring{\nabla}_{k} f_1\)\varphi\mathrm{d}\mu_{\mathring{g}}
		\\
		=&\frac{1}{n+1} \int_{\mathcal{C}_{\omega_0}}
		-\varphi \mathcal{Q}^{ij}\mathring{\nabla}_{i}f_0 \mathring{\nabla}_{j}f_1- \varphi f_0\mathring{\nabla}_{j}\mathcal{Q}^{ij} \mathring{\nabla}_{i}f_1
		-f_0 \mathcal{Q}^{ij}\mathring{\nabla}_{j}f_1\partial_i\varphi
		\\
		&+\mathcal{Q}^{ij}\delta_{i j} f_1f_0\varphi-\frac{1}{2}\sum_{k=1}^{n}\mathcal{Q}^{ij} Q_{i j k} f_0\mathring{\nabla}_{k} f_1\varphi\ \mathrm{d}\mu_{\mathring{g}}
		+\frac{1}{n+1}\int_{\partial\mathcal{C}_{\omega_0}} f_0\varphi\mathcal{Q}^{in}\mathring{\nabla}_if_1 d \mathring{s}
		\\
		\overset{\eqref{robin}}{=}&\frac{1}{n+1} \int_{\mathcal{C}_{\omega_0}}
		-\varphi \mathcal{Q}^{ij}\mathring{\nabla}_{i}f_0 \mathring{\nabla}_{j}f_1
		+\mathcal{Q}^{ij}\delta_{i j} f_1f_0\varphi
		\\
		&-f_0\varphi\( \mathring{\nabla}_{j}\mathcal{Q}^{ij} \mathring{\nabla}_{i}f_1
		-\frac{1}{2}\sum_{k=1}^{n}Q_{ikk} \mathcal{Q}^{ij}\mathring{\nabla}_{j}f_1+\frac{1}{2}\sum_{k=1}^{n}\mathcal{Q}^{ij} Q_{i j k} \mathring{\nabla}_{k} f_1\)\ \mathrm{d}\mu_{\mathring{g}}
		\\
		&+\frac{1}{n+1}\int_{\partial\mathcal{C}_{\omega_0}} f_0\varphi\mathcal{Q}^{nn}\frac{c(\xi)\omega_0}{F(\nu(\xi))\<\mu(\xi),E_{n+1}\>}f_1 d \mathring{s}
		\\
			\overset{\eqref{equ:0=Qjijfi-Q+Q}}{=}&
		\frac{1}{n+1} \int_{\mathcal{C}_{\omega_0}}
		- \mathcal{Q}^{ij}\mathring{\nabla}_{i}f_0 \mathring{\nabla}_{j}f_1
		+\mathcal{Q}^{ij}\delta_{i j} f_1f_0 F(\nu)	\mathrm{d} {\mu_{g}}
		\\
		&+\frac{1}{n+1}\int_{\partial\mathcal{C}_{\omega_0}} f_0f_1 \varphi\mathcal{Q}^{nn}\frac{c(\xi)\omega_0}{F(\nu(\xi))\<\mu(\xi),E_{n+1}\>}d \mathring{s},
	\end{align*}
	which   is symmetric in $f_0$ and $f_1$, where $d \mathring{s}$ is the  area element induced on the boundary $\partial \mathcal{C}_{\omega_0}$
	from  $(\mathcal{C}_{\omega_0},\mathring{g})$. We finish the proof of the lemma.
\end{proof}
\begin{remark}
	By adapting the technique of  \cite{Xia-arxiv}*{Corollary 2.10}, since  $\hat{s}_o$ (defined by \eqref{equ:s0}) is anisotropic capillary support function of $\mathcal{C}_{\omega_0}$  and $\tau[\hat{s}_o]=I_n$ (see \eqref{equ:A[l]}),
 the anisotropic capillary Minkowski formula (see \cite{Jia-Wang-Xia-Zhang2023}*{Theorem 1.3}) can also be derived from Proposition \ref{lemma:V12=V21}.
\end{remark}

 In terms of the construction of mixed volumes for standard capillary convex bodies in  \cite{Xia-arxiv}, we similarly consider the combination of anisotropic capillary convex bodies $K:=\sum_{i=1}^m \lambda_i K_i$  for $K_1, \cdots, K_m \in \mathcal{K}_{\omega_0}$.  The mixed volume $V\left(K_{i_0}, \cdots, K_{i_{n}}\right)$ defined by
\begin{align}
	|K|=\sum_{i_0, \cdots, i_{n}=1}^m \lambda_{i_0} \cdots \lambda_{i_{n}} V\left(K_{i_0}, \cdots, K_{i_{n}}\right),\label{equ:|K|}
\end{align}
for anisotropic capillary convex bodies in $\mathcal{K}_{\omega_0}$ can be written as an integral over $\mathcal{C}_{\omega_0}$ by using the anisotropic  capillary support function.

\begin{proposition}\label{prop7.3}
	Let $K_1, \cdots, K_m \in \mathcal{K}_{\omega_0}$ and $\hat{s}_i$ be the anisotropic  capillary support function of $K_i$. Then we have
	\begin{align}\label{equ:VKKKLLL}
		V\left(K_{i_0}, \cdots, K_{i_{n}}\right)&=V\left(\hat{s}_{i_0}, \cdots, \hat{s}_{i_{n}}\right)
		\\
		&=\frac{1}{n+1} \int_{\mathcal{C}_{\omega_0}} \hat{s}_{i_0} \mathcal{Q}\left(A\left[\hat{s}_{i_2}\right], \cdots, A\left[\hat{s}_{i_{n}}\right]\right) F(\nu)	\mathrm{d} {\mu_{g}}
	   .\nonumber
	\end{align}
\end{proposition}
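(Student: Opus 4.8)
The plan is to deduce the proposition from a single volume identity: for every anisotropic $\omega_0$-capillary convex body $\widehat{\Sigma}\in\mathcal{K}_{\omega_0}$, with $\hat{s}$ its anisotropic capillary support function, one should have $|\widehat{\Sigma}|=V(\hat{s},\cdots,\hat{s})$, where the right-hand side has $n+1$ entries all equal to $\hat{s}$ and $V$ is as in Definition \ref{def:V(f1,.,fn)}. Granting this, the rest is purely formal. For $K_1,\cdots,K_m\in\mathcal{K}_{\omega_0}$ and weights $\lambda_i\ge 0$ with $\sum_i\lambda_i>0$, Proposition \ref{prop:sK=sumSi} gives $K:=\sum_i\lambda_iK_i\in\mathcal{K}_{\omega_0}$ with $\hat{s}_K=\sum_i\lambda_i\hat{s}_{K_i}$. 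Since $\tau[\cdot]$ is linear and the mixed discriminant $\mathcal{Q}$ is multilinear in its matrix arguments by \eqref{equ:QA}, the functional $V$ of Definition \ref{def:V(f1,.,fn)} is multilinear on $\big(C^2(\mathcal{C}_{\omega_0})\big)^{n+1}$, so the volume identity yields
\[
|K|=V(\hat{s}_K,\cdots,\hat{s}_K)=\sum_{i_0,\cdots,i_n=1}^{m}\lambda_{i_0}\cdots\lambda_{i_n}\,V(\hat{s}_{i_0},\cdots,\hat{s}_{i_n}).
\]
Comparing this with the defining relation \eqref{equ:|K|}, and noting that each $\hat{s}_{i_j}$ is an anisotropic capillary convex function (Proposition \ref{prop:xia-prpo2.6}), so that $V(\hat{s}_{i_0},\cdots,\hat{s}_{i_n})$ is symmetric in its entries by Lemma \ref{lemma:V12=V21}, one reads off $V(K_{i_0},\cdots,K_{i_n})=V(\hat{s}_{i_0},\cdots,\hat{s}_{i_n})$ for every multi-index; the last equality in \eqref{equ:VKKKLLL} is then precisely Definition \ref{def:V(f1,.,fn)}.

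To prove the volume identity I would apply the divergence theorem to the position vector field $X$ (with $\operatorname{div}X=n+1$) on $\widehat{\Sigma}$, whose boundary is $\Sigma$ together with the flat piece $\widehat{\partial\Sigma}\subset\{x_{n+1}=0\}$; there the outward normal is $-E_{n+1}$ and $\<X,-E_{n+1}\>=0$, so
\[
(n+1)|\widehat{\Sigma}|=\int_{\Sigma}\<X,\nu\>\,d\mu_g=\int_{\Sigma}\hat{u}\,d\mu_F,
\]
using $\<X,\nu\>=u=\hat{u}F(\nu)$ from \eqref{u-hatu} and $d\mu_F=F(\nu)\,d\mu_g$ from \eqref{equ:duF}. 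I would then parametrize $\Sigma$ by the anisotropic capillary Gauss map, $X=\widetilde{\nu_F}^{-1}:\mathcal{C}_{\omega_0}\to\Sigma$, as in Lemma \ref{Lemma:gaussmap}(2) and Lemma \ref{lemma:Xia-Lemma2.4}. Since $\nu_F(X(\xi))=\mathcal{T}^{-1}(\xi)$, the Euclidean unit normal $\nu(X(\xi))=\Psi^{-1}(\mathcal{T}^{-1}(\xi))$ is exactly the outward unit normal of $\mathcal{C}_{\omega_0}$ at $\xi$; hence $T_{X(\xi)}\Sigma=T_\xi\mathcal{C}_{\omega_0}$ as subspaces of $\mathbb{R}^{n+1}$ carrying the same induced Euclidean metric, and $\hat{u}\circ X$, $F(\nu)\circ X$ descend to $\hat{s}$, $F(\nu)$ on $\mathcal{C}_{\omega_0}$. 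Consequently the Euclidean area element of $\Sigma$ pulls back to $|\det dX|\,d\mu_g$ on $\mathcal{C}_{\omega_0}$, with $dX$ read as an endomorphism of this common tangent space. As $d\widetilde{\nu_F}=S_F$, we have $dX=S_F^{-1}$, whose determinant is the product of the anisotropic principal curvature radii of $\Sigma$; by Lemma \ref{lemma:Xia-Lemma2.4}(3) these radii are the eigenvalues of $\tau[\hat{s}]$ relative to $\mathring{g}$, so $\det dX=\mathcal{Q}\big(A[\hat{s}],\cdots,A[\hat{s}]\big)$ by the normalization in Definition \ref{def:V(f1,.,fn)} (recall $A[\hat{s}]=B\,\tau[\hat{s}]\,B^{T}$ with $BB^{T}=\mathring{g}^{-1}$, and $\mathcal{Q}(C,\cdots,C)=\det C$ from \eqref{equ:QA}). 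Thus $(n+1)|\widehat{\Sigma}|=\int_{\mathcal{C}_{\omega_0}}\hat{s}\,\mathcal{Q}\big(A[\hat{s}],\cdots,A[\hat{s}]\big)F(\nu)\,d\mu_g=(n+1)V(\hat{s},\cdots,\hat{s})$, as wanted.

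The main obstacle is exactly this change of variables in the volume identity — pushing the Euclidean area element of $\Sigma$ forward under the anisotropic capillary Gauss map and recognizing the resulting density as the mixed discriminant $\mathcal{Q}(A[\hat{s}],\cdots,A[\hat{s}])$. The asymmetry of the anisotropic Weingarten operator $S_F$, which forced the technical detour behind Lemma \ref{lemma:0=Qjijfi-Q+Q}, does not intervene here, since only the scalar invariant $\det S_F^{-1}$ is needed and this is independent of any choice of basis or metric; the care is instead in verifying that $\Sigma$ and $\mathcal{C}_{\omega_0}$ share tangent plane and induced Euclidean metric at corresponding points, which follows at once from $\nu_F(X(\xi))=\mathcal{T}^{-1}(\xi)$. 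A minor, routine point is that a nonnegative combination of anisotropic capillary convex functions with positive total weight is again such a function (this is built into Proposition \ref{prop:sK=sumSi}), so Lemma \ref{lemma:V12=V21} may legitimately be invoked for all the $\hat{s}_{i_j}$ in the expansion.
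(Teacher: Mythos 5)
Your proposal is correct and follows essentially the same route as the paper: express $(n+1)|K|$ via the divergence theorem as $\int_\Sigma\langle X,\nu\rangle\,d\mu_g$, push this forward under the anisotropic capillary Gauss map to $\int_{\mathcal{C}_{\omega_0}}\hat{s}_K\det(A[\hat{s}_K])F(\nu)\,d\mu_g$ using Lemma \ref{lemma:Xia-Lemma2.4}(3), expand by $\hat{s}_K=\sum_i\lambda_i\hat{s}_{K_i}$ and the multilinearity of $\mathcal{Q}$, and compare coefficients with \eqref{equ:|K|}. Your only departures are organizational and in fact fill in details the paper leaves implicit, namely the justification of the Jacobian $\det dX=\mathcal{Q}(A[\hat{s}],\cdots,A[\hat{s}])$ and the observation that Lemma \ref{lemma:V12=V21} is what legitimizes reading off the (symmetrized) coefficients.
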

\begin{proof}
Denote $K:=\sum_{i=1}^m \lambda_i K_i$ and $\Sigma=\partial K\cap \mathbb{R}_+^{n+1}$.	Since $\widetilde{\nu_F}:\Sigma\to \mathcal{C}_{\omega_0}$ is a diffeomorphism and $\hat{s}_K(\xi)=\hat{u}_K(\widetilde{\nu_{{F}}}^{-1}(\xi))$ for $\xi\in\mathcal{C}_{\omega_0}$, we have 
	\begin{align*}
		(n+1)|K|=\int_{\Sigma}\<X,\nu\>\mathrm{~d}\mu_g=\int_{\Sigma}\frac{\<X,\nu\>}{F(\nu)}\mathrm{~d}\mu_F=\int_{\mathcal{C}_{\omega_0}}
		\hat{s}_K\det(A[\hat{s}_K])F(\nu)	\mathrm{d} {\mu_{g}}.
	\end{align*}
	By  \eqref{equ:sK=sumSi}, $\hat{s}_K=\sum_{i=1}^{m}\lambda_i\hat{s}_i$, then we obtain
		\begin{align*}
		(n+1)|K|=&\int_{\mathcal{C}_{\omega_0}}
		\hat{s}_K\det(A[\hat{s}_K])F(\nu)	\mathrm{d} {\mu_{g}}
		\\
		=&\int_{\mathcal{C}_{\omega_0}}
		\(\sum_{i=1}^{m}\lambda_i\hat{s}_i\)\det(\lambda_iA[\hat{s}_1]+\cdots\lambda_mA[\hat{s}_m])F(\nu)	\mathrm{d} {\mu_{g}}
		\\
		\overset{\eqref{equ:QA}}{=}&\int_{\mathcal{C}_{\omega_0}}
		\(\sum_{i=1}^{m}\lambda_i\hat{s}_i\)
		\(\sum_{i_1, \cdots, i_n=1}^m \lambda_{i_1} \cdots \lambda_{i_n} \mathcal{Q}\left(A[{\hat{s}_{i_1}}], \cdots, A[\hat{s}_{i_n}]\right)\)
		F(\nu)	\mathrm{d} {\mu_{g}}
		\\
		=&\sum_{i_0, \cdots, i_n=1}^m \lambda_{i_0} \cdots \lambda_{i_n} \int_{\mathcal{C}_{\omega_0}} \hat{s}_{i_0} \mathcal{Q}
		\left(A[{\hat{s}_{i_1}}], \cdots, A[\hat{s}_{i_n}]\right)
	F(\nu)	\mathrm{d} {\mu_{g}}.
	\end{align*}
	Combining with \eqref{equ:|K|}, we have \eqref{equ:VKKKLLL}.
\end{proof}

Specially, from \cite{Ding-Gao-Li-arxiv}*{Lemma 7.6 (i)}, we can prove the following lemma.
\begin{lemma}\label{lemma:V-k=V(KKK,LLL)}
	Let $\widehat{\Sigma}\in\mathcal{K}_{\omega_0}$ be an anisotropic capillary convex body with anisotropic capillary support function $\hat{s}$ in $\mathcal{C}_{\omega_0}$. Then for $-1\leq k\leq n$, we have
	\begin{align*}
		\mathcal{V}_{k+1,\omega_0}(\Sigma)	=V\(\underbrace{\widehat{\Sigma}, \cdots, \widehat{\Sigma}}_{(n-k) \text { copies }}, \underbrace{\widehat{\mathcal{C}}_{\omega_0}, \cdots, \widehat{\mathcal{C}}_{\omega_0}}_{(k+1) \text { copies }}\).
	\end{align*}
\end{lemma}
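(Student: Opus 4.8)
The plan is to combine the alternative integral formula for the anisotropic capillary quermassintegrals recalled in the introduction with the integral representation of mixed volumes from Proposition \ref{prop7.3}. First I would invoke \cite{Ding-Gao-Li-arxiv}*{Lemma 7.4 (i)}, which gives
\[
\mathcal{V}_{k+1,\omega_0}(\Sigma)=\frac{1}{n+1}\int_{\Sigma}H_k^F\(F(\nu)+\omega_0\<\nu,E_{n+1}^F\>\)\,d\mu_g,
\]
and then transport the integral to $\mathcal{C}_{\omega_0}$ via the anisotropic capillary Gauss map $\widetilde{\nu_F}:\Sigma\to\mathcal{C}_{\omega_0}$, which is a diffeomorphism by Lemma \ref{Lemma:gaussmap} (2). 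Under this reparametrization $X=\widetilde{\nu_F}^{-1}(\xi)$, the anisotropic support function $\hat u$ pulls back to the anisotropic capillary support function $\hat s$, and $F(\nu)+\omega_0\<\nu,E_{n+1}^F\>$ becomes (up to the factor $F(\nu)$) the quantity $\hat s_o(\xi)$ of \eqref{equ:s0}; more precisely $F(\nu)+\omega_0\<\nu,E_{n+1}^F\> = F(\nu)\,\hat s_o$ by \eqref{equ:G(vF,Y)=<v,Y>/F}. So I would rewrite
\[
\mathcal{V}_{k+1,\omega_0}(\Sigma)=\frac{1}{n+1}\int_{\mathcal{C}_{\omega_0}}\hat s_o\,H_k^F\,F(\nu)\,d\mu_g .
\]

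The second key point is to identify $H_k^F$ in terms of mixed discriminants of the matrices $A[\hat s]$ and $A[\hat s_o]$. By Lemma \ref{lemma:Xia-Lemma2.4} (3) the anisotropic principal curvature radii of $\Sigma$ are the eigenvalues of $\tau_{ij}[\hat s]$ with respect to $\mathring g$, i.e.\ of $A[\hat s]$; since $\mathcal{C}_{\omega_0}$ has $\kappa^F\equiv(1,\dots,1)$ we have $\tau[\hat s_o]=I_n$, hence $A[\hat s_o]=I_n$ (this is the identity labelled \eqref{equ:A[l]} referenced in the Remark after Lemma \ref{lemma:V12=V21}). Writing $A:=A[\hat s]$, the normalized anisotropic $k$-mean curvature of $\Sigma$, expressed through the curvature radii $A^{-1}$, satisfies
\[
H_k^F\det(A)=\binom{n}{k}^{-1}\sigma_{n-k}(A)=\mathcal{Q}\bigl(\underbrace{A,\dots,A}_{n-k},\underbrace{I_n,\dots,I_n}_{k}\bigr),
\]
where the last equality is the standard polarization identity \eqref{equ:QA} together with $A[\hat s_o]=I_n$. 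Combining this with the change of area element $F(\nu)\,d\mu_g$ (equivalently, noting $\det(A)\,F(\nu)\,d\mu_g$ is the correct Jacobian factor, exactly as in the proof of Proposition \ref{prop7.3}) yields
\[
\mathcal{V}_{k+1,\omega_0}(\Sigma)=\frac{1}{n+1}\int_{\mathcal{C}_{\omega_0}}\hat s_o\,\mathcal{Q}\bigl(\underbrace{A[\hat s],\dots,A[\hat s]}_{n-k},\underbrace{A[\hat s_o],\dots,A[\hat s_o]}_{k}\bigr)F(\nu)\,d\mu_g .
\]

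By Definition \ref{def:V(f1,.,fn)} the right-hand side is precisely $V(\hat s_o,\hat s,\dots,\hat s,\hat s_o,\dots,\hat s_o)$ with $\hat s$ appearing $n-k$ times and $\hat s_o$ appearing $k+1$ times in total; by the symmetry established in Lemma \ref{lemma:V12=V21} this equals $V(\underbrace{\hat s,\dots,\hat s}_{n-k},\underbrace{\hat s_o,\dots,\hat s_o}_{k+1})$, and finally Proposition \ref{prop7.3} identifies this with $V(\underbrace{\widehat\Sigma,\dots,\widehat\Sigma}_{n-k},\underbrace{\widehat{\mathcal{C}}_{\omega_0},\dots,\widehat{\mathcal{C}}_{\omega_0}}_{k+1})$, since $\hat s$ is the anisotropic capillary support function of $\widehat\Sigma$ and $\hat s_o$ that of $\widehat{\mathcal{C}}_{\omega_0}$. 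The boundary cases $k=-1$ (reducing to $\mathcal{V}_{0,\omega_0}=|\widehat\Sigma|=V(\widehat\Sigma,\dots,\widehat\Sigma)$) and $k=n$ (reducing to the anisotropic area term) are checked directly. I expect the main obstacle to be the bookkeeping in the second step: carefully justifying that the Jacobian of $\widetilde{\nu_F}^{-1}$ together with the factor $F(\nu)$ produces exactly $\det(A[\hat s])$, and that the curvature radii versus curvatures are correctly dualized so that $H_k^F\det(A)$ becomes the mixed discriminant with the right number of $I_n$'s — this is where the asymmetry of the anisotropic Weingarten operator \eqref{equ:S_F} could cause sign or index errors, and it needs to be reconciled with \cite{Ding-Gao-Li-arxiv}*{Lemma 7.6 (i)} which is cited as the source.
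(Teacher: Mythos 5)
Your proposal is correct and follows essentially the same route as the paper: the alternative integral formula for $\mathcal{V}_{k+1,\omega_0}$ from \cite{Ding-Gao-Li-arxiv}, the identification $\tau[\hat{s}_o]=I_n$ (which the paper verifies by the explicit computation \eqref{equ:pf-lemma3.20-1}--\eqref{equ:A[l]} rather than by appealing to $\kappa^F\equiv(1,\dots,1)$, so as to also record Remark \ref{rk:3}), the identity $H_k^F\det(A[\hat{s}])=\mathcal{Q}(A[\hat{s}],\dots,A[\hat{s}],I_n,\dots,I_n)$ under the capillary Gauss map, and Proposition \ref{prop7.3}, with the case $k=-1$ treated separately. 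Only your intermediate display $\mathcal{V}_{k+1,\omega_0}(\Sigma)=\frac{1}{n+1}\int_{\mathcal{C}_{\omega_0}}\hat{s}_o H_k^F F(\nu)\,\mathrm{d}\mu_g$ is literally missing the Jacobian $\det(A[\hat{s}])$ of $\widetilde{\nu_F}^{-1}$, but you flag and correctly absorb this factor in the final formula, so the argument is sound.
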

\begin{proof}
	For $k=-1$, it is easy to check that
	\begin{align*}
		V\left(\widehat{\Sigma},\cdots,\widehat{\Sigma}\right)=&\frac{1}{n+1} \int_{\mathcal{C}_{\omega_0}} \hat{s} \mathcal{Q}\left(A\left[\hat{s}\right], \cdots, A\left[\hat{s}\right]\right) F(\nu)	\mathrm{d} {\mu_{g}}
		\\
		=&\frac{1}{n+1} \int_{\mathcal{C}_{\omega_0}} \hat{s} \det(A[\hat{s}]) F(\nu)	\mathrm{d} {\mu_{g}}
		\\
		=&\frac{1}{n+1} \int_{\Sigma} \hat{u}  d {\mu}_F
		\\
		=&|\widehat{\Sigma}|=\mathcal{V}_{0,\omega_0}(\Sigma),
	\end{align*}
 where we use	the fact that the support function of hypersurface  ${\partial\Sigma}\subset\partial\overline{\mathbb{R}^{n+1}_+}$ in $\mathbb{R}^{n+1}$ is everywhere zero.

 Now we conside the case  $k=0,\cdots,n$.
	Since the anisotropic support function of $\W$ is constant $1$, the anisotropic support function of $\widetilde{\W}=\W+\omega_0E_{n+1}^F$ is $$\hat{u}_0(X)=1+\omega_0G(\nu_{{F}}(X))\(E_{n+1}^F,\nu_{{F}}(X)\),$$ for $X\in\widetilde{\W}$. Then the anisotropic capillary support function of $\mathcal{C}_{\omega_0}=\widetilde{\W}\cap\overline{\mathbb{R}^{n+1}_+}$ is
$$\hat{s}_o(\xi)=\hat{u}_0\left(\widetilde{\nu_F}^{-1}(\xi)\right)=1+\omega_0G(\mathcal{T}^{-1}(\xi))\(E_{n+1}^F,\mathcal{T}^{-1}(\xi)\),\quad \xi\in\mathcal{C}_{\omega_0}.$$
Choose the normal coordinate $\{e_i=\partial_i\xi\}_{i=1}^n\in T_{\xi}\mathcal{C}_{\omega_0}$, by Lemma \ref{lemma:Gauss-Weingarten} we have
\begin{align}
	\mathring{\nabla}_{e_i}\left(
	G(\mathcal{T}^{-1}(\xi))\(E_{n+1}^F,\mathcal{T}^{-1}(\xi)\)
	\right)=&G(\mathcal{T}^{-1}(\xi))\(E_{n+1}^F,\partial_i(\mathcal{T}^{-1}(\xi))\)\nonumber
	\\
	=&G(\mathcal{T}^{-1}(\xi))\(E_{n+1}^F,e_i\),\label{equ:pf-lemma3.20-1}
\end{align}
and
\begin{align}
	&\mathring{\nabla}_{e_j}\mathring{\nabla}_{e_i}\left(
	G(\mathcal{T}^{-1}(\xi))\(E_{n+1}^F,\mathcal{T}^{-1}(\xi)\)
	\right)=\mathring{\nabla}_{e_j}\left(G(\mathcal{T}^{-1}(\xi))\(E_{n+1}^F,e_i\)\right)\nonumber
	\\
	=&G(\mathcal{T}^{-1}(\xi))\(E_{n+1}^F,\partial_j\partial_i\xi\)+Q(\mathcal{T}^{-1}(\xi))\(E_{n+1}^F,e_i,\partial_j(\mathcal{T}^{-1}(\xi))\)\nonumber
	\\
	=&G(\mathcal{T}^{-1}(\xi))\(E_{n+1}^F,-\delta_{i j} \mathcal{T}^{-1}(\xi)-\frac{1}{2}  Q_{i j k} e_k\)
	+Q(\mathcal{T}^{-1}(\xi))\(E_{n+1}^F,e_i,e_j\)	\nonumber
	\\
	=&-\delta_{i j}G(\mathcal{T}^{-1}(\xi))\(E_{n+1}^F, \mathcal{T}^{-1}(\xi)\)
	\label{equ:pf-lemma3.20-2}+\frac{1}{2}  Q_{i j k}G(\mathcal{T}^{-1}(\xi))\(E_{n+1}^F, e_k\).
\end{align}
Combination of \eqref{equ:pf-lemma3.20-1} and \eqref{equ:pf-lemma3.20-2} gives
\begin{align}\label{equ:A[X,v]}
	\tau_{ij}\left[G(\mathcal{T}^{-1}(\xi))\(E_{n+1}^F,\mathcal{T}^{-1}(\xi)\)\right]=0,
\end{align}
and then
\begin{align}\label{equ:A[l]}
	\tau[\hat{s}_o]=\tau[1]=[\delta_{i j}]=:I_n.
\end{align}
	
	From \cite{Ding-Gao-Li-arxiv}*{Lemma 7.6 (i)}, we can calculate that
	\begin{align*}
		\mathcal{V}_{k+1,\omega_0}(\Sigma)=&\frac{1}{n+1}\int_{\Sigma}H_k^F\(1+\omega_0G(\nu_{{F}})(\nu_{{F}},E_{n+1}^F)\) {\rm d}\mu_F
		\\
		=&\frac{1}{n+1}\int_{\mathcal{C}_{\omega_0}}\hat{s}_o\mathcal{Q}(
		 (\underbrace{\tau[\hat{s}],\cdots,\tau[\hat{s}]}_{(n-k) \text{ copies }},\underbrace{I_n,\cdots,I_n}_{k \text{ copies }})
		) F(\nu)	\mathrm{d} {\mu_{g}}
		\\
		=&V\(\underbrace{\hat{s}, \cdots, \hat{s}}_{(n-k) \text { copies }}, \underbrace{\hat{s}_o, \cdots, \hat{s}_o}_{(k+1) \text { copies }}\).
	\end{align*}
	Then we complete the proof.
\end{proof}
\begin{remark}\label{rk:3}
	In the proof process of \eqref{equ:A[X,v]}, it can be observed that $E_{n+1}^F$ in \eqref{equ:A[X,v]} may be substituted with an arbitrary constant vector in $\mathbb{R}^{n+1}$.
\end{remark}
From \eqref{equ:|K|} and Lemma \ref{lemma:V-k=V(KKK,LLL)}, we have a direct consequence (Steiner-type formula) as follows, which extends the result of \cite{Xia-arxiv}*{Proposition 2.16} to anisotropic capillary convex bodies.
\begin{lemma}
	Let $\widehat{\Sigma}\in\mathcal{K}_{\omega_0}$ be associated with a convex anisotropic $\omega_0$-capillary hypersurface $\Sigma$. Then for any $t>0$, there holds
	\begin{align*}
		|\widehat{\Sigma}+t\widehat{\mathcal{C}}_{\omega_0}|=\sum_{k=0}^{n+1}\binom{n+1}{k}t^k\mathcal{V}_{k,\omega_0}(\Sigma).
	\end{align*}
\end{lemma}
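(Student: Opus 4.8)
The plan is to obtain the formula directly from the Minkowski--combination definition of mixed volumes \eqref{equ:|K|}, together with the symmetry of $V$ (Lemma \ref{lemma:V12=V21}) and the identification of mixed volumes with anisotropic capillary quermassintegrals in Lemma \ref{lemma:V-k=V(KKK,LLL)}. First I would note that, since $\widehat{\mathcal{C}}_{\omega_0}\in\mathcal{K}_{\omega_0}$, Proposition \ref{prop:sK=sumSi} applied with $m=2$, $K_1=\widehat{\Sigma}$, $K_2=\widehat{\mathcal{C}}_{\omega_0}$, $\lambda_1=1$, $\lambda_2=t>0$ shows that $\widehat{\Sigma}+t\widehat{\mathcal{C}}_{\omega_0}\in\mathcal{K}_{\omega_0}$, so that \eqref{equ:|K|} is applicable with $K=\widehat{\Sigma}+t\widehat{\mathcal{C}}_{\omega_0}$ and yields
\[
|\widehat{\Sigma}+t\widehat{\mathcal{C}}_{\omega_0}|=\sum_{i_0,\cdots,i_n=1}^{2}\lambda_{i_0}\cdots\lambda_{i_n}\,V(K_{i_0},\cdots,K_{i_n}).
\]

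Next I would reorganize this sum by the number $j\in\{0,1,\cdots,n+1\}$ of indices among $i_0,\cdots,i_n$ that are equal to $2$. By Lemma \ref{lemma:V12=V21} the value of $V(K_{i_0},\cdots,K_{i_n})$ depends only on $j$, there are $\binom{n+1}{j}$ multi-indices with exactly $j$ twos, and each such term carries the factor $\lambda_1^{\,n+1-j}\lambda_2^{\,j}=t^{j}$, so the right-hand side collapses to $\sum_{j=0}^{n+1}\binom{n+1}{j}t^{j}\,V(\underbrace{\widehat{\Sigma},\cdots,\widehat{\Sigma}}_{n+1-j},\underbrace{\widehat{\mathcal{C}}_{\omega_0},\cdots,\widehat{\mathcal{C}}_{\omega_0}}_{j})$. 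Applying Lemma \ref{lemma:V-k=V(KKK,LLL)} with $k=j-1$ (so that $-1\le k\le n$, $n-k=n+1-j$, $k+1=j$) identifies the mixed volume in the $j$-th term with $\mathcal{V}_{j,\omega_0}(\Sigma)$; relabelling $j$ as $k$ then gives the claimed expansion, with the endpoints $j=0$ and $j=n+1$ reproducing $\mathcal{V}_{0,\omega_0}(\Sigma)=|\widehat{\Sigma}|$ and $\mathcal{V}_{n+1,\omega_0}(\Sigma)=|\widehat{\mathcal{C}}_{\omega_0}|$, as expected.

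I do not anticipate any genuine obstacle here: once Lemmas \ref{lemma:V12=V21} and \ref{lemma:V-k=V(KKK,LLL)} are in hand, the statement is pure binomial bookkeeping. The only point requiring a moment's care is the specialization of \eqref{equ:|K|} to $m=2$ and checking that the multi-index sum reindexes as a binomial sum. Alternatively, one could bypass \eqref{equ:|K|} altogether: by \eqref{equ:sK=sumSi} the anisotropic capillary support function of $\widehat{\Sigma}+t\widehat{\mathcal{C}}_{\omega_0}$ is $\hat{s}+t\hat{s}_o$, so substituting into the identity $(n+1)|K|=\int_{\mathcal{C}_{\omega_0}}\hat{s}_K\det(A[\hat{s}_K])F(\nu)\,\mathrm{d}\mu_g$ from the proof of Proposition \ref{prop7.3}, expanding $\det(A[\hat{s}]+tI_n)$ via \eqref{equ:QA} and \eqref{equ:A[l]}, and recognizing each coefficient through Lemma \ref{lemma:V-k=V(KKK,LLL)} gives the same conclusion.
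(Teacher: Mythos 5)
Your argument is correct and is exactly the route the paper intends: the paper states this lemma without proof as a ``direct consequence'' of \eqref{equ:|K|} and Lemma \ref{lemma:V-k=V(KKK,LLL)}, and your specialization to $m=2$, $\lambda_1=1$, $\lambda_2=t$, followed by the binomial reindexing using the symmetry from Lemma \ref{lemma:V12=V21}, is precisely that omitted bookkeeping. The index match with Lemma \ref{lemma:V-k=V(KKK,LLL)} (setting $j=k+1$) is handled correctly, so nothing is missing.
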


\section{The Alexandrov-Fenchel inequalities for mixed volume}\label{sec 4}
Based on the preparations in Section \ref{sec 3} (including definitions and properties of anisotropic capillary convex bodies in the anisotropic setting), we can prove the main Theorem \ref{thm:A-F-convex} following the process in \cite{Xia-arxiv}, or adapting the idea of \cite{Shenfeld19,Shenfeld22}.

We first derive the following lemma, which is inspired by \cite{Xia13}*{Lemma 5.4}.
\begin{lemma}
	\label{lemma:ker-A}
	Let $f(\xi)\in C^2(\mathcal{C}_{\omega_0})$ be a function such that $\tau[f]=0$. Then
	\begin{align}\label{equ:f=1-n+1}
		f=\sum_{\alpha=1}^{n+1}a_{\alpha}G(\mathcal{T}^{-1}(\xi))(\mathcal{T}^{-1}(\xi),E_{\alpha}),
	\end{align}
	for some constants $a_1,\cdots,a_{n+1}$.
	Furthermore, if $f$ also satisfies the boundary condition \eqref{robin}, then $a_{n+1}=0$, that is
	\begin{align*}
		f=\sum_{i=1}^{n}a_{i}G(\mathcal{T}^{-1}(\xi))(\mathcal{T}^{-1}(\xi),E_{i}),
	\end{align*}
	for some constants $a_1,\cdots,a_{n}$.
\end{lemma}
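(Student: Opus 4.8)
The plan is to exploit the computation \eqref{derivative} carried out in the proof of Proposition \ref{prop:xia-prpo2.6}. That computation used only Lemma \ref{lemma:Gauss-Weingarten} and never the positivity of $\tau$, so it is valid for \emph{any} $f\in C^2(\mathcal{C}_{\omega_0})$: the $C^1$ map $\Phi\colon\mathcal{C}_{\omega_0}\to\mathbb{R}^{n+1}$ defined by $\Phi(\xi):=\mathring{\nabla}f(\xi)+f(\xi)\,\mathcal{T}^{-1}(\xi)$ satisfies $D_{e_i}\Phi|_\xi=\sum_{j}\tau_{ij}[f](\xi)\,e_j$ in a local orthonormal frame $\{e_i\}$ of $(\mathcal{C}_{\omega_0},\mathring{g})$. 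Hence the hypothesis $\tau[f]=0$ forces $D\Phi\equiv 0$, and since $\mathcal{C}_{\omega_0}$ --- a cap of the strictly convex Wulff shape $\W$ cut by $\{x_{n+1}=0\}$ --- is connected, $\Phi$ must be constant, say $\Phi\equiv X_0$ for some vector $X_0\in\mathbb{R}^{n+1}$.

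Next I would recover \eqref{equ:f=1-n+1} by pairing the identity $\mathring{\nabla}f(\xi)+f(\xi)\,\mathcal{T}^{-1}(\xi)\equiv X_0$ with the anisotropic normal $\mathcal{T}^{-1}(\xi)\in\W$ in the metric $G(\mathcal{T}^{-1}(\xi))$. Since $\mathring{\nabla}f(\xi)\in T_\xi\mathcal{C}_{\omega_0}=T_{\mathcal{T}^{-1}(\xi)}\W$, the relations $G(z)(z,V)=0$ for $V\in T_z\W$ and $G(z)(z,z)=1$ recalled in Subsection \ref{subsec 2.3} give $f(\xi)=G(\mathcal{T}^{-1}(\xi))(\mathcal{T}^{-1}(\xi),X_0)$. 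Expanding $X_0=\sum_{\alpha=1}^{n+1}a_\alpha E_\alpha$ with $a_\alpha:=\langle X_0,E_\alpha\rangle$ and using the bilinearity of $G(\mathcal{T}^{-1}(\xi))$ in its two arguments then produces exactly \eqref{equ:f=1-n+1}.

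For the refined statement, I would note that the boundary computation already performed inside the proof of Proposition \ref{prop:xia-prpo2.6} applies verbatim to $\Phi$: using the decomposition $E_{n+1}=\langle E_{n+1},\nu\rangle\nu+\langle E_{n+1},\mu\rangle\mu$, the identity $\langle\mathcal{T}^{-1}(\xi),E_{n+1}\rangle=-\omega_0$ on $\partial\mathcal{C}_{\omega_0}$, the relation $G(\nu_F)(Y,\mu_F)=\langle Y,\mu\rangle/F(\nu)$, and the Robin condition \eqref{robin}, one obtains $\langle\Phi(\xi),E_{n+1}\rangle=0$ for all $\xi\in\partial\mathcal{C}_{\omega_0}$ (a nonempty set, since $\omega_0<F(-E_{n+1})$). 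As $\Phi\equiv X_0$, this forces $a_{n+1}=\langle X_0,E_{n+1}\rangle=0$, which is the claim. I do not expect a genuine obstacle here: the only steps needing an explicit line are the remark that \eqref{derivative} does not rely on $\tau>0$ and the (immediate) connectedness of $\mathcal{C}_{\omega_0}$; everything else is the bookkeeping above.
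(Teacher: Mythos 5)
Your proposal is correct and follows essentially the same route as the paper: the paper's proof introduces the vector field $Z(\xi)=\sum_i e_i(f)\,e_i+f\,\mathcal{T}^{-1}(\xi)$ (your $\Phi$), shows $e_i(Z)=\sum_j\tau_{ij}[f]\,e_j=0$ via the Gauss--Weingarten formulas of Lemma \ref{lemma:Gauss-Weingarten}, recovers $f=G(\mathcal{T}^{-1}(\xi))(Z,\mathcal{T}^{-1}(\xi))$, and kills $a_{n+1}=\langle Z,E_{n+1}\rangle$ by the same boundary computation you describe. The only cosmetic difference is that you recycle the identity \eqref{derivative} from Proposition \ref{prop:xia-prpo2.6} rather than recomputing it, which is a legitimate shortcut since that computation indeed never uses $\tau>0$.
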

\begin{proof}
	Let $e_{n+1}(\xi)=\mathcal{T}^{-1}(\xi)$ be the anisotropic normal vector of $\xi\in\mathcal{C}_{\omega_0}$ and $\{e_i(\xi)\}_{i=1}^n$ a local orthonormal frame field with respect to $\mathring{g}$ on $\mathcal{C}_{\omega_0}$ such that $\{e_{\alpha}\}_{\alpha=1}^{n+1}$ is positive-oriented orthonormal frame field with respect  to $G(\mathcal{T}^{-1}(\xi))$ in $T_{\xi}\mathbb{R}^{n+1}$ ($\xi\in\mathcal{C}_{\omega_0}$). By Lemma \ref{lemma:Gauss-Weingarten}, we have (in normal coordinates at a point, i.e. $\mathring{g}_{ij}=\delta_{ij}$ and $\mathring{\nabla}_{e_i}e_j=0$)
	\begin{align}\label{equ:ei-en+1}
	e_i(e_{n+1})=e_i, \quad e_{i}(e_j)=-\frac{1}{2}\sum_{k=1}^nQ_{ijk}e_k-\delta_{ij}e_{n+1},
	\end{align}
	for $ i,j\in\{1,\cdots,n\}$. Denote by $f_i=e_i(f)$, and define a vector valued function \begin{align}\label{equ:Z=}
		Z(\xi)=\sum_{i=1}^{n}f_i(\xi)e_i(\xi)+f(\xi)e_{n+1}(\xi),\quad \xi\in\mathcal{C}_{\omega_0}.
	\end{align} Since $G(\mathcal{T}^{-1}(\xi))\(e_i,e_{n+1}\)=0$ and $G(\mathcal{T}^{-1}(\xi))\(e_{n+1},e_{n+1}\)=1$, we get
	\begin{align}\label{equ:f=}
	f=G(\mathcal{T}^{-1}(\xi))\(Z,e_{n+1}\).
	\end{align}
	From \eqref{equ:ei-en+1},  \eqref{equ:Z=} and $\tau[f]=0$, one derives
	\begin{align*}
		e_i(Z)=\sum_{j=1}^n\tau_{ij}[f]\cdot e_j=0.
	\end{align*}
	Then $Z(\xi)\in\mathbb{R}^{n+1}$ is a constant vector for $\xi\in\mathcal{C}_{\omega_0}$, and can be written as $Z=\sum_{\alpha=1}^{n+1}a_{\alpha}E_{\alpha}$ for some constants $a_{\alpha}$. Putting it into \eqref{equ:f=} yields \eqref{equ:f=1-n+1}.
	
	If in addition $f$ satisfies \eqref{robin}, we have
	\begin{align}
		\frac{\omega_0}{F(\nu)\<\mu,E_{n+1}\>} f
		=&
		\mathring{\nabla}_{\mu_F} f
		=\mathring{g}(\mathring{\nabla}f,\mu_{{F}})=G(\mathcal{T}^{-1}(\xi))\(\mathring{\nabla}f,\mu_{{F}}\)\nonumber
		\\
		=&\frac{\<\mathring{\nabla}{f},\mu\>}{F(\nu)}
		\quad \text{on}\quad \partial \mathcal{C}_{\omega_0},\nonumber
	\end{align}
	where the  last equality follows from \cite{Ding-Gao-Li-arxiv}*{eq. (3.29)}.
	Combining with $Z=\sum_{\alpha=1}^{n+1}a_{\alpha}E_{\alpha}$, \eqref{equ:Z=}, and $E_{n+1}=\<E_{n+1},\nu\>\nu+\<E_{n+1},\mu\>\mu$, 
	 we have
	\begin{align*}
		a_{n+1}=&\<Z,E_{n+1}\>=\<fe_{n+1},E_{n+1}\>+\<\mathring{\nabla}f,\mu\>\<\mu,E_{n+1}\>
	\\
	=&-\omega_0f+\omega_0f=0.
	\end{align*}
	Here we use the fact that $\widehat{\mathcal{C}}_{\omega_0}\in\mathcal{K}_{\omega_0}$, which means $-\omega_0=\<\nu_{{F}}(\xi),E_{n+1}\>=\<\mathcal{T}^{-1}(\xi),E_{n+1}\>$.
	This completes the proof.	
\end{proof}

The following lemma will be crucially used in the subsequent analysis.

\begin{lemma}[ \cite{Shenfeld19}*{Lemma 1.4}]\label{lemma:3.1}
	 Let $\mathcal{A}$ be a symmetric matrix. Then the following conditions are equivalent:
\begin{itemize}
	\item [(i)] $\langle x, \mathcal{A} y\rangle^2 \geq\langle x, \mathcal{A} x\rangle\langle y, \mathcal{A} y\rangle$ for all $x, y$ such that $\langle y, \mathcal{A} y\rangle \geq 0$;
\item[(ii)] The positive eigenspace of $\mathcal{A}$ has dimension at most one.
\end{itemize}
The conclusion remains valid if $\mathcal{A}$ is a self-adjoint operator on a Hilbert space with a discrete spectrum, provided the vectors $x, y$ are chosen in the domain of $\mathcal{A}$.
\end{lemma}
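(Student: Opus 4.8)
The plan is to recast both implications as a statement about the restriction of the quadratic form $q(v):=\langle v,\mathcal{A}v\rangle$ to two-dimensional subspaces, and then to invoke the Courant--Fischer min--max principle. The elementary ingredient is that, for any two vectors $x,y$, the Gram matrix of $q$ relative to the ordered pair $(x,y)$ is
\[
M(x,y)=\begin{pmatrix}\langle x,\mathcal{A}x\rangle & \langle x,\mathcal{A}y\rangle\\ \langle x,\mathcal{A}y\rangle & \langle y,\mathcal{A}y\rangle\end{pmatrix},\qquad \det M(x,y)=\langle x,\mathcal{A}x\rangle\langle y,\mathcal{A}y\rangle-\langle x,\mathcal{A}y\rangle^{2},
\]
so that condition (i) says exactly that $\det M(x,y)\le0$ whenever $\langle y,\mathcal{A}y\rangle\ge0$.

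I would first prove the implication (i) $\Rightarrow$ (ii) by contraposition. If the positive eigenspace of $\mathcal{A}$ has dimension at least two, choose two orthonormal eigenvectors $e_1,e_2$ with eigenvalues $\lambda_1,\lambda_2>0$ and set $x=e_1$, $y=e_2$. Then $\langle y,\mathcal{A}y\rangle=\lambda_2\ge0$, while $\langle x,\mathcal{A}y\rangle^{2}=0<\lambda_1\lambda_2=\langle x,\mathcal{A}x\rangle\langle y,\mathcal{A}y\rangle$, so (i) fails.

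For (ii) $\Rightarrow$ (i) I would argue again by contraposition: assume (i) fails, so there are $x,y$ with $\langle y,\mathcal{A}y\rangle\ge0$ and $\det M(x,y)<0$. Then necessarily $\langle y,\mathcal{A}y\rangle>0$ (otherwise $\det M(x,y)=-\langle x,\mathcal{A}y\rangle^{2}\le0$), hence also $\langle x,\mathcal{A}x\rangle>0$, and $x,y$ are linearly independent. Since $M(x,y)$ has a positive diagonal entry and positive determinant, it is positive definite, i.e.\ $q$ is positive definite on the plane $V=\operatorname{span}\{x,y\}$. By the min--max characterisation of the eigenvalues of the symmetric operator $\mathcal{A}$, the existence of a two-dimensional subspace on which $q$ is positive definite forces $\mathcal{A}$ to possess at least two positive eigenvalues, contradicting (ii).

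Finally, the Hilbert-space version runs along the same lines once $\mathcal{A}$ is self-adjoint with discrete spectrum: the spectral theorem supplies an orthonormal basis of eigenvectors, all lying in $D(\mathcal{A})$, so the vectors used in the proof of (i) $\Rightarrow$ (ii) are admissible; for (ii) $\Rightarrow$ (i) one may apply min--max to the spectral subspace associated with $(0,\infty)$, or else (disposing first of the trivial case $\langle y,\mathcal{A}y\rangle=0$) write $\mathcal{A}=\lambda_1P+\mathcal{A}(I-P)$ with $P$ the spectral projection onto the positive part of the spectrum (of rank at most one) and $\mathcal{A}(I-P)\le0$, decompose $x=\frac{\langle x,\mathcal{A}y\rangle}{\langle y,\mathcal{A}y\rangle}\,y+z$ with $\langle z,\mathcal{A}y\rangle=0$, and deduce $\langle z,\mathcal{A}z\rangle\le0$ from the Cauchy--Schwarz inequality for the positive semidefinite form $-\mathcal{A}(I-P)$ together with $\langle y,\mathcal{A}y\rangle>0$, whence $\langle x,\mathcal{A}x\rangle=\langle x,\mathcal{A}y\rangle^{2}/\langle y,\mathcal{A}y\rangle+\langle z,\mathcal{A}z\rangle\le\langle x,\mathcal{A}y\rangle^{2}/\langle y,\mathcal{A}y\rangle$. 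I expect the only delicate point — the main, though mild, obstacle — to be this last passage from ``$q$ positive definite on a $2$-plane'' to ``$\mathcal{A}$ has two positive eigenvalues'' in the unbounded setting: as $V$ is finite-dimensional, $\min_{v\in V,\,\|v\|=1}\langle v,\mathcal{A}v\rangle$ is attained and positive, and discreteness of the spectrum then produces two eigenvalues in $(0,\infty)$.
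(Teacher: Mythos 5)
The paper does not prove this lemma at all: it is imported verbatim as a citation of Shenfeld--van Handel \cite{Shenfeld19}*{Lemma 1.4}, so there is no in-paper argument to compare against. Your proposal supplies a correct, self-contained proof, and it follows essentially the standard route for such ``hyperbolicity'' statements: (i) $\Rightarrow$ (ii) by exhibiting two orthogonal eigenvectors with positive eigenvalues as a counterexample pair, and (ii) $\Rightarrow$ (i) by observing that failure of the inequality makes the quadratic form $\langle \cdot,\mathcal{A}\cdot\rangle$ positive definite on a two-dimensional subspace, which by Courant--Fischer forces two positive eigenvalues. Your alternative direct argument for the Hilbert-space case --- splitting $\mathcal{A}=\lambda_1 P+\mathcal{A}(I-P)$ with $\operatorname{rank}P\le 1$, decomposing $x$ along $y$ and its $\mathcal{A}$-orthogonal complement, and using Cauchy--Schwarz for the nonnegative form $-\mathcal{A}(I-P)$ --- is sound and has the advantage of sidestepping any min--max subtleties for unbounded operators; it correctly handles the degenerate case $\langle y,\mathcal{A}y\rangle=0$ separately.

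One small slip to fix: in the contrapositive of (ii) $\Rightarrow$ (i) you write that failure of (i) yields $x,y$ with $\langle y,\mathcal{A}y\rangle\ge 0$ and $\det M(x,y)<0$; since you have just identified (i) with ``$\det M(x,y)\le 0$'', its failure gives $\det M(x,y)>0$. The remainder of that paragraph (``positive diagonal entry and positive determinant, hence positive definite'') is consistent with the corrected sign, so this is a typo rather than a gap, but it should be corrected for the argument to read coherently.
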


We now establish the following result:
 \begin{theorem}\label{thm:V2>VV}
 	Let $f_1, \cdots, f_n \in C^2\left(\mathcal{C}_{\omega_0}\right)$ be anisotropic capillary convex functions on $\mathcal{C}_{\omega_0}$ and at least one of $f_k\,(2 \leq k \leq n)$ is positive. Then for any anisotropic capillary function $f \in C^2\left(\mathcal{C}_{\omega_0}\right)$, there holds
\begin{align}\label{equ:V2>VV}
	V^2\left(f, f_1, f_2, \cdots, f_n\right) \geq V\left(f, f, f_2, \cdots, f_n\right) V\left(f_1, f_1, f_2, \cdots, f_n\right) .
\end{align}
Equality holds if and only if $f(\xi)=a f_1(\xi)+\sum_{i=1}^n a_iG(\mathcal{T}^{-1}(\xi))\(\mathcal{T}^{-1}(\xi), E_i\)$ for some constants $a, a_i \in \mathbb{R}$. 
 \end{theorem}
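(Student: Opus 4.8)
The plan is to realise $V(\,\cdot\,,\,\cdot\,,f_2,\dots,f_n)$ as the quadratic form of a self-adjoint elliptic operator on $\mathcal{C}_{\omega_0}$ and then to apply Lemma~\ref{lemma:3.1}; after a routine approximation we may assume all data smooth. Fixing $f_2,\dots,f_n$ and using the symmetry of $V$ together with the $1$-homogeneity of $\mathcal{Q}$ (cf.\ \eqref{equ:pf-V12=V21-1}), one has, as in the proof of Lemma~\ref{lemma:V12=V21},
\[
(n+1)\,V(\phi,\psi,f_2,\dots,f_n)=\int_{\mathcal{C}_{\omega_0}}\phi\,(\mathcal{L}\psi)\,F(\nu)\,\mathrm{d}\mu_g=:\langle\phi,\mathcal{A}\psi\rangle,
\]
where $\mathcal{L}\psi:=\mathcal{Q}^{ij}\tau_{ij}[\psi]=\mathcal{Q}^{ij}\bigl(\mathring{\nabla}_i\mathring{\nabla}_j\psi+\mathring{g}_{ij}\psi-\tfrac12 Q_{ijk}\mathring{\nabla}_k\psi\bigr)$ and $\mathcal{Q}^{ij}$ is the derivative of $\mathcal{Q}$ in its first slot evaluated at $(\tau[f_2],\dots,\tau[f_n])$. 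Since $f_2,\dots,f_n$ are anisotropic capillary convex, the matrices $\tau[f_k]$ are positive definite, hence $(\mathcal{Q}^{ij})$ is positive definite (the positivity discussion following \eqref{equ:pQ1...n}) and $\mathcal{L}$ is uniformly elliptic. Running the integration by parts of the proof of Lemma~\ref{lemma:V12=V21} verbatim — which uses the divergence theorem, \eqref{equ:pf-V12=V21-2}, \eqref{equ:pf-V12=V21-3}, and, decisively, Lemma~\ref{lemma:0=Qjijfi-Q+Q} — shows that this bilinear form is symmetric, so that $\mathcal{L}$ with the boundary condition \eqref{robin} defines a self-adjoint operator $\mathcal{A}$ on $\mathcal{H}:=L^2(\mathcal{C}_{\omega_0},F(\nu)\,\mathrm{d}\mu_g)$; by elliptic theory on the compact manifold with boundary $\mathcal{C}_{\omega_0}$, $\mathcal{A}$ has compact resolvent, hence discrete spectrum and smooth eigenfunctions. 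Here Lemma~\ref{lemma:0=Qjijfi-Q+Q} is the crucial new ingredient: the divergence-free identity $\mathring{\nabla}_j\mathcal{Q}^{ij}=0$ available in the isotropic setting fails because the anisotropic Weingarten operator \eqref{equ:S_F} is not symmetric, and \eqref{equ:0=Qjijfi-Q+Q} is exactly the correction that restores self-adjointness.

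With $\mathcal{A}$ available, \eqref{equ:V2>VV} is equivalent, by Lemma~\ref{lemma:3.1} applied with $x=f$ and $y=f_1$, to the single assertion that the positive eigenspace of $\mathcal{A}$ is at most one-dimensional. The admissibility hypothesis of Lemma~\ref{lemma:3.1} for $y=f_1$ holds because $\langle f_1,\mathcal{A}f_1\rangle=(n+1)V(f_1,f_1,f_2,\dots,f_n)>0$: using the symmetry of $V$ to move the positive function among $f_2,\dots,f_n$ into the leading slot, the integrand becomes a positive function times $\mathcal{Q}$ of positive definite matrices. The identical computation with that positive $f_{k_0}$ placed in the second slot as well gives $\langle f_{k_0},\mathcal{A}f_{k_0}\rangle>0$, so the positive eigenspace is at least one-dimensional. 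Thus it remains only to prove that $\mathcal{A}$ has exactly one positive eigenvalue.

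This is the heart of the matter and the main obstacle. I would deform $(f_2,\dots,f_n)$ along $f_k^{\,t}:=(1-t)\hat{s}_o+t f_k$, $t\in[0,1]$, which stays in the convex cone of anisotropic capillary convex functions, and exploit that the $n$-dimensional space
\[
\mathcal{N}:=\mathrm{span}\Bigl\{\,\xi\mapsto G(\mathcal{T}^{-1}(\xi))\bigl(\mathcal{T}^{-1}(\xi),E_i\bigr)\ :\ 1\le i\le n\,\Bigr\}
\]
lies in $\ker\mathcal{A}_t$ for every $t$: each of its elements has vanishing $\tau$-matrix (Remark~\ref{rk:3}) and satisfies \eqref{robin}, being the anisotropic capillary support function of the translate of $\widehat{\mathcal{C}}_{\omega_0}$ by $E_i\perp E_{n+1}$, which lies in $\mathcal{K}_{\omega_0}$. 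At $t=0$ one has $\tau[f_k^{\,0}]=I_n$ by \eqref{equ:A[l]}, so $\mathcal{A}_0$ is, up to a positive constant, the operator $\psi\mapsto\mathring{g}^{ij}\tau_{ij}[\psi]$ — a $\varphi$-weighted Laplacian plus the constant potential $n$; by Lemma~\ref{lemma:ker-A} its kernel is exactly $\mathcal{N}$, and a capillary Reilly/Bochner-type integral estimate on $(\mathcal{C}_{\omega_0},\mathring{g})$ (the anisotropic analogue of the $\theta$-capillary eigenvalue estimates of \cite{Wang-Weng-Xia,Xia-arxiv}) shows $\mathcal{A}_0$ has precisely one positive eigenvalue. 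Finally one checks that the number of positive eigenvalues stays constant along the segment: the eigenvalues depend continuously on $t$, and since $\dim\ker\mathcal{A}_t\equiv n$ — the inclusion $\mathcal{N}\subset\ker\mathcal{A}_t$ being matched by an a priori bound $\dim\ker\mathcal{A}_t\le n$ — no eigenvalue crosses $0$, so the count stays at its value $1$ at $t=0$. Establishing the model spectral estimate and this a priori kernel bound is where the real work lies; a homotopy-free alternative is to prove the one-positive-eigenvalue property directly by an $L^2$-type Bochner identity on $(\mathcal{C}_{\omega_0},\mathring{g})$ in the spirit of \cite{Shenfeld19,Shenfeld22}, as indicated at the beginning of Section~\ref{sec 4}.

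For the equality case, set $h:=f-a f_1$ with $a:=\langle f,\mathcal{A}f_1\rangle/\langle f_1,\mathcal{A}f_1\rangle$, well defined and real since $\langle f_1,\mathcal{A}f_1\rangle>0$; then $\langle h,\mathcal{A}f_1\rangle=0$, and a short computation shows that equality in \eqref{equ:V2>VV} is exactly $\langle h,\mathcal{A}h\rangle=0$. Decomposing $h=c\,w+h^{0}+h^{-}$ along the positive eigenline $\mathbb{R}w$, the kernel $\mathcal{N}$, and the negative part of $\mathcal{A}$, the relation $\langle h,\mathcal{A}h\rangle=0$ reads $\mu c^{2}=\|h^{-}\|_{-}^{2}$ while $\langle h,\mathcal{A}f_1\rangle=0$ reads $\mu c\,c_1=\langle h^{-},f_1^{-}\rangle_{-}$, where $\mu>0$ is the principal eigenvalue, $c_1\neq 0$ is the $w$-component of $f_1$, and $\|\,\cdot\,\|_{-}^{2}=-\langle\,\cdot\,,\mathcal{A}\,\cdot\,\rangle$ on the negative part; the Cauchy--Schwarz inequality together with $\langle f_1,\mathcal{A}f_1\rangle>0$ then forces $c=0$, hence $h^{-}=0$, that is $h\in\ker\mathcal{A}=\mathcal{N}$. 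This gives $f=a f_1+\sum_{i=1}^{n}a_i\,G(\mathcal{T}^{-1}(\xi))\bigl(\mathcal{T}^{-1}(\xi),E_i\bigr)$ for some $a_i\in\mathbb{R}$; conversely, multilinearity of $V$ together with $\mathcal{A}g=0$ for $g\in\mathcal{N}$ shows that any such $f$ achieves equality.
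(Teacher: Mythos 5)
Your overall architecture agrees with the paper's: realize $V(\cdot,\cdot,f_2,\dots,f_n)$ as the quadratic form of a self-adjoint elliptic operator $\mathcal{A}$ with Robin boundary condition \eqref{robin} (self-adjointness resting on Lemma \ref{lemma:V12=V21}, hence on Lemma \ref{lemma:0=Qjijfi-Q+Q}), and then invoke Lemma \ref{lemma:3.1} to reduce \eqref{equ:V2>VV} to the statement that the positive eigenspace of $\mathcal{A}$ is one-dimensional. However, at precisely that decisive step your argument has a genuine gap. You propose a homotopy $f_k^{\,t}=(1-t)\hat{s}_o+tf_k$ and rest the conclusion on two unproven inputs: (i) a ``capillary Reilly/Bochner-type integral estimate'' showing the model operator $\mathcal{A}_0$ has exactly one positive eigenvalue, and (ii) an a priori bound $\dim\ker\mathcal{A}_t\le n$ along the whole deformation (without which an eigenvalue could cross $0$ and the count of positive eigenvalues could change). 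You explicitly flag both as ``where the real work lies,'' so the central assertion is not established. The same unproved input reappears in your equality analysis, where you assert $\ker\mathcal{A}=\mathcal{N}$; Lemma \ref{lemma:ker-A} only identifies functions with $\tau[f]=0$, which is a priori a smaller condition than $\mathcal{A}f=\mathcal{Q}^{ij}\tau_{ij}[f]=0$.

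The paper closes this gap with a short, self-contained argument that avoids any deformation. First, normalizing so that $f_2>0$ and working in $L^2(\mathcal{C}_{\omega_0},\omega)$ with $d\omega$ built from $\mathcal{Q}(A[f_2],A[f_2],A[f_3],\dots,A[f_n])/f_2$, one observes that $f_2$ itself is an eigenfunction of $\mathcal{A}$ with eigenvalue $1$; since $f_2>0$, Krein--Rutman (the first eigenvalue is simple and its eigenfunction is signed) forces $\lambda_1=1$ to be the simple principal eigenvalue. Second, Alexandrov's mixed discriminant inequality \eqref{equ:Q2>QQ} gives $\langle\mathcal{A}g,\mathcal{A}g\rangle_{L^2(\omega)}\ge\langle g,\mathcal{A}g\rangle_{L^2(\omega)}$ for all $g$ in the domain, whence every eigenvalue satisfies $\lambda^2\ge\lambda$, i.e.\ $\lambda\ge1$ or $\lambda\le0$. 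Together these yield that the positive spectrum consists of the single simple eigenvalue $1$. For the equality case, rather than assuming $\ker\mathcal{A}=\mathcal{N}$, the paper deduces from $\tilde f=f-af_1\in\ker\mathcal{A}$ that equality holds in \eqref{equ:AgAg>gAg}, hence in Alexandrov's inequality, whose rigidity gives $A[\tilde f]=cA[f_2]$; then $\mathcal{A}\tilde f=0$ forces $c=0$, so $\tau[\tilde f]=0$ and only then does Lemma \ref{lemma:ker-A} apply. To repair your proof you would either have to supply the model spectral estimate and the kernel bound for the homotopy, or (more efficiently) substitute the Krein--Rutman plus mixed-discriminant argument above.
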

\begin{proof} Without loss of generality, we assume that $f_2$ is positive on $\mathcal{C}_{\omega_0}$. Define $\omega$ to be the following area measure on $\mathcal{C}_{\omega_0}$
$$
d \omega:=\frac{1}{n+1} \frac{Q\left(A\left[f_2\right], A\left[f_2\right], A\left[f_3\right],\cdots, A\left[f_n\right]\right)}{f_2} F(\nu)	\mathrm{d} {\mu_{g}} .
$$
We assume that  $L^2\left(\mathcal{C}_{\omega_0}, \omega\right)$ is the Hilbert space associated with the inner product $\langle,\rangle_{L^2(\omega)}$ given by
$$
\langle f, g\rangle_{L^2(\omega)}:=\int_{\mathcal{C}_{\omega_0}} f \cdot g d \omega, \quad \forall f, g \in L^2\left(\mathcal{C}_{\omega_0}, \omega\right) .
$$
Similarly we can define $W^{2,2}\left(\mathcal{C}_{\omega_0}, \omega\right)$ in terms of the inner product  $\langle,\rangle_{L^2(\omega)}$. Define the following operator similar to \cite{Xia-arxiv,Shenfeld19}:
$$
\begin{aligned}
\mathcal{A}:  \operatorname{dom}	&(\mathcal{A}) \subset L^2\left(\mathcal{C}_{\omega_0}, \omega\right) \rightarrow L^2\left(\mathcal{C}_{\omega_0}, \omega\right), \\
	 \mathcal{A}(f)=:&\frac{f_2 \mathcal{Q}\left(A[f], A\left[f_2\right], \cdots, A\left[f_n\right]\right)}{\mathcal{Q}\left(A\left[f_2\right], A\left[f_2\right], \cdots, A\left[f_n\right]\right)}
	\\
	=&\frac{f_2\mathcal{Q}^{ij}\tau_{ij}[f]}{\mathcal{Q}(A[f_2],A[f_2],\cdots,A[f_n])}
	\\
	=:&a^{ij}(\xi)\mathring{\nabla}_{i}\mathring{\nabla}_{j}f+b^i(\xi)\mathring{\nabla}_{i}f+c(\xi)f,\quad \xi\in \mathcal{C}_{\omega_0},
\end{aligned}
$$
where $a^{ij}=\frac{f_2\mathcal{Q}^{ij}}{\mathcal{Q}(A[f_2],A[f_2],\cdots,A[f_n])}$ is independent of $f$, and $\operatorname{dom}(\mathcal{A})$ is given by
$$
\operatorname{dom}(\mathcal{A}):=\left\{f \in W^{2,2}\left(\mathcal{C}_{\omega_0}, \omega\right): f\, \text{satisfies boundary condition \eqref{robin}}\right\} .
$$
Since $f_2>0$ and $f_2, \cdots, f_n$ are anisotropic capillary convex functions on $\mathcal{C}_{\omega_0}$, all the above objects are well-defined.

By the very definition of $\mathcal{A}$, \eqref{equ:V2>VV} is equivalent to
\begin{align}\label{equ:AAf2>fAffAf}
	\left(\left\langle f, \mathcal{A} f_1\right\rangle_{L^2(\omega)}\right)^2 \geq\langle f, \mathcal{A} f\rangle_{L^2(\omega)}\left\langle f_1, \mathcal{A} f_1\right\rangle_{L^2(\omega)},
\end{align}
for functions $f,\left\{f_i\right\}_{i=1}^n$ that satisfy the assumptions. Lemma \ref{lemma:3.1} reduces the proof to show that the positive eigenspace of $\mathcal{A}$ is at most one-dimensional.

Since $f_1, \cdots, f_n$ are anisotropic capillary convex functions and $f_2>0$, we know $[\mathcal{Q}^{ij}]>0$, and $[a^{ij}]>0$, then $\mathcal{A}$ is a uniformly elliptic operator with a Robin boundary condition. Lemma  \ref{lemma:V12=V21} implies
$$
\langle f, \mathcal{A} g\rangle_{L^2(\omega)}=\langle g, \mathcal{A} f\rangle_{L^2(\omega)}, \quad \forall f, g \in \operatorname{dom}(\mathcal{A}),
$$
which means that $\mathcal{A}$ is a self-adjoint operator on $\operatorname{dom}(\mathcal{A})$. Consider the following eigenvalue problem
\begin{eqnarray}\label{equ:eigenvalue-problem}
	\left\{
\begin{aligned}
	\mathcal{A} f & =\lambda f, & & \text { in } \mathcal{C}_{\omega_0}, \\
	\mathring{\nabla}_{\mu_F} f&=\frac{\omega_0}{F(\nu)\<\mu,E_{n+1}\>} f.\quad  & & \text { on } \partial \mathcal{C}_{\omega_0} .
\end{aligned}\right.
\end{eqnarray}
According to the classical spectral theory for compact, self-adjoint operators, the eigenvalue problem \eqref{equ:eigenvalue-problem} admits a sequence of real eigenvalues $\lambda_1 \geq \lambda_2 \geq \cdots$, where the first eigenvalue $\lambda_1$ is simple and its first eigenfunction has the same sign everywhere (see \cite{Chang-Wang-Wu}*{Theorem 5.10}, or see Krein-Rutman Theorem in \cite{Deimling}*{Theorem 19.3}, \cite{Du2006}*{Theorem 1.2}).
It is trivial to check that $f_2$ satisfies \eqref{equ:eigenvalue-problem} with $\lambda=1$. Since $f_2>0, f_2$ is the first eigenfunction with the first eigenvalue $\lambda_1=1$.

On the other hand, since $A[f_2],\cdots,A[f_n]$ are positive definite, by using Alexandrov's mixed discriminant inequality (see e.g. \cite{book-convex-body}*{Theorem 5.5.4}), we have
\begin{align}
	&\left(\mathcal{Q}\left(A[g], A\left[f_2\right], \cdots, A\left[f_n\right]\right)\right)^2\nonumber
	\\
	 \geq &\mathcal{Q}\left(A\left[f_2\right], A\left[f_2\right], \cdots, A\left[f_n\right]\right) \cdot \mathcal{Q}\left(A[g], A[g], A\left[f_2\right], \cdots, A\left[f_n\right]\right),\label{equ:Q2>QQ}
\end{align}
together with Lemma \ref{lemma:V12=V21}, for any function $g \in \operatorname{dom}(\mathcal{A})$ we have
	\begin{align}
	\langle\mathcal{A} g, \mathcal{A} g\rangle_{L^2(\omega)} & =\frac{1}{n+1} \int_{\mathcal{C}_{\omega_0}} \frac{f_2\left(\mathcal{Q}\left(A[g], A\left[f_2\right], \cdots, A\left[f_n\right]\right)\right)^2}{\mathcal{Q}\left(A\left[f_2\right], A\left[f_2\right], \cdots, A\left[f_n\right]\right)} F(\nu)	\mathrm{d} {\mu_{g}} \nonumber
	\\
	& \geq \frac{1}{n+1} \int_{\mathcal{C}_{\omega_0}} f_2 \mathcal{Q}\left(A[g], A[g], A\left[f_2\right], \cdots, A\left[f_n\right]\right) F(\nu)	\mathrm{d} {\mu_{g}} \nonumber
	\\
	& =\frac{1}{n+1} \int_{\mathcal{C}_{\omega_0}} g \mathcal{Q}\left(A[g], A\left[f_2\right], \cdots, A\left[f_n\right]\right) F(\nu)	\mathrm{d} {\mu_{g}} \nonumber
	\\
	& =\langle g, \mathcal{A} g\rangle_{L^2(\omega)} .\label{equ:AgAg>gAg}
\end{align}
Therefore, if $\lambda$ is an eigenvalue of the operator $\mathcal{A}$, the above inequality implies that $\lambda^2 \geq \lambda$, which means $\lambda \geq 1$ or $\lambda \leq 0$. Consequently, the positive eigenspace of $\mathcal{A}$ is of one dimension and is spanned by $f_2$. In view of Lemma  \ref{lemma:3.1}, we complete the proof of \eqref{equ:AAf2>fAffAf}.

Next, we characterize the equality case. If equality holds in \eqref{equ:V2>VV}, then also equality holds in \eqref{equ:AAf2>fAffAf}, that is
\begin{align}	
\left(\left\langle f, \mathcal{A} f_1\right\rangle_{L^2(\omega)}\right)^2=\langle f, \mathcal{A} f\rangle_{L^2(\omega)}\left\langle f_1, \mathcal{A} f_1\right\rangle_{L^2(\omega)} .\label{equ:fAf2=}
\end{align}

From \cite{Shenfeld19}*{Lemma 2.9 and its proof}, equality holds in \eqref{equ:fAf2=} if and only if $\widetilde{f}:=f-a f_1 \in$ $\operatorname{Ker}(\mathcal{A})$ for some constant $a \in \mathbb{R}$.

For $g=\tilde{f}$, equality holds in \eqref{equ:AgAg>gAg} and in turn in Alexandrov's mixed discriminant inequality \eqref{equ:Q2>QQ}. It follows from \cite{book-convex-body}*{Theorem 5.5.4} that $A[\tilde{f}]=c A\left[f_2\right]$ for some function $c: \mathcal{C}_{\omega_0} \rightarrow \mathbb{R}$. Since $\tilde{f} \in \operatorname{Ker}(\mathcal{A})$, we have
$$
0=\frac{f_2 \mathcal{Q}\left(A\left[\tilde{f}-c f_2\right], A\left[f_2\right], \cdots, A\left[f_n\right]\right)}{\mathcal{Q}\left(A\left[f_2\right], A\left[f_2\right], \cdots, A\left[f_n\right]\right)}=\mathcal{A} \tilde{f}-cf_2=-cf_2,
$$
which implies $c=0$, since $f_2>0$. Then $A[\tilde{f}]=0$,  combining with the fact that $\tilde{f}$ is an anisotropic capillary function, by Lemma \ref{lemma:ker-A}, we obtain   $$\tilde{f}=\sum_{i=1}^n a_iG(\mathcal{T}^{-1}(\xi))\(\mathcal{T}^{-1}(\xi), E_i\)$$ for some constants $a_i \in \mathbb{R}, i=1, \cdots, n$. Therefore, equality in \eqref{equ:V2>VV} holds if and only if $f=a f_1+\sum_{i=1}^n a_iG(\mathcal{T}^{-1}(\xi))\(\mathcal{T}^{-1}(\xi), E_i\)$. Combining with \eqref{equ:G(vF,Y)=<v,Y>/F}, we complete the proof.
\end{proof}
 \begin{proof}[\textbf{Proof of Theoren \ref{thm:A-F-convex}}]
 	Let $f$ and $f_i(i=1, \cdots, n)$ be the anisotropic capillary support functions $\hat{s}_{k}\ (1\leq k\leq n+1)$ of $\widehat{\Sigma}_1$ and $\widehat{\Sigma}_{i+1}(i=1, \cdots, n)$, in Theorem \ref{thm:A-F-convex}.  Obviously, $f$ and $f_i$ are anisotropic capillary convex functions. By a horizontal translation, we may assume $f_i>0$. Applying Theorem \ref{thm:V2>VV} and taking into account of Proposition \ref{prop7.3}, we obtain the desired inequality. Equality holds if and only if
 	\begin{align}\label{equ:f=af+...}
 		f=a f_1+\sum_{i=1}^n a_iG(\mathcal{T}^{-1}(\xi))\(\mathcal{T}^{-1}(\xi), E_i\).
 	\end{align}
 	It follows from $A[f]>0$, \eqref{equ:A[X,v]} and Remark \ref{rk:3} that the constant $a>0$.  For any convex anisotropic $\omega_0$-capillary hypersurface, we have  $\nu_{{F}}(X(\xi))=\mathcal{T}^{-1}(\xi)$ for $\xi\in\mathcal{C}_{\omega_0}$ by \eqref{equ:X(xi)=t-1VF}, which yields \eqref{equ:s=as+...} together  with \eqref{equ:G(vF,Y)=<v,Y>/F} and \eqref{equ:f=af+...}.
 \end{proof}

An application of Theorem \ref{thm:A-F-convex}, combined with the methodology developed in \cite{book-convex-body}*{Section 7.4 and equation (7.63)}, yields the following inequality for anisotropic capillary convex bodies.
 \begin{corollary}\label{cor:4.3}
 	Let $m \in\{1, \cdots, n+1\}$ and assume $K_0, K_1, \cdots, K_{m+1}, \cdots, K_{n+1} \in \mathcal{K}_{\omega_0}$, and $\hat{s}_j (0\le j\le n+1)$ are the capillary support function of $K_j$. Let
$$
V_{(i), {\omega_0}}:=V(\underbrace{\hat{s}_0, \cdots, \hat{s}_0}_{(m-i) \text { copies }} \underbrace{\hat{s}_1, \cdots, \hat{s}_1}_{i \text { copies }}, \hat{s}_{m+1}, \cdots, \hat{s}_{n+1}), \quad \text { for } \quad i=0, \cdots, m .
$$
Then $V_{(i), {\omega_0}}$ satisfies
$$
V_{(j), {\omega_0}}^{k-i} \geq V_{(i), {\omega_0}}^{k-j} V_{(k), {\omega_0}}^{j-i}, \quad 0 \leq i<j<k \leq m \leq n+1 .
$$
Equality holds if and only if $\hat{s}_0=a \hat{s}_1+\sum_{i=1}^n a_i\frac{\<\nu, E_i\>}{F(\nu)}$ for some constants $a>0$ and $a_i, 1 \leq i \leq n$.
 \end{corollary}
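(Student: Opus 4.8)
The plan is to deduce Corollary~\ref{cor:4.3} from the two-term Alexandrov--Fenchel inequality of Theorem~\ref{thm:A-F-convex} --- transported to mixed volumes by Proposition~\ref{prop7.3} --- by carrying out the classical passage from the Alexandrov--Fenchel inequality to its higher log-concavity consequences, exactly as in \cite{book-convex-body}*{Section~7.4}. The first point I would record is that every $V_{(i),{\omega_0}}$ is strictly positive: since each $K_j\in\mathcal{K}_{\omega_0}$ has anisotropic capillary convex support function, $A[\hat s_j]=B\,\tau[\hat s_j]\,B^{T}>0$ on $\mathcal{C}_{\omega_0}$, and the mixed discriminant of positive definite matrices is positive, so the integrand in Definition~\ref{def:V(f1,.,fn)} is positive. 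This is what makes the powers (and, implicitly, the logarithms) appearing in the statement meaningful.

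Next, for each $l$ with $1\le l\le m-1$ I would apply Theorem~\ref{thm:A-F-convex} to the $n+1$ bodies $\widehat\Sigma_1=K_0$, $\widehat\Sigma_2=K_1$, and $\widehat\Sigma_3,\dots,\widehat\Sigma_{n+1}$ taken to be $m-l-1$ copies of $K_0$, $l-1$ copies of $K_1$, and $K_{m+1},\dots,K_{n+1}$. By Proposition~\ref{prop7.3} the three mixed volumes occurring in Theorem~\ref{thm:A-F-convex} are then exactly $V_{(l),{\omega_0}}$, $V_{(l-1),{\omega_0}}$ and $V_{(l+1),{\omega_0}}$, so that
\[
V_{(l),{\omega_0}}^{2}\ \ge\ V_{(l-1),{\omega_0}}\,V_{(l+1),{\omega_0}},\qquad 1\le l\le m-1 .
\]
Combined with the positivity above, this says that the finite sequence $(\log V_{(l),{\omega_0}})_{l=0}^{m}$ is concave, hence $\log V_{(j),{\omega_0}}\ge\frac{k-j}{k-i}\log V_{(i),{\omega_0}}+\frac{j-i}{k-i}\log V_{(k),{\omega_0}}$ for $0\le i<j<k\le m$, which is precisely $V_{(j),{\omega_0}}^{\,k-i}\ge V_{(i),{\omega_0}}^{\,k-j}V_{(k),{\omega_0}}^{\,j-i}$; this last, purely elementary, step is the computation of \cite{book-convex-body}*{Section~7.4, eq.~(7.63)}.

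For the equality case I would argue as follows. If $V_{(j),{\omega_0}}^{\,k-i}=V_{(i),{\omega_0}}^{\,k-j}V_{(k),{\omega_0}}^{\,j-i}$, then concavity of $(\log V_{(l),{\omega_0}})$ forces equality in every intermediate step, and in particular $V_{(l),{\omega_0}}^{2}=V_{(l-1),{\omega_0}}V_{(l+1),{\omega_0}}$ for at least one $l\in\{i+1,\dots,k-1\}$, a set that is nonempty since $k-i\ge2$. The equality characterization in Theorem~\ref{thm:A-F-convex}, applied to that $l$ with $\widehat\Sigma_1=K_0$, $\widehat\Sigma_2=K_1$, then yields $\hat s_0=a\hat s_1+\sum_{i=1}^{n}a_i\frac{\langle\nu,E_i\rangle}{F(\nu)}$ with $a>0$ and $a_i\in\mathbb{R}$. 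Conversely, if this relation holds with $a>0$, then by \eqref{equ:A[X,v]} and Remark~\ref{rk:3} the summand $\sum_i a_i\frac{\langle\nu,E_i\rangle}{F(\nu)}$ has vanishing $\tau[\cdot]$, hence $A[\hat s_0]=aA[\hat s_1]$, and --- using the symmetry of $V$ (Lemma~\ref{lemma:V12=V21}) --- that summand drops out of every mixed volume in which it occurs, so $V_{(l),{\omega_0}}=a^{m-l}V_{(m),{\omega_0}}$ for every $l$; the identity $(m-j)(k-i)=(m-i)(k-j)+(m-k)(j-i)$ then shows the inequality above becomes an equality.

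I do not expect any genuinely new difficulty beyond Theorem~\ref{thm:A-F-convex} itself: the arguments of the first three paragraphs are the standard convex-geometry machinery. The only step requiring some care is the equality analysis of the last paragraph --- keeping precise track of which of the many intermediate Alexandrov--Fenchel inequalities are forced to be equalities, and verifying that the single resulting relation $\hat s_0=a\hat s_1+\sum_i a_i\langle\nu,E_i\rangle/F(\nu)$ is also sufficient for equality.
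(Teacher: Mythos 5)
Your proposal is correct and is essentially the paper's intended proof: the paper gives no written argument for Corollary \ref{cor:4.3} beyond invoking Theorem \ref{thm:A-F-convex} together with the classical log-concavity mechanism of \cite{book-convex-body}*{Section 7.4, eq.~(7.63)}, and that is exactly what you carry out --- the pairwise inequalities $V_{(l),\omega_0}^{2}\ge V_{(l-1),\omega_0}V_{(l+1),\omega_0}$ obtained from Theorem \ref{thm:A-F-convex} and Proposition \ref{prop7.3}, concavity of $\log V_{(l),\omega_0}$, and the equality analysis through the rigidity case of Theorem \ref{thm:A-F-convex} and Remark \ref{rk:3}. The only detail worth tightening is the positivity of $V_{(i),\omega_0}$: your justification covers the mixed-discriminant factor but not the first factor $\hat s_{i_0}$ of the integrand, which need not be positive a priori; this is repaired by the same horizontal translation the paper performs in the proof of Theorem \ref{thm:A-F-convex}, under which $V$ is unchanged because the added term $\sum_{i=1}^{n}a_i\langle\nu,E_i\rangle/F(\nu)$ has vanishing $\tau[\cdot]$ and, by the symmetry of Lemma \ref{lemma:V12=V21}, contributes nothing when placed in the first slot.
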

As a consequence, we finally arrive Theorem \ref{thm:A-F-k}.
\begin{proof}[\textbf{Proof of Theorem \ref{thm:A-F-k}}] By choosing $k=m=n+1, K_0=\widehat{\Sigma}$ and $K_1=\widehat{\mathcal{C}}_{\omega_0}$ in Corollary \ref{cor:4.3}, together with the fact that $V_{(n+1), {\omega_0}}=\left|\widehat{\mathcal{C}}_{\omega_0}\right|$ and Lemma \ref{lemma:V-k=V(KKK,LLL)}, we obtain the desired result.
\end{proof}

\section*{Reference}
\begin{biblist}

\bib{And01}{article}{
	title={Volume-preserving anisotropic mean curvature flow},
	author={Andrews, B.},
	JOURNAL = {Indiana Univ. Math. J.},
	FJOURNAL = {Indiana University Mathematics Journal},
	volume={50},
	number={2},
	pages={783-827},
	year={2001},
}

\bib{Chang-Wang-Wu}{article}{
	author={Chang, K.},
	author={Wang, X.}
	author={Wu, X.},
	title={On the spectral theory of positive operators and PDE applications},
	journal={Discrete Contin. Dyn. Syst.},
	volume={40},
	date={2020},
	number={6},
	pages={3171--3200},
	issn={1078-0947},
	review={\MR{4097496}},
	doi={10.3934/dcds.2020054},
}

\bib{Deimling}{book}{
	author={Deimling, K.},
	title={Nonlinear functional analysis},
	publisher={Springer-Verlag, Berlin},
	date={1985},
	pages={xiv+450},
	isbn={3-540-13928-1},
	review={\MR{787404}},
	doi={10.1007/978-3-662-00547-7},
}

\bib{Philippis}{article}{
	author={De Philippis, G.},
	author={Maggi, F.},
	title={Regularity of free boundaries in anisotropic capillarity problems
		and the validity of Young's law},
	journal={Arch. Ration. Mech. Anal.},
	volume={216},
	date={2015},
	number={2},
	pages={473--568},
	issn={0003-9527},
	review={\MR{3317808}},
	doi={10.1007/s00205-014-0813-2},
}

\bib{Ding-Gao-Li-arxiv}{article}{
	title={Anisotropic mean curvature type flow and capillary Alexandrov-Fenchel inequalities},
	author={S. Ding},
    author={J. Gao},
    author={G. Li},
	year={2024},
	eprint={2408.10740},
	archivePrefix={arXiv},
	primaryClass={math.DG},
	url={https://arxiv.org/abs/2408.10740},
}

\bib{Ding-Li-JFA}{article}{
	author={Ding, S.},
	author={Li, G.},
	title={A class of curvature flows expanded by support function and
		curvature function in the Euclidean space and hyperbolic space},
	journal={J. Funct. Anal.},
	volume={282},
	date={2022},
	number={3},
	pages={Paper No. 109305, 38pp},
	issn={0022-1236},
	review={\MR{4339010}},
	doi={10.1016/j.jfa.2021.109305},
}

\bib{Du2006}{book}{
	author={Du, Y.},
	title={Order structure and topological methods in nonlinear partial
		differential equations. Vol. 1},
	series={Series in Partial Differential Equations and Applications},
	volume={2},
	note={Maximum principles and applications},
	publisher={World Scientific Publishing Co. Pte. Ltd., Hackensack, NJ},
	date={2006},
	pages={x+190},
	isbn={981-256-624-4},
	review={\MR{2205529}},
	doi={10.1142/9789812774446},
}

\bib{Finn}{book}{
	author={Finn, R.},
	title={Equilibrium capillary surfaces},
	series={Grundlehren der mathematischen Wissenschaften [Fundamental
		Principles of Mathematical Sciences]},
	volume={284},
	publisher={Springer-Verlag, New York},
	date={1986},
	pages={xvi+245},
	isbn={0-387-96174-7},
	review={\MR{816345}},
	doi={10.1007/978-1-4613-8584-4},
}

\bib{Arxiv}{article}{
	title={Generalized Minkowski formulas and rigidity results for anisotropic capillary hypersurfaces},
	author={Gao, J.},
	author={Li, G.},
	year={2024},
	eprint={2401.12137},
}

\bib{Gao-Li-JGA}{article}{
	author={Gao, J.},
	author={Li, G.},
	title={Anisotropic Alexandrov-Fenchel type inequalities and
		Hsiung-Minkowski formula},
	journal={J. Geom. Anal.},
	volume={34},
	date={2024},
	number={10},
	pages={Paper No. 312, 31},
	issn={1050-6926},
	review={\MR{4784920}},
	doi={10.1007/s12220-024-01759-7},
}

\bib{Ghomi}{article}{
	author={Ghomi, M.},
	title={Gauss map, topology, and convexity of hypersurfaces with
		nonvanishing curvature},
	journal={Topology},
	volume={41},
	date={2002},
	number={1},
	pages={107--117},
	issn={0040-9383},
	review={\MR{1871243}},
	doi={10.1016/S0040-9383(00)00028-8},
}

\bib{Guan-Li-09}{article}{
	author={Guan, P.},
	author={Li, J.},
	title={The quermassintegral inequalities for $k$-convex starshaped
		domains},
	journal={Adv. Math.},
	volume={221},
	date={2009},
	number={5},
	pages={1725--1732},
	issn={0001-8708},
	review={\MR{2522433}},
	doi={10.1016/j.aim.2009.03.005},
}
\bib{Hu-Wei-Yang-Zhou}{article}{
	author={Hu, Y.},
	author={Wei, Y.},
	author={Yang, B.},
	author={Zhou, T.},
	title={A complete family of Alexandrov-Fenchel inequalities for convex
		capillary hypersurfaces in the half-space},
	journal={Math. Ann.},
	volume={390},
	date={2024},
	number={2},
	pages={3039--3075},
	issn={0025-5831},
	review={\MR{4801847}},
	doi={10.1007/s00208-024-02841-9},
}
\bib{Jia-Wang-Xia-Zhang2023}{article}{
	AUTHOR = {Jia, X.},
	AUTHOR = {Wang, G.},
	AUTHOR = {Xia, C.},
	AUTHOR = {Zhang, X.},
	title={Alexandrov's theorem for anisotropic capillary hypersurfaces in the half-space},
	journal={Arch. Ration. Mech. Anal. },
	VOLUME = {247},
	YEAR = {2023},
	NUMBER = {2},
	PAGES = {19-25},
}
\bib{Lutwak1993}{article}{
	author={Lutwak, E.},
	title={The Brunn-Minkowski-Firey theory. I. Mixed volumes and the
		Minkowski problem},
	journal={J. Differential Geom.},
	volume={38},
	date={1993},
	number={1},
	pages={131--150},
	issn={0022-040X},
	review={\MR{1231704}},
}

\bib{Le1}{article}{
	author={Lewy, H.},
	title={On the existence of a closed convex surface realizing a given Riemannian metric},
	journal={Proc. Nat. Acad. Sci. USA},
	volume={24},
	date={1938},
	number={2},
	pages={104–106},
}
\bib{Makuicheng}{article}{
	author={Ma, K.},
	title={Locally constrained inverse curvature flow and an
		Alexandrov-Fenchel inequality in de Sitter space},
	journal={J. Math. Anal. Appl.},
	volume={527},
	date={2023},
	number={2},
	pages={Paper No. 127439, 17},
	issn={0022-247X},
	review={\MR{4597986}},
	doi={10.1016/j.jmaa.2023.127439},
}

\bib{McCoy05}{article}{
	author={McCoy, J.},
	title={Mixed volume preserving curvature flows},
	journal={Calc. Var. Partial Differential Equations},
	volume={24},
	date={2005},
	number={2},
	pages={131--154},
	issn={0944-2669},
	review={\MR{2164924}},
	doi={10.1007/s00526-004-0316-3},
}
\bib{Mei-Wang-Weng-2024-MCF}{article}{
	author={Mei, X.},
	author={Wang, G.},
	author={Weng, L.},
	title={A constrained mean curvature flow and Alexandrov-Fenchel
		inequalities},
	journal={Int. Math. Res. Not. IMRN},
	date={2024},
	number={1},
	pages={152--174},
	issn={1073-7928},
	review={\MR{4686648}},
	doi={10.1093/imrn/rnad020},
}

\bib{Xia-arxiv}{article}{
		author={X. Mei},
	author={G. Wang},
	author={L. Weng},
	author={C. Xia},
    title={Alexandrov-Fenchel inequalities for convex hypersurfaces in the half-space with capillary boundary. II},
    journal={Math. Z.},
    volume={310},
	year={2025},
	number={4},
	pages={Paper No. 71.},
	}

\bib{book-convex-body}{book}{
	author={Schneider, R.},
	title={Convex bodies: the Brunn-Minkowski theory},
	series={Encyclopedia of Mathematics and its Applications},
	volume={151},
	edition={Second expanded edition},
	publisher={Cambridge University Press, Cambridge},
	date={2014},
	pages={xxii+736},
	isbn={978-1-107-60101-7},
	review={\MR{3155183}},
}
\bib{Shenfeld19}{article}{
	author={Shenfeld, Y.},
	author={van Handel, R.},
	title={Mixed volumes and the Bochner method},
	journal={Proc. Amer. Math. Soc.},
	volume={147},
	date={2019},
	number={12},
	pages={5385--5402},
	issn={0002-9939},
	review={\MR{4021097}},
	doi={10.1090/proc/14651},
}
\bib{Shenfeld22}{article}{
	author={Shenfeld, Y.},
	author={van Handel, R.},
	title={The extremals of Minkowski's quadratic inequality},
	journal={Duke Math. J.},
	volume={171},
	date={2022},
	number={4},
	pages={957--1027},
	issn={0012-7094},
	review={\MR{4393790}},
	doi={10.1215/00127094-2021-0033},
}

\bib{Shenfeld23}{article}{
	author={Shenfeld, Y.},
	author={van Handel, R.},
	title={The extremals of the Alexandrov-Fenchel inequality for convex
		polytopes},
	journal={Acta Math.},
	volume={231},
	date={2023},
	number={1},
	pages={89--204},
	issn={0001-5962},
	review={\MR{4652411}},
	doi={10.4310/acta.2023.v231.n1.a3},
}

\bib{Wang-Weng-Xia}{article}{
	TITLE = {Alexandrov-{F}enchel inequalities for convex hypersurfaces in
		the half-space with capillary boundary},
	JOURNAL = {Math. Ann.},
	FJOURNAL = {Mathematische Annalen},
	VOLUME = {388},
	YEAR = {2024},
	NUMBER = {2},
	PAGES = {2121--2154},
	ISSN = {0025-5831},
	MRCLASS = {53E40 (35K96 53C21 53C24)},
	MRNUMBER = {4700391},
	DOI = {10.1007/s00208-023-02571-4},
	URL = {https://doi.org/10.1007/s00208-023-02571-4},
	author={Wang, G.},
	author={Weng, L.},
	author={Xia, C.},
}

\bib{Wang-Weng-Xia-2024-inverseMC}{article}{
	author={Wang, G.},
	author={Weng, L.},
	author={Xia, C.},
	title={A Minkowski-type inequality for capillary hypersurfaces in a
		half-space},
	journal={J. Funct. Anal.},
	volume={287},
	date={2024},
	number={4},
	pages={Paper No. 110496, 22},
	issn={0022-1236},
	review={\MR{4746980}},
	doi={10.1016/j.jfa.2024.110496},
}

\bib{Wei-Xiong-21}{article}{
	author={Wei, Y.},
	author={Xiong, C.},
	title={A volume-preserving anisotropic mean curvature type flow},
	journal={Indiana Univ. Math. J.},
	volume={70},
	date={2021},
	number={3},
	pages={881--905},
	issn={0022-2518},
	review={\MR{4284100}},
	doi={10.1512/iumj.2021.70.8337},
}

\bib{Wei-Xiong-22}{article}{
	author={Wei, Y.},
	author={Xiong, C.},
	title={A fully nonlinear locally constrained anisotropic curvature flow},
	journal={Nonlinear Anal.},
	volume={217},
	date={2022},
	pages={Paper No. 112760, 29},
	issn={0362-546X},
	review={\MR{4361845}},
	doi={10.1016/j.na.2021.112760},
}

\bib{Weng-Xia-22-ball}{article}{
	author={Weng, L.},
	author={Xia, C.},
	title={Alexandrov-Fenchel inequality for convex hypersurfaces with
		capillary boundary in a ball},
	journal={Trans. Amer. Math. Soc.},
	volume={375},
	date={2022},
	number={12},
	pages={8851--8883},
	issn={0002-9947},
	review={\MR{4504655}},
	doi={10.1090/tran/8756},
}

\bib{Xia13}{article}{
	author={Xia, C.},
	title={On an anisotropic Minkowski problem},
	journal={Indiana Univ. Math. J.},
	volume={62},
	number={5},
	pages={1399–1430},
	year={2013},	
}

\bib{Xia-phd}{book}{
	AUTHOR = {Xia, C.},
	TITLE = {On a Class of Anisotropic Problem},
	SERIES = {PhD Thesis},
	PUBLISHER = {Albert-Ludwigs University Freiburg},
	YEAR = {2012},
}


\bib{Xia-2017-convex}{article}{
	AUTHOR = {Xia, C.},
	TITLE = {Inverse anisotropic curvature flow from convex hypersurfaces},
	JOURNAL = {J. Geom. Anal.},
	FJOURNAL = {Journal of Geometric Analysis},
	VOLUME = {27},
	YEAR = {2017},
	NUMBER = {3},
	PAGES = {2131--2154},
	ISSN = {1050-6926},
	MRCLASS = {53C44 (52A20)},
	MRNUMBER = {3667425},
	MRREVIEWER = {Alina Stancu},
	DOI = {10.1007/s12220-016-9755-2},
	URL = {https://doi.org/10.1007/s12220-016-9755-2},
}

\bib{Xia2017}{article}{
	title={Inverse anisotropic mean curvature flow and a Minkowski type inequality},
	author={Xia, C.},
	JOURNAL = {Adv. Math.},
	FJOURNAL = {Advances in Mathematics},
	volume={315},
	pages={102-129},
	year={2017},
}

\bib{Young}{article}{
author={Young, T.},
title={An essay on the cohesion of fluids},
journal={Philos. Trans. Roy. Soc. London},
volume={95},
pages={65–87},
year={1805},
}

\end{biblist}

\end{document}